\numberwithin{equation}{section}
\newcommand{\R}{\mathbb{R}}
\renewcommand{\theequation}{\arabic{section}.\arabic{equation}}
\newtheorem{Thm}{Theorem}[section]
\newtheorem{Lem}[Thm]{Lemma}
\newtheorem{Prop}[Thm]{Proposition}
\begin{document}

\title[Excited states on BEC problem]
{Excited states on Bose-Einstein condensates with attractive interactions}
\author[P. Luo, S. Peng, S. Yan]{Peng Luo, Shuangjie Peng and Shusen Yan}

\address[Peng Luo]{School of Mathematics and Statistics, Central China Normal University, Wuhan 430079, China}
\email{pluo@mail.ccnu.edu.cn}

\address[Shuangjie Peng]{School of Mathematics and Statistics and Hubei Key Laboratory of Mathematical Sciences, Central China Normal University, Wuhan 430079, China}
\email{sjpeng@mail.ccnu.edu.cn}

\address[Shusen Yan]{School of Mathematics and Statistics, Central China Normal University, Wuhan 430079, China}
\email{syan@mail.ccnu.edu.cn}

\thanks{Luo and Peng were supported  by the Key Project of NSFC (No.11831009). Luo  was partially supported by NSFC grants (No.11701204).
 Yan was supported by  NSFC grants (No.11629101).}


\date{\today}

\begin{abstract}
We study the Bose-Einstein condensates (BEC) in two or three dimensions  with attractive interactions, described by $L^{2}$ constraint Gross-Pitaevskii energy functional.
First, we give the precise description of  the chemical potential of the condensate $\mu$ and the attractive interaction $a$. Next, for a class of degenerated trapping potential with non-isolated critical points,
 we obtain the existence and the local uniqueness of excited states  by precise analysis of the concentrated points and the Lagrange multiplier.

To our best knowledge, this is the first result concerning on excited states of BEC in Mathematics.
 Also, our results show that $ka_*$ are critical values in  two dimension when the concentration occurs for any positive integer $k$ with some positive constant $a_*$. And we point out that our results on degenerated trapping potential with non-isolated critical points are also new even for the classical Schr\"odinger equations.
 Here our main tools are  finite-dimensional reduction  and
  various Pohozave identities. The main difficulties come from the
  estimates on Lagrange multiplier and  the different degenerate rate along different directions at the critical points of $V(x)$.
\end{abstract}

\maketitle

{\small
\keywords {\noindent {\bf Keywords:} {Bose-Einstein condensates, Excited states, Degenerated trapping potential}
\smallskip
\newline
\subjclass{\noindent {\bf 2010 Mathematics Subject Classification:} 35A01 $\cdot$ 35B25 $\cdot$ 35J20 $\cdot$ 35J60}
}

\section{Introduction}

\setcounter{equation}{0}

The idea of Bose-Einstein condensates (BEC) originated in 1924-1925, when  Einstein predicted that, below a critical temperature, part of the bosons would occupy the
same quantum state to form a condensate.  Over the last two decades,
remarkable experiments on BEC in dilute gases of alkali atoms  \cite{Anderson,Bloch,Davis}  have revealed various interesting quantum phenomena. These new experimental advances make  many mathematicians  study again  the following of Gross-Pitaevskii (GP) equations proposed by Gross \cite{Gross} and Pitaevskii \cite{Pitaevskii} in the
 1960s:
\begin{equation}\label{1-22-5}
i \partial_t \psi(x, t)= -\Delta \psi(x, t) +V(x) \psi(x, t) -a |\psi(x, t)|^2\psi(x, t),\quad x\in \mathbb R^N,
\end{equation}
with the constraint
 \begin{equation*}
\int_{\mathbb R^N} |\psi(x, t)|^2\,dx=1,
\end{equation*}
where $N=2, 3$, $V(x)\ge 0$ is a real-valued potential and $a\in \mathbb R$  is treated as an arbitrary
dimensionless parameter. For better
understanding of the long history and further results on Bose-Einstein condensates,  we refer to \cite{Cornell,Lieb1,Lieb2,Ketterle} and   the references therein.

If we want to find a solution for \eqref{1-22-5} of the form $
\psi(x, t)= u(x) e^{-i \mu t}$,
where $\mu$ represents  the chemical potential of the condensate and $u(x)$ is a function independent
of time, then the unknown pair $(\mu, u)$ satisfies the following nonlinear eigenvalue equation
\begin{equation}\label{8-18-1}
-\Delta u +V(x) u = a u^3 +\mu u,\quad \text{in}\;\mathbb R^N,
\end{equation}
and the following constraint
\begin{equation}\label{8-28-3}
\int_{\mathbb R^N} u^2=1.
\end{equation}
The   energy functional corresponding to \eqref{8-18-1} is given by
\begin{equation}\label{5-6-1}
J(u) = \int_{\mathbb R^N}  \bigl( |\nabla u|^2 +V(x) u^2 \bigr) -\frac a4 \int_{\mathbb R^N} u^4.
\end{equation}
A ground state solution of \eqref{8-18-1} is a minimizer of the following minimization problem:
\begin{equation}\label{1-7-11}
I_a:= \inf\Bigl\{ \frac12 \int_{\mathbb R^N} \bigl( |\nabla u|^2 + V(x) u^2 -\frac a4 \int_{\mathbb R^N}  u^4:\; u\in H^1(\mathbb R^N),\; \int_{\mathbb R^N} u^2=1\Bigr\}.
\end{equation}
Any eigenfunction of \eqref{8-18-1}  whose energy is larger than that of
the ground state is usually called the excited states in the physics literatures in \cite{Bao}.

Let  us first recall the existence result for the ground state.
 Denote by  $Q(x)$   the unique positive solution of
$
-\Delta u+u=u^3, ~u\in H^1(\R^N)$ with $N=2,3$.  Let
$a_*=\displaystyle\int_{\R^N}Q^2$.  We have

\noindent {\bf Theorem~A.}(c.f.\cite{Bao})  { \it Suppose $V (x)\ge 0 \; (x \in \mathbb R^N) $ satisfies
\[
\lim_{|x|\to +\infty} V(x)=+\infty.
\]
If (i)  $a<a_*$ and $N=2$; or  (ii)  $a\le 0$ and $N=3$,
then  \eqref{8-18-1}--\eqref{8-28-3} has a ground state.  On the other hand,  \eqref{8-18-1}--\eqref{8-28-3} has no ground state if
 (i')  $a\ge a_*$ and $N=2$; or (ii') $a> 0$ and $N=3$.}

\medskip

In the last few years, lots of efforts have been made to the study of asymptotic behaviors of the minimizers of \eqref{1-7-11} as $a\nearrow a_*$ when $N=2$. See for example
\cite{GS,Guo,Guo1} and the references therein, where the main tools used are the energy comparison. The main results on the  asymptotic behaviors of the minimizer $u_a$
of \eqref{1-7-11} with $N=2$ as  $a\nearrow a_*$ are that $u_a$ concentrates at a minimum point $x_0$ of $V(x)$.  That is, $u_a \to 0$ uniformly in $\mathbb R^N \setminus B_\theta(x_0)$
for any $\theta>0$, while $\max_{x\in B_\theta(x_0)} u_a(x)\to +\infty$.  However, if $N=3$, as $a  \nearrow  0$,  the minimizer $u_a$ of \eqref{1-7-11} approaches to
a minimizer of $u_0$ of $I_0$. Therefore, it is not obvious that  \eqref{8-18-1}--\eqref{8-28-3} has solutions $u_a$ concentrating at some points if $N=3$.

The aim of this paper is to investigate the excited states for \eqref{8-18-1}--\eqref{8-28-3}, especially those which exhibit the concentration phenomena.
For this purpose, we need to consider \eqref{8-18-1} from different point of views  as follows.

We first consider the following problem without constraint:
\begin{equation}\label{1-23-5}
\begin{cases}
-\Delta w + (\lambda + V(x) ) w = w^3, & \text{in}\; \mathbb R^N;\vspace{2mm}\\
w\in H^1(\mathbb R^N),
\end{cases}
\end{equation}
where $\lambda>0$ is a large parameter.  It is well known that for large $\lambda>0$, we can construct various positive solutions concentrating at
some stable critical points of $V(x)$.  In particular, we can construct positive $k$-peak solutions for \eqref{1-23-5} in the sense that
\begin{equation}\label{2-23-5}
w_\lambda (x)=  \sqrt\lambda  \Bigl(\sum^k_{i=1} Q\bigl( \sqrt\lambda(x-x_{\lambda,i})\bigr)+\omega_\lambda (x)\Bigr),
\end{equation}
with  $\displaystyle\int_{\R^N}\big[\frac{1}{\lambda}|\nabla \omega_\lambda|^2+  \omega^2_\lambda\big]=o\big(\lambda^{-\frac{N}{2}}\big)$.
Let $
u_\lambda= \frac { w_\lambda} {\bigl(\int_{\mathbb R^N} w_\lambda^2\bigr)^{1/2}}$.
Then $\displaystyle\int_{\mathbb R^N} u_\lambda^2=1$, and
\begin{equation}\label{3-23-5}
\begin{cases}
-\Delta u_\lambda + (\lambda + V(x) ) u_\lambda = a_\lambda u_\lambda^3, & \text{in}\; \mathbb R^N;\vspace{2mm}\\
u_\lambda\in H^1(\mathbb R^N),
\end{cases}
\end{equation}
with
\[
a_\lambda= \int_{\mathbb R^N} w_\lambda^2 =k\lambda^{1-\frac N2} \Bigl( \int_{\mathbb R^N} Q^2 +o(1)\Bigr)
=k\lambda^{1-\frac N2} \Bigl(a_* +o(1)\Bigr).
\]
Note that $a_\lambda>0$, and as $\lambda \to +\infty$, $a_\lambda\to ka_*$ if $N=2$, while $a_\lambda\to 0$ if $N=3$.  Therefore, we obtain
a concentrated solution with $k$ peaks for \eqref{8-18-1}--\eqref{8-28-3} with $\mu=-\lambda$ and suitable $a_\lambda$.  Now the crucial
question is for any $a$ close to $ka_*$ if $N=2$, or for any $a>0$ small if $N=3$, whether we can choose a suitable large $\lambda_a>0$, such that
\eqref{8-18-1}--\eqref{8-28-3} holds with
\begin{equation}\label{10-23-5}
\mu=-\lambda_a,\quad u_a= \frac { w_{\lambda_a}} {\bigl(\displaystyle\int_{\mathbb R^N} w_{\lambda_a}^2\bigr)^{1/2}}.
\end{equation}

The above discussions show that the existence of concentrated solutions for \eqref{8-18-1}--\eqref{8-28-3} is closely related to the existence of peaked solutions
for the nonlinear Schr\"odinger equations \eqref{1-23-5}.
In this paper, we will mainly investigate   concentrated  solutions $u_a$  of  \eqref{8-18-1}--\eqref{8-28-3} in the  sense that
$$\max_{x\in B_\theta(b_{i})} u(x) \to +\infty,~\mbox{while}~u_a(x)\to 0~\mbox{uniformly in}~\mathbb R^N\setminus \bigcup_{i=1}^k  B_\theta(b_{i}), ~\mbox{for any}~\theta>0,$$
as $a \to a_0$, where $k$ is a positive integer and $b_{1},\cdots,b_k$ are some points in $\mathbb R^N$.
Here, we will study the following basic
issues concerning the concentration of the solutions for \eqref{8-18-1}--\eqref{8-28-3}:

 \smallskip

\noindent\emph{\textup{(I)} The possible values for $a_0$ and $\mu_a$, and the exact location of  the concentrated points, if $u_a$ concentrates.}

 \smallskip

\noindent\emph{\textup{(II)} The existence of the concentrated solutions.}

 \smallskip

\noindent\emph{\textup{(III)}  The local uniqueness of the concentrated solutions.}


 \smallskip

  Our first result of this paper is
the following.

\begin{Thm}\label{th1-24-5}
Suppose  $u_a$ is a   solution of \eqref{8-18-1}--\eqref{8-28-3} concentrated at some points
as $a\to a_0$.  Then it holds
\begin{equation*}
a_0\ge 0~\mbox{and}~\mu_a\to -\infty,~\mbox{as}~a\to a_0.
\end{equation*}
 Moreover,  if $N=2$, then   $a_0= k a_*$ for some integer $k>0$,  and $u_a$ satisfies
\begin{equation}\label{20-23-5}
u_a (x)=  \sqrt{\frac{-\mu_a}{a}} \Bigl(\sum_{i=1}^k  Q\bigl( \sqrt {-\mu_a}(x-x_{a, i})\bigr)+\omega_a(x)\Bigr),
\end{equation}
with $\displaystyle\int_{\R^2}\big[-\frac{1}{\mu_a}|\nabla \omega_a|^2+\omega_a^2\big]=o\big(-\frac{1}{\mu_a}\big)$ and  some points $x_{a,i}\in \mathbb R^2$,
$i=1,\cdots,k$, satisfying  $\sqrt {-\mu_a}| x_{a, j}-x_{a, i}|  \to \infty$ as $a\to a_0$, $ i\ne j$.

\end{Thm}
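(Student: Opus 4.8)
The plan is to exploit the variational characterization that makes $u_a$, after rescaling, close to a sum of copies of $Q$, and then to upgrade this heuristic into the quantitative statement by combining energy estimates with Pohozaev-type identities. First I would set $\lambda_a := -\mu_a$ and write $u_a(x) = \sqrt{\lambda_a}\, v_a(\sqrt{\lambda_a}\, x + \text{shift})$ so that, near a concentration point $b_i$, the rescaled function $v_a$ satisfies an equation formally converging to $-\Delta v + v = \frac{a}{\lambda_a^{?}} v^3$. The first task is therefore to pin down the blow-up rate: by testing \eqref{8-18-1} against $u_a$ and using the constraint \eqref{8-28-3} together with the concentration hypothesis (so that $\int u_a^4 \to \infty$ in $2$D), one sees that $\int u_a^4$ and $\mu_a$ must be comparable in a precise way. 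Showing $\mu_a \to -\infty$ is the crucial first step: if $\mu_a$ stayed bounded (or $\lambda_a$ bounded), the equation $-\Delta u_a + (\lambda_a + V)u_a = a u_a^3$ with $\|u_a\|_2 = 1$ would force $\|u_a\|_\infty$ bounded by elliptic estimates (using $V \ge 0$ and $L^2$-boundedness to bound $\|u_a\|_{H^1}$, then bootstrap), contradicting concentration. This also immediately yields $a_0 \ge 0$, since a concentrating solution with $a \le 0$ would make the right-hand side have the wrong sign to sustain a spike — more precisely, testing against $u_a$ gives $\int|\nabla u_a|^2 + \int V u_a^2 + \lambda_a = a\int u_a^4$, and the left side is positive and (because of concentration of the gradient term) large, forcing $a > 0$ for large $a$ near $a_0$, hence $a_0 \ge 0$.

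Next, for $N = 2$, I would carry out the rescaling with the correct power: set $\lambda_a = -\mu_a \to +\infty$ and consider $\tilde u_a(y) = \frac{1}{\sqrt{\lambda_a}} u_a\!\left(\frac{y}{\sqrt{\lambda_a}} + x_{a,1}\right)$ around the highest spike. Dividing the equation by $\lambda_a^{3/2}$ and using $\mu_a = -\lambda_a$ shows that $\tilde u_a$ solves $-\Delta \tilde u_a + (1 + \lambda_a^{-1}V)\tilde u_a = a\, \tilde u_a^3$ (the nonlinear term keeps coefficient $a$ because in two dimensions the cubic scales exactly like the Laplacian). A standard concentration-compactness / elliptic-regularity argument then gives $\tilde u_a \to \sqrt{\tfrac{1}{a_0}}\, Q$ locally, since $\frac{1}{\sqrt{a_0}}Q$ is, up to translation, the unique positive solution of $-\Delta w + w = a_0 w^3$. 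Because the mass is conserved, $1 = \int u_a^2 = \int \tilde u_a^2 \to \frac{1}{a_0}\int Q^2$ summed over the $k$ bubbles — and here one must rule out mass escaping to infinity or forming additional bubbles of different size, using again that $V(x) \to \infty$ traps the mass and that all spikes must live at roughly the same scale $\lambda_a^{-1/2}$ (otherwise the smaller-scale bubble would violate its own limiting equation or carry zero mass). This forces $1 = \frac{k}{a_0}\int_{\R^2} Q^2 = \frac{k a_*}{a_0}$, i.e. $a_0 = k a_*$, and simultaneously produces the profile decomposition \eqref{20-23-5} with the stated smallness of the remainder $\omega_a$ in the scaled $H^1$ norm; the separation $\sqrt{-\mu_a}|x_{a,i} - x_{a,j}| \to \infty$ follows because two bubbles at bounded rescaled distance would merge into a single profile that cannot solve the limiting equation.

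The main obstacle I anticipate is the bookkeeping that simultaneously (a) fixes the blow-up rate to be exactly $\lambda_a^{-1/2}$ at \emph{every} concentration point, (b) shows the number of bubbles is finite and each carries mass exactly $a_*/a_0$ with \emph{no} mass lost, and (c) gets the sharp remainder estimate $\int_{\R^2}[-\mu_a^{-1}|\nabla\omega_a|^2 + \omega_a^2] = o(-\mu_a^{-1})$ rather than merely $O$. Step (b) is where the quantization $a_0 = ka_*$ really comes from and is delicate precisely because a priori the spikes could a priori have different heights; I would handle this by a careful vanishing/dichotomy argument on the rescaled mass densities, using the nondegeneracy and uniqueness of $Q$ to exclude any limit profile other than a (positive) multiple of $Q$, and using $V \ge 0$ together with $V \to \infty$ to confine all the mass. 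For the sharp remainder bound in (c), one subtracts off the $k$ bubbles and tests the resulting equation for $\omega_a$ against itself, absorbing the cross terms via the exponential decay of $Q$ and the separation of the centers; the error terms involving $V$ are controlled because $V$ is bounded near each (bounded) concentration point while $\lambda_a^{-1}$ provides the gain.
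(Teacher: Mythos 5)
Your outline for the $N=2$ quantization is essentially the paper's argument: rescale by $\lambda_a=-\mu_a$, get $-\Delta \bar u_a+(1+\lambda_a^{-1}V(\cdot/\sqrt{\lambda_a}))\bar u_a=a\bar u_a^3$ with unit $L^2$ mass, run a blow-up/profile decomposition using the uniqueness of $Q$, and read off $a_0=ka_*$ from mass conservation; your derivation of $a>0$ from testing the equation against $u_a$ (once $\lambda_a\to+\infty$ is known) is a harmless variant of the paper's maximum-point argument. So most of the proposal is fine and matches the paper's route.

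The genuine gap is in the first step, the proof that $\mu_a\to-\infty$. You only treat two of the three scenarios: $\mu_a$ bounded (excluded by Moser iteration, as in the paper) and, implicitly, $\mu_a\to-\infty$ (the desired conclusion). You never rule out $\mu_a\to+\infty$ along a subsequence, and your proposed mechanism does not cover it: when $\mu_a\to+\infty$ the equation reads $-\Delta u_a=(\mu_a-V+au_a^2)u_a$ with a zeroth-order coefficient that is large and \emph{positive} on any fixed annulus away from the concentration points, so the elliptic bootstrap from $\|u_a\|_{L^2}=1$ produces an $L^\infty$ bound that degenerates as $\mu_a\to+\infty$, and no contradiction with concentration follows. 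The paper needs a separate argument here (its Proposition~\ref{th1-26-10}): setting $V_1=\mu_a-V+au_a^2$, which exceeds $1$ on a large annulus because $u_a\to0$ uniformly there, one shows that the positive solution $u_a$ of $-\Delta u=V_1u$ cannot coexist with the sign-changing radial solution of $-\Delta v=v$, by integrating by parts over the annuli between consecutive zeros of $v$. Some such spectral/oscillation argument (the point being that $-\Delta-V_1$ with $V_1>\lambda_1$ of a large annulus admits no positive supersolution there) must be added to your proof; without it the claim $\mu_a\to-\infty$, on which everything else rests, is not established.
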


Throughout this paper, we call $u_a$  a $k$-peak solution of \eqref{8-18-1}--\eqref{8-28-3}
if $u_a$ satisfies \eqref{20-23-5}.
Although the  nonlinear Schr\"odinger equations \eqref{1-23-5} have been extensively studied in the last
 three decades (see for example \cite{Ambrosetti,Byeon,Cao4,Floer,Rabinowitz} and the references therein), not much is known for the exact location of the concentrated point, nor for the local uniqueness
of the solutions,  if the critical points of $V(x)$ is not isolated.
In the paper, we assume that $V(x)$ obtains its local minimum or local maximum at
 $\Gamma_i$ ($i=1,\cdots,k$) and $\Gamma_i$ is a closed $N-1$ dimensional hyper-surface satisfying
 $\Gamma_i \bigcap \Gamma_j=\emptyset$ for $i\neq j$.
  More precisely, we assume that the following conditions hold.

\medskip

\noindent\emph{\textup{\textbf{($V$).}} There exist $\delta>0$  and some $C^2$ compact  hypersurfaces $\Gamma_i~(i=1,\cdots,k)$  without boundary,  satisfying
$$
V(x)=V_i, \;\;  \frac{\partial V(x)}{\partial \nu_i}=0,\;\; \frac{\partial^2 V(x)}{\partial \nu_i^2}\ne 0, ~\mbox{for any}~x \in \Gamma_i~~\mbox{and}~i=1,\cdots,k,$$
where $V_i\in \R$, $\nu_i$ is the unit outward normal of  $\Gamma_i$  at $x\in \Gamma_i$.   Moreover, $V\in C^4\big(\bigcup^k_{i=1}W_{\delta,i}\big)$ and $V(x)=O\big(e^{\alpha|x|})$ for some $\alpha \in (0, 2)$.
Here we denote  $W_{\delta,i}:=\{x\in \R^N, dist(x,\Gamma_i)<\delta\}$.
}

\smallskip

Note that condition $(V)$ implies that $V(x)$ obtains its local minimum or local maximum on the hypersurface $\Gamma_i$ for $i=1,\cdots,k$. It is also easy
 to see that if $\delta>0$ is small,  the  set $\Gamma_{t,i}=\bigl\{  x:  V(x)= t\bigr\}\bigcap  W_{\delta,i}$ consists of two   compact  hypersurfaces
in $\mathbb R^N$  without boundary for $t\in [V_i, V_i+\theta]$
( or $ t\in [V_i-\theta, V_i]$) provided $\theta>0$ is small. Moreover,   the outward unit normal vector $\nu_{t,i}(x)$ and   the $j$-th principal tangential unit vector $\tau_{t,i,j}(x)$  ($j=1, \cdots, N-1$) of  $\Gamma_{t,i}$  at
$x$ are Lip-continuous   in $W_{\delta,i}$.

Using the local Pohozaev identities, we can easily prove that a $k$-peak solution of \eqref{8-18-1}--\eqref{8-28-3} must concentrate at some critical points of $V(x)$, and we can also refer to
\cite{Grossi}.
If $V(x)$ satisfies $(V)$  and the concentrated points belong to $\Gamma:=\bigcup^k_{i=1}\Gamma_i$,  it is natural to ask where the concentrated points locate on $\Gamma$.
And  the following result gives the further answer of this question.
\begin{Thm}\label{nth1.1}
Under the condition \textup{($V$)}, if $u_a$ is a $k$-peak solution of \eqref{8-18-1}--\eqref{8-28-3}, concentrating at $\{b_1,\cdots,b_k\}$ with $b_i\in \Gamma$  and
$b_i\ne b_j$ if $i\ne j$,
as $a\to ka_*$ and  $N=2$,  or $a\searrow 0$ and $N=3$, then
\begin{equation}\label{1.6}
(D_{\tau_{i,j}} \Delta  V)(b_i)=0,~\mbox{with}~
 i=1,\cdots,k~\mbox{and}~j=1,\cdots,N-1.
\end{equation}
where $\tau_{i,j}$ is the $j$-th  principal tangential unit vector of $\Gamma$ at $b_i$.
\end{Thm}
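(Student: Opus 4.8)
The plan is to derive a family of \emph{local Pohozaev identities} centred at the concentration points and to confront them with the geometry of $V$ near $\Gamma$ coming from $(V)$. Write $\lambda_a:=-\mu_a\to+\infty$ (Theorem~\ref{th1-24-5}), so that $-\Delta u_a+(\lambda_a+V)u_a=au_a^3$, and fix a small $\rho>0$ (smaller than $\tfrac13\min_{i\ne j}|b_i-b_j|$ and than the tubular radius $\delta$ in $(V)$). Multiplying the equation by $\partial_{x_l}u_a$ and integrating over $B_\rho(x_{a,i})$ yields, for $i=1,\dots,k$ and $l=1,\dots,N$,
\begin{equation}\label{propose-poh}
\tfrac12\int_{B_\rho(x_{a,i})}\frac{\partial V}{\partial x_l}\,u_a^2
=\int_{\partial B_\rho(x_{a,i})}\!\Bigl(\tfrac12|\nabla u_a|^2\nu_l-\frac{\partial u_a}{\partial\nu}\frac{\partial u_a}{\partial x_l}+\tfrac12(\lambda_a+V)u_a^2\nu_l-\tfrac a4 u_a^4\nu_l\Bigr)d\sigma ,
\end{equation}
where $\nu$ is the outer unit normal to $\partial B_\rho(x_{a,i})$. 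By \eqref{20-23-5} and the exponential decay of $Q$, both $u_a$ and $|\nabla u_a|$ are $O(e^{-\frac12\rho\sqrt{\lambda_a}})$ on $\partial B_\rho(x_{a,i})$ while the other peaks sit at a fixed distance, so the whole right--hand side of \eqref{propose-poh} is $O(e^{-c\sqrt{\lambda_a}})$ for some $c=c(\rho)>0$.

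Next I expand the left--hand side. Inserting \eqref{20-23-5}, Taylor--expanding $\partial_{x_l}V$ at $x_{a,i}$, rescaling $y=\sqrt{\lambda_a}\,(x-x_{a,i})$, and using that $Q$ is radial --- so $\int_{\R^N}y\,Q^2=0$ and $\int_{\R^N}y_m y_n Q^2=\tfrac{\delta_{mn}}{N}\int_{\R^N}|y|^2Q^2$ --- a direct computation turns \eqref{propose-poh}, after dividing by $\lambda_a^{1-N/2}/a$, into the vector relation
\begin{equation}\label{propose-key}
a_*\,\nabla V(x_{a,i})+\frac{C_*}{\lambda_a}\,\nabla(\Delta V)(x_{a,i})+D^2V(x_{a,i})\,\widehat\gamma_{a,i}=o\bigl(\lambda_a^{-1}\bigr),
\end{equation}
where $C_*=\tfrac1{2N}\int_{\R^N}|y|^2Q^2>0$, $\widehat\gamma_{a,i}$ is proportional to the first moment $\int_{B_\rho(x_{a,i})}(x-x_{a,i})\,u_a^2$ (which vanishes to leading order by symmetry), and the exponentially small boundary term, the quartic Taylor remainder, and the contribution of $\omega_a$ to the moments $\int_{B_\rho(x_{a,i})}(x-x_{a,i})^{\otimes\alpha}u_a^2$ ($|\alpha|\le2$) have all been absorbed into the $o(\lambda_a^{-1})$. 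Establishing this absorption --- in particular $|\widehat\gamma_{a,i}|=o(\lambda_a^{-1/2})$ and that the $\omega_a$--moments are $o(\lambda_a^{-1})$ --- requires upgrading the energy smallness of $\omega_a$ in \eqref{20-23-5} to pointwise and exponentially decaying bounds, via the linear elliptic equation $\omega_a$ solves near $x_{a,i}$ (coercive principal part $-\Delta+\lambda_a-3\lambda_a Q^2(\sqrt{\lambda_a}(\cdot-x_{a,i}))$, right--hand side controlled by $V\,Q(\sqrt{\lambda_a}(\cdot-x_{a,i}))$) together with elliptic regularity and a comparison argument.

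Now I bring in the geometry. Since $x_{a,i}\to b_i\in\Gamma$, the concentration points are critical points of $V$ (the elementary Pohozaev consequence quoted just before Theorem~\ref{nth1.1}); hence $x_{a,i}$ eventually lies in $W_{\delta,i}$, where the Lipschitz frame $\{\nu_i,\tau_{i,1},\dots,\tau_{i,N-1}\}$ of the level sets is available. By $(V)$ one has $\nabla V\equiv0$ and $D^2V=(\partial^2_{\nu_i}V)\,\nu_i\otimes\nu_i$ on $\Gamma_i$, with $|\partial^2_{\nu_i}V|\ge c_0>0$; Taylor--expanding $\nabla V$ and $D^2V$ from the foot point of $x_{a,i}$ on $\Gamma_i$ gives, with $d_i=d_i(x_{a,i})$ the signed distance to $\Gamma_i$, the estimates $\nabla V(x_{a,i})=(\partial^2_{\nu_i}V)\,d_i\,\nu_i(x_{a,i})+O(d_i^2)$, $D^2V(x_{a,i})\nu_i(x_{a,i})=(\partial^2_{\nu_i}V)\nu_i(x_{a,i})+O(d_i)$ and $D^2V(x_{a,i})\tau_{i,j}(x_{a,i})=O(d_i)$, so in particular $|\nabla V(x_{a,i})|\ge\tfrac{c_0}{2}|d_i|$. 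Taking the $\nu_i(x_{a,i})$--component of \eqref{propose-key} and using $\nu_i\cdot\tau_{i,j}=0$ forces $|d_i|=o(\lambda_a^{-1/2})$; then $\tau_{i,j}(x_{a,i})\cdot\nabla V(x_{a,i})=O(d_i^2)=o(\lambda_a^{-1})$ and $\tau_{i,j}(x_{a,i})\cdot D^2V(x_{a,i})\widehat\gamma_{a,i}=O(|d_i|\,|\widehat\gamma_{a,i}|)=o(\lambda_a^{-1})$, so the $\tau_{i,j}(x_{a,i})$--component of \eqref{propose-key} reduces to $\tfrac{C_*}{\lambda_a}\,\tau_{i,j}(x_{a,i})\cdot\nabla(\Delta V)(x_{a,i})=o(\lambda_a^{-1})$, i.e. $\tau_{i,j}(x_{a,i})\cdot\nabla(\Delta V)(x_{a,i})=o(1)$. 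Letting $a\to ka_*$ when $N=2$ (resp. $a\searrow0$ when $N=3$), so that $x_{a,i}\to b_i$ and $\tau_{i,j}(x_{a,i})\to\tau_{i,j}(b_i)$, and using that $\nabla(\Delta V)$ is continuous near $b_i$ (granted by $V\in C^4(\bigcup_i W_{\delta,i})$), we obtain $(D_{\tau_{i,j}}\Delta V)(b_i)=\tau_{i,j}(b_i)\cdot\nabla(\Delta V)(b_i)=0$ for $j=1,\dots,N-1$, which is \eqref{1.6}.

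The hard part is the error bookkeeping behind \eqref{propose-key}: since one must isolate a term of size $\lambda_a^{-1}$, the mere energy bound on $\omega_a$ is insufficient and has to be upgraded (through the linear equation $\omega_a$ satisfies, elliptic regularity, and a coercivity/comparison argument near $x_{a,i}$) to pointwise exponential decay, which then makes the $\omega_a$--part of the first and second moments of $u_a^2$ over $B_\rho(x_{a,i})$ respectively $o(\lambda_a^{-1/2})$ and $o(\lambda_a^{-1})$; this, rather than the geometric argument, is where the work lies. A further, purely bookkeeping, nuisance is that for $N=2$ one has $a\to ka_*>0$ while for $N=3$ one has $a\asymp\lambda_a^{-1/2}\to0$, so the natural small parameter is $\lambda_a^{-N/2}/a$ rather than $\lambda_a^{-N/2}$, and all the estimates above must be phrased in those units; one should also record, from \eqref{propose-poh} and $(V)$, that $x_{a,i}$ indeed eventually enters $W_{\delta,i}$, so that the frame $\{\nu_i,\tau_{i,j}\}$ and the local expansions of $V$ are legitimate.
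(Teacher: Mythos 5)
Your proposal is correct and follows essentially the same route as the paper: the tangential component of the local Pohozaev identity over $B_\rho(x_{a,i})$, with the boundary terms exponentially small, the first moment of $Q^2$ vanishing by radial symmetry, the second moment producing the $\lambda_a^{-1}\,\tau\cdot\nabla(\Delta V)(x_{a,i})$ term, and the refined (beyond-energy) smallness of the remainder $\omega_a$ carrying the weight of the argument --- exactly the ingredients of \eqref{30-31-7}, \eqref{luo-6}, \eqref{06-09-1} and Lemma~\ref{lem-5-05-1}. The only organizational difference is that the paper tests directly with $G(x)=\langle\nabla V(x),\tau_{a,i}\rangle$ for $\tau_{a,i}$ tangent to the level set $\Gamma_{t_a,i}$ through $x_{a,i}$, so that $G(x_{a,i})=0$ identically and your preliminary step of showing $d_i=o(\lambda_a^{-1/2})$ from the normal component is not needed.
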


It is proved in \cite{GS} that
 if  $V(x)$ has finite minimal points with the same minimal value, then  the the minimizers of \eqref{1-7-11} concentrate at  the ``flattest"  minimal points of $V(x)$ along a subsequence $a_l$ which approaches  $ a_*$ from left as $l\to \infty$.  On the other hand, for $V(x)=(|x|-1)^2$, in  \cite{Guo}, it is proved that
   the minimizers of \eqref{1-7-11} concentrate at some point in $\{x: |x|=1\}$, while for
   $$V(x)=\Big(\big(\frac{x_1^2}{a^2}+\frac{x_2^2}{b^2}\big)^\frac12-1\Big)^2,~\mbox{with}~a>b>0,$$
    it is  proved in \cite{GZ} that
     the minimizers of \eqref{1-7-11} concentrate at either  $(-a,0)$, or $(a,0)$ up to a subsequence.  Note that
   in all those cases, the concentrated point is a minimum point of the function $\Delta V$ on the relevant set.

Theorem~\ref{nth1.1} shows that not every $\{b_1,\cdots,b_k\}$ with $b_j\in\Gamma$ can generate a $k$-peak solution for \eqref{8-18-1}--\eqref{8-28-3}.
 To study the converse of Theorem~\ref{nth1.1}, we need the following non-degenerate condition on the critical point  of $V(x)$.
   We say that  $x_0 \in \Gamma_i$  is non-degenerate on $\Gamma_i$ if the following condition holds:
\begin{equation*}
\frac{\partial^2V(x_0)}{\partial \nu_i ^2}\neq 0~~\mbox{and}~~
det \Big(\Big(\frac{\partial^2 \Delta V(x_0)}{\partial \tau_{i,l}\partial \tau_{i,j}}\Big)_{1\leq l,j\leq N-1}\Big)\neq 0.
\end{equation*}

\begin{Thm}\label{nth1.2}
 Assume that \textup{($V$)}   holds.   If   $b_i\in \Gamma$ is  non-degenerate critical point of $V(x)$ on $\Gamma$ for $i=1,\cdots,k$ satisfying \eqref{1.6} and
 $b_i\ne b_j$ if $i\ne j$,
  then there exists a small constant $\theta>0$, such that \eqref{8-18-1}--\eqref{8-28-3}
  has  a $k$-peak solution $u_a$ concentrating at $b_1,\cdots,b_k$ as $a\to ka_*$ if $N=2$, provided

 \textup(i). $a\in [ka_*-\theta, ka_*)$, if  $ \displaystyle\sum^k_{i=1}\Delta V(b_i)>0$,~~~~
 \textup(ii). $a\in (ka_*, ka_*+\theta]$, if  $\displaystyle\sum^k_{i=1}\Delta  V(b_i)<0$,\\
or as $a\in (0, \theta]$ if $N=3$.
\end{Thm}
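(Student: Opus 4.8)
The plan is to prove Theorem~\ref{nth1.2} by a Lyapunov--Schmidt (finite-dimensional) reduction, treating $-\mu=\lambda$ as a free large parameter and then adjusting $\lambda=\lambda_a$ at the end so that the $L^2$-constraint \eqref{8-28-3} is met with the prescribed value of $a$. First I would fix the approximate solution. For points $\mathbf{x}=(x_1,\dots,x_k)$ with $x_i$ in a $\delta$-tubular neighborhood of $\Gamma_i$, set the ansatz $W_{\lambda,\mathbf{x}}(y)=\sqrt\lambda\sum_{i=1}^k Q(\sqrt\lambda(y-x_i))$ (more precisely the projection/truncation of $Q$ onto $H^1$, or $PQ$ solving the linear problem with potential) and look for a genuine solution of \eqref{1-23-5} of the form $w=W_{\lambda,\mathbf{x}}+\phi$ with $\phi$ in the space orthogonal (in the natural $\lambda$-weighted inner product) to the approximate kernel spanned by $\partial_{x_i}W$, $i=1,\dots,k$. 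Because $Q$ is nondegenerate (the kernel of the linearized operator $-\Delta+1-3Q^2$ on $\mathbb{R}^N$ is spanned by $\partial_j Q$), the linear theory gives, for $\lambda$ large, a unique small $\phi=\phi_{\lambda,\mathbf{x}}$ solving the projected equation, with the quantitative bound $\int(\lambda^{-1}|\nabla\phi|^2+\phi^2)=o(\lambda^{-N/2})$ uniformly in $\mathbf{x}$, and $\phi$ depending smoothly on $\mathbf{x}$. This step is standard; the only care needed is that $V$ is merely $C^4$ near $\Gamma$ and grows like $e^{\alpha|x|}$ with $\alpha<2$, which is still dominated by the Gaussian decay of $Q(\sqrt\lambda\,\cdot)$, so the error estimates go through.

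The second step is the reduced finite-dimensional problem. Substituting $w=W_{\lambda,\mathbf{x}}+\phi_{\lambda,\mathbf{x}}$ into the energy $\tilde J_\lambda(w)=\frac12\int(|\nabla w|^2+(\lambda+V)w^2)-\frac14\int w^4$ produces a function $K_\lambda(\mathbf{x})=\tilde J_\lambda(W_{\lambda,\mathbf{x}}+\phi_{\lambda,\mathbf{x}})$ whose critical points give genuine solutions of \eqref{1-23-5}. The key is the expansion of $K_\lambda$ in powers of $\lambda$: the leading term is $k\lambda^{1-N/2}$ times a constant (the energy of $Q$); the next relevant term involves $\sum_i V(x_i)\int Q^2$ type contributions, but since $x_i$ is near $\Gamma_i$ where $V\equiv V_i$ with vanishing normal derivative, the first genuinely $\mathbf{x}$-dependent correction comes from the second normal derivative $\partial^2_{\nu_i}V$ (this pins $x_i$ to $\Gamma_i$ itself, i.e. it forces the normal coordinate to a nondegenerate minimum/maximum of the profile, using $\partial^2_{\nu_i}V(x_0)\neq0$), and the tangential dependence enters at a still higher order through $\Delta V$ restricted to $\Gamma_i$. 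Concretely one expects, after Taylor expansion and using $\int |y|^2 Q^2$, an expansion of the schematic form
\begin{equation*}
K_\lambda(\mathbf{x})=c_0 k\lambda^{1-\frac N2}+c_1\lambda^{-\frac N2}\sum_{i=1}^k V_i + c_2\lambda^{-1-\frac N2}\sum_{i=1}^k \Big(\beta_i t_i^2+\gamma\,\Delta V(\pi_i(x_i))\Big)+o(\lambda^{-1-\frac N2}),
\end{equation*}
where $t_i$ is the signed normal coordinate of $x_i$ and $\pi_i(x_i)\in\Gamma_i$ its projection. Restricting to $\mathbf{x}$ with $t_i=0$ (justified since $\beta_i\neq0$ forces an interior critical point in the normal direction) reduces matters to finding critical points of $\mathbf{b}\mapsto\sum_i\Delta V(b_i)$ on $\Gamma_1\times\cdots\times\Gamma_k$. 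Condition \eqref{1.6} says each $b_i$ is a critical point of $\Delta V|_{\Gamma}$; the non-degeneracy hypothesis $\det\big((\partial^2_{\tau_{i,l}\tau_{i,j}}\Delta V(b_i))\big)\neq0$ together with $\partial^2_\nu V(b_i)\neq0$ makes the Hessian of the reduced functional nondegenerate, so a standard degree/implicit-function argument produces, for every large $\lambda$, a critical point $\mathbf{x}(\lambda)$ of $K_\lambda$ near $\mathbf{b}=(b_1,\dots,b_k)$, hence a $k$-peak solution $w_\lambda$ of \eqref{1-23-5} concentrating at $\mathbf{b}$.

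The third step converts this into a solution of the constrained problem \eqref{8-18-1}--\eqref{8-28-3} with the right $a$. Set $a_\lambda=\int_{\mathbb{R}^N}w_\lambda^2$; then $u_\lambda=w_\lambda/\sqrt{a_\lambda}$ solves \eqref{8-18-1}--\eqref{8-28-3} with $\mu=-\lambda$ and interaction $a_\lambda$, exactly as in \eqref{3-23-5}. From the ansatz, $a_\lambda=k\lambda^{1-N/2}(a_*+o(1))$; one needs the refined expansion
\begin{equation*}
a_\lambda=k a_* \lambda^{1-\frac N2}+\lambda^{-\frac N2}\Big(A\sum_{i=1}^k V_i+o(1)\Big)+(\text{lower order in }\lambda),
\end{equation*}
and, crucially, that $\lambda\mapsto a_\lambda$ is continuous (indeed, with the local uniqueness from Theorem~\ref{nth1.2}'s companion results, monotone-like for $\lambda$ large) so that its range covers a one-sided neighborhood of $k a_*$ when $N=2$ and a one-sided neighborhood of $0$ when $N=3$. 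The sign of the correction is governed by $\sum_i\Delta V(b_i)$: here I would use a Pohozaev-type identity for \eqref{1-23-5} to relate $\frac{d}{d\lambda}a_\lambda$ (or the subleading coefficient) to $\sum_i\Delta V(b_i)$, which explains the dichotomy in (i) and (ii) — $a_\lambda\nearrow k a_*$ from below exactly when $\sum_i\Delta V(b_i)>0$, and from above when $\sum_i\Delta V(b_i)<0$. For $N=3$ the quantity $a_\lambda=k\lambda^{-1/2}(a_*+o(1))\to 0^+$ is automatically positive and decreasing for large $\lambda$, so it sweeps out all small $a>0$ with no sign restriction. Inverting $a\mapsto\lambda_a$ and setting $u_a=u_{\lambda_a}$ gives the asserted family. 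I expect the main obstacle to be Step 3 — precisely, controlling the Lagrange multiplier: one must expand $a_\lambda$ to enough orders in $\lambda$, uniformly over the admissible concentration points, to read off both its limit and the correct direction of monotonicity, and the interplay between the $\lambda$-dependence of $\mathbf{x}(\lambda)$ and the $\lambda$-dependence of $a_\lambda$ requires the local uniqueness (nondegeneracy of the reduced Hessian) to make the map $\lambda\mapsto a_\lambda$ well-defined and regular. The different degenerate rates along the normal versus tangential directions at points of $\Gamma$ — quadratic in the normal variable but governed by $\Delta V$ two orders of $\lambda$ later in the tangential variables — is exactly what forces the multi-scale bookkeeping here, and is the technical heart of the argument.
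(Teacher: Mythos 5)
Your overall architecture coincides with the paper's: build $k$-peak solutions $w_\lambda$ of the unconstrained problem \eqref{1-23-5} by Lyapunov--Schmidt reduction, normalize $u_\lambda=w_\lambda/(\int w_\lambda^2)^{1/2}$, expand $a_\lambda=\int w_\lambda^2$ via a Pohozaev identity, and invert $\lambda\mapsto a_\lambda$ by the intermediate value theorem. The one genuine methodological difference is in how the reduced finite-dimensional problem is solved. You expand the reduced energy $K_\lambda(\mathbf{x})$ and look for a nondegenerate critical point; the paper instead works with the reduced \emph{equations}: it converts the projections of the equation onto the approximate kernel into local Pohozaev identities $\int_{B_d(x_{a,i})}\partial_{x_j}V\,w_\eta^2=O(e^{-\gamma/\eta})$, shows (Lemmas 4.1--4.2) that these are equivalent to $D_{\nu_i}V(z_i)=O(\eta^2)$ and $D_{\tau_{i,j}}\Delta V(z_i)=O(\eta^2+\sum_l|V(z_l)-V_l|\eta)$, and solves this system directly using $\partial^2_{\nu_i}V\neq0$ and the non-singularity of $\bigl(\partial^2_{\tau_{i,l}\tau_{i,j}}\Delta V(b_i)\bigr)$. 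Both routes are viable; the paper's buys a cleaner treatment of the two-scale degeneracy (the normal equation is solved first, pinning $|z_i-\bar z_i|=O(\eta^2)$, after which the tangential equation becomes a perturbation of $\nabla_T D_{\tau}\Delta V(b_i)$), whereas your energy route requires verifying that the $O(\eta^{N+4})$ tangential term in $K_\lambda$ survives the error $K_\lambda-I(W_{\lambda,\mathbf{x}})$, i.e.\ a remainder estimate of the quality $\|\phi\|_\eta^2=o(\eta^{N+4})$, which is exactly the delicate multi-scale bookkeeping you flag.

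One concrete imprecision you should fix: your displayed expansion $a_\lambda=ka_*\lambda^{1-N/2}+\lambda^{-N/2}\bigl(A\sum_iV_i+o(1)\bigr)+\cdots$ is inconsistent with the dichotomy you then assert. For $N=2$ the $\sum_iV_i$ contribution cancels identically (this is visible in the paper's computation leading to \eqref{8-29-1}: the $\frac{2a_*}{a}\sum V_i$ terms on the two sides of the identity cancel), and the true correction is $ka_*-a_\lambda=\frac{1}{2\lambda^2}\sum_i\Delta V(b_i)\int|x|^2Q^2+o(\lambda^{-2})$. If a term $A\lambda^{-1}\sum_iV_i$ were genuinely present it would dominate the $\lambda^{-2}$ term and the sign of $ka_*-a_\lambda$ would be governed by $\sum_iV_i$, contradicting alternatives (i)--(ii). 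Since you ultimately defer to a Pohozaev identity tying the subleading coefficient to $\sum_i\Delta V(b_i)$, the mechanism you have in mind is the correct one, but the expansion as written must be corrected before the argument closes. Finally, no monotonicity or local uniqueness of $\lambda\mapsto a_\lambda$ is needed for Step 3: continuity of the reduced family in $\lambda$ (from the implicit function theorem in the reduction) plus the sign information and the intermediate value theorem suffice, which is all the paper uses.
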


The existence result in Theorem~\ref{nth1.2} is new even when $k=1$ because it reveals that  \eqref{8-18-1}--\eqref{8-28-3} still has single peak solutions for $a>a_*$ if
$V(x)$ has a local maximum point or local maximum set. Let us point out that if $V(x)$ does not achieve its global minimum on $\Gamma$, any
 solution concentrating at a point on $\Gamma$ is not a ground state. Also, if $k>1$, then the solutions in Theorem \ref{nth1.2} are not ground states. So Theorem \ref{nth1.2} gives us the  existence of excited states for BEC problem as $a\rightarrow ka_*$ if $N=2$,  or $a\searrow 0$ if $N=3$. To our best knowledge, this is the first result concerning the existence of  excited states for  \eqref{8-18-1}--\eqref{8-28-3}.
Furthermore, we can prove  that for any integer $k>0$ and some $a$ near $k a^*$ in $N=2$,  or $a$ near $0$ in $N=3$, \eqref{8-18-1}--\eqref{8-28-3}  has an
 excited state solution which has $k$-peaks concentrated at one point (see Theorem \ref{nnth1.2} in Section 4).

\smallskip

Another main result of this paper is the following local uniqueness result.

\begin{Thm}\label{th1.4}

Suppose   \textup{($V$)}   holds.  Let $u_a^{(1)}(x)$ and $u_a^{(2)}(x)$ be two  $k$-peak solutions of \eqref{8-18-1}--\eqref{8-28-3} concentrating  at $b_1,\cdots,b_k$ with $b_i\in \Gamma$,  and $b_i\ne b_j$ if  $i\ne j$.  If $ b_i$ is non-degenerate, $i=1,\cdots, k$, 
 $
\displaystyle\sum^k_{i=1}\Delta V(b_i)\neq 0$, in $N=2$,
and
\[
\Big(\frac{\partial^2 \Delta V(b_i)}{\partial \tau_{i,l} \partial\tau_{i,j}}\Big)_{1\leq l,j\leq N-1}+\frac{\partial  \Delta V(b_i)}{\partial \nu_i}  diag \big(\kappa_{i,1}, \cdots, \kappa_{i,N-1}\big),~\mbox{for}~i=1,\cdots,k
\]
is non-singular, where $\kappa_{i,j}$ is the $j$-th principal curvature of $\Gamma$ at $b_i$ for $j=1,\cdots,N-1$,
then there exists a small positive number $\theta$,  such that $u_a^{(1)}(x)\equiv u_a^{(2)}(x)$ for all $a$ with $0<|a-ka_*|\le \theta $ if $N=2$, or $0< a \le \theta $ if $N=3$.
\end{Thm}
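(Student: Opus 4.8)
The plan is to argue by contradiction, running a blow-up analysis on the normalized difference of the two solutions while tracking the geometry of $\Gamma$ near each $b_i$ and the difference of the Lagrange multipliers through a family of local Pohozaev identities. Suppose the conclusion fails: along a sequence $a\to ka_*$ ($N=2$) or $a\searrow 0$ ($N=3$) there are distinct $k$-peak solutions $u_a^{(1)}\not\equiv u_a^{(2)}$ with multipliers $\mu_a^{(\ell)}$ and concentration points $x_{a,i}^{(\ell)}\to b_i$. First I would collect the a priori information from Theorem~\ref{th1-24-5} and the reduction behind Theorems~\ref{nth1.1}--\ref{nth1.2}: $-\mu_a^{(\ell)}\to+\infty$ with known leading order, $x_{a,i}^{(\ell)}=P_{a,i}^{(\ell)}+t_{a,i}^{(\ell)}\nu_i$ with $P_{a,i}^{(\ell)}\in\Gamma_i$ and $t_{a,i}^{(\ell)}\to0$, exponential decay of $u_a^{(\ell)}$ away from its peaks, and that both solutions share the same leading profile (so the rescaled peaks coincide in the limit and $(D_{\tau_{i,j}}\Delta V)(b_i)=0$ holds by Theorem~\ref{nth1.1}). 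Setting $\varepsilon_a=(-\mu_a^{(1)})^{-1/2}$, $\eta_a=\|u_a^{(1)}-u_a^{(2)}\|_{L^\infty}$ and $\xi_a=(u_a^{(1)}-u_a^{(2)})/\eta_a$, we have $\|\xi_a\|_\infty=1$,
\[
-\Delta\xi_a+\bigl(V-\mu_a^{(1)}\bigr)\xi_a=a\bigl((u_a^{(1)})^2+u_a^{(1)}u_a^{(2)}+(u_a^{(2)})^2\bigr)\xi_a+\frac{\mu_a^{(1)}-\mu_a^{(2)}}{\eta_a}\,u_a^{(2)},
\]
and, from the constraint \eqref{8-28-3}, the orthogonality $\int_{\mathbb R^N}(u_a^{(1)}+u_a^{(2)})\xi_a=0$.

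The next step is to control and identify the Lagrange-multiplier ratio. I would show $(\mu_a^{(1)}-\mu_a^{(2)})/\eta_a$ is bounded and that, along a subsequence, $c_0:=\lim(\mu_a^{(1)}-\mu_a^{(2)})\big/\bigl(\eta_a\sqrt{-a\mu_a^{(1)}}\bigr)$ exists, by testing the $\xi_a$-equation against $u_a^{(2)}$ and against peak-localized cut-offs of the approximate kernel and invoking the orthogonality. Then the far-field estimate $\xi_a=o(1)$ outside fixed neighbourhoods of the $b_i$ follows from $V-\mu_a^{(1)}\ge\frac12(-\mu_a^{(1)})$ there together with a comparison argument. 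Rescaling $\bar\xi_a^{(i)}(y):=\xi_a(x_{a,i}^{(1)}+\varepsilon_a y)$, elliptic estimates give $\bar\xi_a^{(i)}\to\xi^{(i)}$ in $C^1_{\mathrm{loc}}(\mathbb R^N)$ with $-\Delta\xi^{(i)}+\xi^{(i)}-3Q^2\xi^{(i)}=c_0Q$ in $\mathbb R^N$. Using the nondegeneracy of $Q$ and the identity $L\bigl(\tfrac12(Q+y\cdot\nabla Q)\bigr)=-Q$ for $L=-\Delta+1-3Q^2$, we obtain
\[
\xi^{(i)}=\sum_{j=1}^{N}b_{i,j}\,\partial_{y_j}Q-\tfrac{c_0}{2}\bigl(Q+y\cdot\nabla Q\bigr),
\]
and we set $b_{i,0}:=-c_0/2$, the same for every $i$ since the source has equal amplitude at every peak; not all the constants $b_{i,j},b_{i,0}$ vanish, otherwise the far-field estimate would give $\|\xi_a\|_\infty\to0$.

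The heart of the argument is the Pohozaev system. Writing each $b_i$ in adapted coordinates (outward normal $\nu_i$, principal tangents $\tau_{i,1},\dots,\tau_{i,N-1}$ of $\Gamma$), I would apply the local Pohozaev identities for $u_a^{(1)}$ and $u_a^{(2)}$ obtained by testing \eqref{8-18-1} against $\partial_{x_j}(\cdot)$ and against $(x-x_{a,i}^{(1)})\cdot\nabla(\cdot)$ on small balls $B_d(x_{a,i}^{(1)})$; subtracting, dividing by $\eta_a$ and the appropriate power of $\varepsilon_a$, and letting $a\to a_0$ kills the boundary terms (exponential decay) and turns the bulk terms — via the limits $\xi^{(i)}$ and the Taylor expansion of $\Delta V$ about $b_i$, in which the first-order tangential term drops by \eqref{1.6} — into a homogeneous linear system in the $b_{i,j}$ and $b_0$. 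The identity in the direction $\nu_i$, using $\partial^2V(b_i)/\partial\nu_i^2\ne0$, decouples and forces the normal component $b_{i,\nu}=0$ (pinning $t_{a,i}^{(1)}-t_{a,i}^{(2)}$ at higher order); the identities in the tangential directions give, for each $i$, up to a nonzero universal constant,
\[
\Bigl[\Bigl(\tfrac{\partial^2\Delta V(b_i)}{\partial\tau_{i,l}\partial\tau_{i,j}}\Bigr)_{l,j}+\tfrac{\partial\Delta V(b_i)}{\partial\nu_i}\,\mathrm{diag}(\kappa_{i,1},\dots,\kappa_{i,N-1})\Bigr](b_{i,1},\dots,b_{i,N-1})^{T}=0,
\]
the curvature term arising because $\Gamma$ bends relative to the flat ball $B_d$; and the dilation identity summed over $i$ yields $\bigl(\sum_i\Delta V(b_i)\bigr)b_0=0$ when $N=2$, whereas for $N=3$ the orthogonality relation $\sum_i\int Q\,\xi^{(i)}=0$ already gives $b_0=0$ since $\int Q(Q+y\cdot\nabla Q)=(1-\tfrac N2)\int Q^2\ne0$. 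Under the stated non-degeneracy hypotheses every coefficient matrix is invertible, so all $b_{i,j}=0$ and $b_0=0$, hence each $\xi^{(i)}\equiv0$; together with the far-field estimate this forces $\|\xi_a\|_\infty\to0$, contradicting $\|\xi_a\|_\infty=1$. Thus $u_a^{(1)}\equiv u_a^{(2)}$ for $a$ near $ka_*$ ($N=2$) or near $0$ ($N=3$).

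The main obstacle I anticipate is twofold: first, the control and exact identification of the Lagrange-multiplier ratio $(\mu_a^{(1)}-\mu_a^{(2)})/\eta_a$, which couples the $L^2$-constraint \eqref{8-28-3} to the blow-up and needs delicate cancellations; and second, making the Pohozaev bulk terms collapse to precisely the curvature-corrected Hessian of $\Delta V$ in the statement, which requires careful bookkeeping of the different degeneracy rates of $V$ at $b_i$ — infinite order along $\Gamma_i$, where $V\equiv V_i$, against order two in the normal direction — together with the geometry of $\Gamma$ entering through the principal curvatures $\kappa_{i,j}$.
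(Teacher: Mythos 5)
Your proposal is correct and follows essentially the same route as the paper: contradiction via the normalized difference $\xi_a$, identification of the multiplier ratio through the $L^2$-constraint orthogonality, classification of the blow-up limits in the kernel of the linearized operator augmented by $Q+y\cdot\nabla Q$ (the paper's Lemma \ref{lem-bbb}), and then the dilation, normal, and tangential local Pohozaev identities to kill the coefficients $b_0$ and $b_{i,j}$ under the stated non-degeneracy hypotheses. The only cosmetic difference is that for $N=3$ you extract $b_0=0$ directly from the constraint orthogonality rather than from the dilation identity's nonvanishing leading coefficient $4(2-N)a_*$, which amounts to the same cancellation.
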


As far as we know, local uniqueness results for peak (or bubbling) solutions are available only for the case where the solutions blow up
at $x_0$, which is an isolated critical point of the potential $V(x)$.   If $x_0$ is a non-degenerate critical point of $V(x)$, that is,  $ (D^2 V)$ is non-singular at $x_0$, one
can prove the local uniqueness of the peak solution concentrating at $x_0$ either by counting the local degree of the
corresponding reduced finite dimensional  problem as in  \cite{Cao3,CNY,G}, or by using Pohozaev type identities as in  \cite{Cao1,Deng,Grossi,GPY,GLW}.
One of the advantage in using the Pohazaev identities to prove the local uniqueness is that it can deal with the degenerate case. See  \cite{Cao1,Deng,GPY}.
Let us pointing out that in  \cite{Cao1,Deng,GPY}, though the critical point $x_0$ is degenerate, the rate of degeneracy along each direction is the same.
On the other hand,  an example given in \cite{Grossi} shows that  local uniqueness may not be true at a degenerate critical point $x_0$ of $V(x)$.
Thus, it is a very subtle problem to discuss the local uniqueness of peak solutions concentrating at a degenerate critical point.
Under the condition ($V$), the function $V(x)$ is non-degenerate along the normal direction $\nu_i$ of $\Gamma_i$. But along each tangential direction of $\Gamma_i$,
$V(x)$ is degenerate. Such non-uniform degeneracy makes the estimates more sophisticated.

Here we point out that the existence and local uniqueness of excited states to \eqref{8-18-1}--\eqref{8-28-3}
are also true for the following type of potential $V(x)$:
\begin{equation}\label{lll111}
V(x)=\prod^m_{i=1}|x-x_i|^{p_i},~\mbox{for all}~x\in \R^N~\mbox{with}~p_i>0 ~\mbox{and}~N=2,3.
\end{equation}
Also our arguments in this paper show that
it is much more effective to use the Pohozaev identities to study the asymptotic behaviors for all kinds of concentrated solutions, not just
for the minimizer. For example, using various  Pohozaev identities, we can easily derive the relation between $a$ and the Lagrange multiplier $\mu_a$ (see the proof of Proposition~\ref{p1-711}).

This paper is organized as follows.  In section~2, we will prove Theorem~\ref{th1-24-5}, while in section~3, we estimate the Lagrange multiplier $\mu_a$
in terms of $a$.  The results for the location of the peaks and  for the existence of peak solutions are proved in  section~4, and the local uniqueness of
peak solutions are investigated in section~5.

 For simplicity in using the notations, in this paper, we always assume that $b_j\in \Gamma_j$, $j=1,\cdots, k$. The results for other cases can
 be proved without any changes.

\section{A non-existence result and the Proof of Theorem~\ref{th1-24-5}}
First, we study the following problem:
\begin{equation}\label{1-26-10}
-\Delta u =   V_1(x)u, \; u>0, \quad \text{in}\; \mathbb R^N,
\end{equation}
where the function $V_1(x)$ satisfies $V_1> 1$  in $B_R(0)\setminus B_{t}(0)$ for some fixed $t>0$ and large $R>0$.

\begin{Prop}\label{th1-26-10}

Problem~\eqref{1-26-10} has no solution.

\end{Prop}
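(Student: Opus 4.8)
The plan is to derive a contradiction from the existence of a positive solution by producing a bounded ball on which the Schr\"odinger operator $-\Delta-V_1$ has strictly negative principal Dirichlet eigenvalue: such a sign is incompatible with the existence of a positive (super)solution of $-\Delta u=V_1u$ on that ball.

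First I would use the hypothesis that $V_1>1$ on the annulus $B_R(0)\setminus B_t(0)$ with $R$ large. Inside this annulus one can place a ball $B:=B_\rho(y)$ whose radius $\rho$ is comparable to $R-t$ (e.g.\ $\rho=(R-t)/3$, $|y|=(R+t)/2$), so that $\overline{B}\subset B_R(0)\setminus\overline{B_t(0)}$. Since the first Dirichlet eigenvalue of $-\Delta$ on $B_\rho(y)$ equals $\lambda_1(-\Delta,B_1(0))\,\rho^{-2}$, taking $R$ large makes it strictly smaller than $1$. Let $\phi>0$ be the corresponding $L^2$-normalized first Dirichlet eigenfunction, extended by $0$ outside $B$; then $\phi\in H^1_0(B)$ with $\int_B|\nabla\phi|^2<\int_B\phi^2$, while $V_1>1$ on $B$ gives $\int_B V_1\phi^2>\int_B\phi^2$.

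Next, assume for contradiction that $u>0$ solves \eqref{1-26-10}. By interior elliptic regularity (or the Harnack inequality) $u$ is continuous and bounded below by a positive constant on the compact set $\overline{B}$, so $\phi^2/u\in H^1_0(B)$ is an admissible test function. Testing the equation $-\Delta u=V_1u$ against $\phi^2/u$ (the compact support of $\phi$ in $B$ removes all boundary terms) and using the Picone-type pointwise identity
\[
|\nabla\phi|^2-\nabla u\cdot\nabla\!\Big(\frac{\phi^2}{u}\Big)=\Big|\nabla\phi-\frac{\phi}{u}\nabla u\Big|^2\ge 0,
\]
I obtain $\int_B V_1\phi^2=\int_B\nabla u\cdot\nabla(\phi^2/u)\le\int_B|\nabla\phi|^2<\int_B\phi^2$, which contradicts $\int_B V_1\phi^2>\int_B\phi^2$.

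The argument is essentially elementary; the only points needing care are the choice of the test function $\phi^2/u$ and the verification that it lies in $H^1_0(B)$ — which is exactly why one first upgrades $u$ to being continuous and strictly positive on $\overline{B}$ — together with restricting the Picone computation to the support of $\phi$ so that no boundary contribution arises. Equivalently, one may phrase the conclusion as: a positive solution of $-\Delta u=V_1u$ in $B$ is a positive supersolution, hence $\lambda_1(-\Delta-V_1,B)\ge0$, whereas the test function $\phi$ shows $\lambda_1(-\Delta-V_1,B)\le\lambda_1(-\Delta,B)-1<0$.
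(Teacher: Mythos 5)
Your proof is correct, but it takes a genuinely different route from the paper. The paper's argument is the classical Wronskian/Green's-identity form of Sturm comparison: it takes the oscillating radial solution $v$ of $-\Delta v=v$, picks an annulus $\Omega_k=B_{r_{k+1}}(0)\setminus B_{r_k}(0)$ between two consecutive zeros of $v$ that lies inside $B_R(0)\setminus B_t(0)$ and on which $v>0$, computes $\int_{\Omega_k}(-v\Delta u+u\Delta v)=\int_{\Omega_k}(V_1-1)uv>0$, and then observes that, since $v$ vanishes on $\partial\Omega_k$, this quantity equals $\int_{\partial\Omega_k}u\,\partial v/\partial\nu$, which is negative because $\partial v/\partial\nu<0$ there. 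Your argument is the variational/Picone form of the same comparison principle: a positive (super)solution on a subdomain forces $\lambda_1(-\Delta-V_1)\ge 0$ there, while a large ball inside the annulus has $\lambda_1(-\Delta)<1\le \inf V_1$, so $\lambda_1(-\Delta-V_1)<0$. Your version buys a cleaner, more self-contained proof: it needs only the scaling law $\lambda_1(B_\rho)=\lambda_1(B_1)\rho^{-2}$ and the pointwise Picone identity, and it avoids invoking the oscillation of the radial solution of $-\Delta v=v$ (which the paper dispatches with "a standard comparison argument") as well as the strict negativity of $\partial v/\partial\nu$ at the zeros. The paper's version is shorter on the page and needs no discussion of test-function admissibility, whereas you correctly flag that $\phi^2/u\in H^1_0(B)$ must be justified via $u\ge c>0$ on $\overline B$ (regularity or Harnack). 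Both proofs use the hypothesis in the same way, namely that $R$ is large enough relative to $t$ that the annulus contains a region supporting a test function with Rayleigh quotient below $1$.
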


\begin{proof}

Suppose that \eqref{1-26-10} has a solution $u$.
Consider the following problem
\begin{equation}\label{2-26-10}
-\Delta v =  v.
\end{equation}
By a standard comparison argument,  \eqref{2-26-10} has a radial solution $v(r)$, which has infinitely many
zeros points $0< r_1<r_2<\cdots< r_k<\cdots$.  Denote   $\Omega_k = B_{r_{k+1}}(0)\setminus B_{r_{k}}(0)$.  Let  $k_0>0$ be such that $ t< r_{k_0}$.  We now assume that  $R>> r_{k_0 }$.  We take $k \ge k_0$,  such that
  $v>0$ in $\Omega_{k}$,  then, we have
\begin{equation}\label{3-26-10}
\int_{\Omega_k}  \Bigl(-v \Delta u  + u\Delta v\Bigr) = \int_{\Omega_k} \Bigl( V_1(x) u v -  vu\Bigr)>0.
\end{equation}
Noting that $v(r_k)= v(r_{k+1})=0$, we obtain from \eqref{3-26-10} that
\begin{equation}\label{4-26-10}
\int_{\partial \Omega_k} u \frac{\partial v}{\partial \nu} >0,
\end{equation}
where $\nu$ is the outward unit normal of $\partial \Omega_k$.  But on $\partial \Omega_k$, $\frac{\partial v}{\partial \nu}<0$. Thus, we obtain a contradiction from \eqref{4-26-10}.
\end{proof}

\begin{proof}[\textbf{Proof of Theorem~\ref{th1-24-5}}]

First, we prove that $\mu_a\to -\infty$. We argue by contradiction.
Suppose that  $|\mu_a|\leq M$.  Since  $\int_{\mathbb R^N} u_a^2=1$, Moser iteration implies that  $u_a$ is uniformly bounded. That is, $u_a$  does not blow up.

 Suppose that  $\mu_a\rightarrow +\infty$.  We  let $V_1(x)= \mu_a - V(x) + a u^2_a$.  Since $u_a$ concentrates at some points,
we may assume that $a u_a^2\ge -1$ in $\mathbb R^N\setminus B_t(0)$ for some $t>0$.  Therefore, for any fixed $R>0$, we always have

\[
V_1(x)= \mu_a - V(x) + a u^2_a>1,\quad x\in B_R(0)\setminus B_t(0).
\]
By  Proposition~\ref{th1-26-10}, we obtain a contradiction.  Therefore, we have proved that $\mu_a\to -\infty$.  Let  $\lambda_a=-\mu_a$.
Let $x_a$ be the maximum point of $u_a$. From the equation \eqref{8-18-1}, we find

\[
 a u_a^3(x_a) \ge \big(\lambda_a + V(x_a)\big) u_a(x_a)>0.
 \]
 This gives $a>0$.

Suppose now $N=2$. Let $\bar u_a (x)= \frac1{\sqrt{\lambda_a}} u_a\bigl( \frac x{\sqrt{\lambda_a}}\bigr)$.  Then

\begin{equation}\label{30-27-5}
-\Delta \bar u_a + \Bigl( 1  + \frac1{ {\lambda_a}} V\bigl( \frac x{\sqrt{\lambda_a}}\bigr)\Bigr)\bar u_a  = a \bar u_a^3,\quad \text{in}\; \mathbb R^2,
\end{equation}
and

\begin{equation}\label{31-27-5}
\int_{\mathbb R^2} \bar u_a^2=1.
\end{equation}

 From \eqref{30-27-5} and \eqref{31-27-5}, using Moser iteration, we can prove that $|u_a|\le C$ for some constant independent of $a$.  Let $\bar x_a$ be a maximum point of $\bar
 u_a$.  Then

\[
a u_a^3(\bar x_a)\ge \Bigl( 1  + \frac1{ {\lambda_a}} V\bigl( \frac {\bar x_a}{\sqrt{\lambda_a}}\bigr)\Bigr)\bar u_a(\bar x_a),
\]
 which gives $
 a\ge   u_a^{-2}(\bar x_a)  \ge c_0>0$.
Using the standard blow-up argument, in view of \eqref{31-27-5},  we can prove that there exists an integer $k>0$, such that

 \begin{equation}\label{32-27-5}
 \bar u_a =\sum_{i=1}^k  Q_{a_0} (x-\bar x_{a, i})+\bar{\omega}_a(x),
 \end{equation}
 for some $\bar x_{a, i} \in \mathbb R^2$ with
 $$|\bar x_{a, i}-\bar x_{a, j}|\to +\infty,~\mbox{if}~i\ne j,~\displaystyle\int_{\R^2}\big[|\nabla\bar{\omega}_a|^2
 +\bar{\omega}^2_a\big]=o(1),$$
and $Q_{a_0}$ is the unique positive solution of

 \[
 -\Delta u + u = a_0 u^3,\;\; u\in H^1(\mathbb R^2),\;\; u(0)=\max_{x\in \mathbb R^2} u(x).
 \]
 Noting that  $Q_{a_0}=\frac1{\sqrt{a_0}} Q$, we obtain from \eqref{31-27-5}  and \eqref{32-27-5}  that  $a_0= k a_*$.

\end{proof}

\section{Some estimates for general potentials}

In this section,  we shall estimate $\mu_a$  in terms of $a$.

Let $\varepsilon =\frac{1}{ \sqrt{-\mu_a}}$ and $u(x)\mapsto \sqrt{\frac{-\mu_a}{a}} u(x)$,
then \eqref{8-18-1} can be changed to the following problem:
\begin{equation}\label{30-7-11}
-\varepsilon^2\Delta u+ \bigl( 1+ \varepsilon^2 V(x)\bigr)u= u^3,~
u\in H^1(\R^N).
\end{equation}
For any $a\in \R^+$, we define
 $\|u\|_a:=\displaystyle\int_{\mathbb R^N} \bigl( \varepsilon^2 |\nabla u|^2 + u^2\bigr)$.

 From \eqref{20-23-5}, we find  that a $k$-peak solution of \eqref{30-7-11} has  the following form
\begin{equation*}
\tilde{u}_a(x)= \sum_{i=1}^k Q_{\varepsilon,x_{a,i}}(x) +v_{a}(x),~\mbox{with}~~\|v_{a}\|_a=o(\varepsilon^{\frac N2}),
\end{equation*}
where $Q_{\varepsilon,x_{a,i}}(x):=Q\Big(\frac{\sqrt{1+\varepsilon^2V_i}(x-x_{a, i})}\varepsilon \Big)$.
Then, it holds
 \begin{equation}\label{06-07-3}
    -\varepsilon^2\Delta v_{a}+\Big(1+\varepsilon^2V(x)-
    3\sum_{i=1}^kQ^2_{\varepsilon,x_{a,i}}(x)\Big)
    v_{a}
    =N\big(v_{a}\big)+l_{a}(x),
 \end{equation}
where
\begin{equation}\label{06-07-1}
      N_a\big(v_{a}\big)= \Big(\sum_{i=1}^k Q_{\varepsilon,x_{a,i}}(x)
    +v_{a}\Big)^{3}-
    \Big(\sum_{i=1}^k Q_{\varepsilon,x_{a,i}}(x)\Big)^3
    -3\sum_{i=1}^k  Q^2_{\varepsilon,x_{a,i}}(x)v_{a},
\end{equation}
and
$$ l_{a}(x)=-\varepsilon^2\sum_{i=1}^k\big(V(x)-V_i\big)Q_{\varepsilon,x_{a,i}}(x)
+ \Big(\sum_{i=1}^k Q_{\varepsilon,x_{a,i}}(x)\Big)^3-
    \sum_{i=1}^k Q^3_{\varepsilon,x_{a,i}}(x).$$
 We can move $x_{a,i}$ a bit(still denoted by $x_{a,i}$), so that the error term $v_a\in \displaystyle\bigcap^k_{i=1}E_{a,x_{a,i}}$, where
\begin{equation}\label{06-08-1}
E_{a,x_{a,i}} :=\left \{ u(x)\in H^1(\R^N):
\Big\langle u,\frac{\partial Q_{\varepsilon,x_{a,i}}(x)}{\partial{x_j}}
\Big\rangle_{a}=0, ~j=1,\cdots,N
 \right\}.
 \end{equation}

 Let  $L_{a}$ be the bounded linear operator from  $H^1(\mathbb R^N)$ to itself, defined by
\begin{equation}\label{06-07-2}
\bigl\langle L_{a} u, v\bigr\rangle_a=\int_{\mathbb R^N}  \Bigl(  \varepsilon^2\nabla  u\nabla v
 +\big(1+\varepsilon^2V(x)\big)u v-3\sum_{i=1}^k   Q^2_{\varepsilon,x_{a,i}}(x)
u  v  \Bigr).
\end{equation}
Then, it is standard to prove the following lemma.
\begin{Lem}\label{lem-A.1}
There exist  constants $\rho>0$ and small $\theta>0$,  such that for all $a$ with $0<|a-ka_*|\le \theta $ in $N=2$ or $0< a \le \theta $ in $N=3$, it holds
 \begin{equation}\label{B.2}
   \|L_{a}u\|_{a}\geq \rho \|u\|_{a},~\mbox{for all}~u\in \bigcap^k_{i=1}E_{a,x_{a,i}}.
 \end{equation}

\end{Lem}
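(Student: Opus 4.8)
The plan is to establish the coercivity estimate \eqref{B.2} by the standard contradiction-and-compactness argument adapted to the rescaled problem \eqref{30-7-11}, tracking the small parameter $\varepsilon$ carefully. Suppose \eqref{B.2} fails; then there exist sequences $a_n\to ka_*$ (if $N=2$) or $a_n\searrow 0$ (if $N=3$), corresponding $\varepsilon_n\to 0$, concentration points $x_{a_n,i}$, and functions $u_n\in\bigcap_{i=1}^k E_{a_n,x_{a_n,i}}$ with $\|u_n\|_{a_n}=1$ but $\|L_{a_n}u_n\|_{a_n}\to 0$. Testing $L_{a_n}u_n$ against an arbitrary $\varphi$ with $\|\varphi\|_{a_n}\le 1$ and using the definition \eqref{06-07-2}, this means
\begin{equation*}
\int_{\mathbb R^N}\Bigl(\varepsilon_n^2\nabla u_n\nabla\varphi+(1+\varepsilon_n^2 V)u_n\varphi-3\sum_{i=1}^k Q^2_{\varepsilon_n,x_{a_n,i}}u_n\varphi\Bigr)=o(1)\|\varphi\|_{a_n}.
\end{equation*}

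Next I would blow up around each concentration point. Fix $i$ and set $u_n^{(i)}(y):=u_n\bigl(x_{a_n,i}+\varepsilon_n y/\sqrt{1+\varepsilon_n^2 V_i}\bigr)$; after this rescaling the mass $\int(\varepsilon_n^2|\nabla u_n|^2+u_n^2)$ near $x_{a_n,i}$ transforms into an $\varepsilon_n$-independent Dirichlet-plus-$L^2$ norm, and $Q_{\varepsilon_n,x_{a_n,i}}$ becomes $Q$. Since $\|u_n\|_{a_n}=1$, along a subsequence $u_n^{(i)}\rightharpoonup u^{(i)}$ weakly in $H^1_{\mathrm{loc}}(\mathbb R^N)$, and passing to the limit in the rescaled equation (the potential term $\varepsilon_n^2 V\to 0$ locally, using $V=O(e^{\alpha|x|})$ to control it on the relevant scale) shows that $u^{(i)}$ solves the linearized equation $-\Delta u^{(i)}+u^{(i)}=3Q^2 u^{(i)}$ in $\mathbb R^N$. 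The nondegeneracy of $Q$ (the kernel of $-\Delta+1-3Q^2$ on $H^1(\mathbb R^N)$ is spanned by $\partial_{y_1}Q,\dots,\partial_{y_N}Q$) then forces $u^{(i)}=\sum_j c_{i,j}\partial_{y_j}Q$. But the orthogonality conditions in \eqref{06-08-1} defining $E_{a_n,x_{a_n,i}}$ pass to the limit and give $\langle u^{(i)},\partial_{y_j}Q\rangle=0$ for all $j$, hence all $c_{i,j}=0$, so $u^{(i)}\equiv 0$. This shows $u_n\to 0$ strongly in $H^1$ on each ball $B_{R\varepsilon_n}(x_{a_n,i})$ in the rescaled sense, i.e. the mass of $u_n$ near the peaks vanishes.

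Then I would handle the region away from the peaks: on $\mathbb R^N\setminus\bigcup_i B_{R\varepsilon_n}(x_{a_n,i})$ the potential term $Q^2_{\varepsilon_n,x_{a_n,i}}$ is exponentially small (of order $e^{-cR}$ uniformly), so testing the near-vanishing equation against $\varphi=u_n\chi$ with $\chi$ a cutoff supported outside the peaks yields $\int_{\text{far}}(\varepsilon_n^2|\nabla u_n|^2+u_n^2)\le o(1)+Ce^{-cR}$. Combining the near-peak and far-field estimates and letting $R\to\infty$ contradicts $\|u_n\|_{a_n}=1$. The main obstacle — and the point requiring the most care — is the uniformity in $\varepsilon_n$ (equivalently in $a$) of all the estimates: one must verify that the decomposition \eqref{06-08-1} genuinely holds for the sequences in question (this is where ``moving $x_{a,i}$ a bit'' is used), that the exponential decay of $Q$ survives the rescaling uniformly, and that the potential contribution $\varepsilon_n^2 V(x)$ does not spoil the limiting equation despite $V$ being only of exponential growth; the latter is exactly why the hypothesis $V(x)=O(e^{\alpha|x|})$ with $\alpha<2$ appears in condition $(V)$, since on the blow-up scale $|x-x_{a,i}|\sim\varepsilon_n|y|$ the factor $\varepsilon_n^2 e^{\alpha\varepsilon_n|y|}$ stays negligible against the Gaussian-type decay of $Q(y)^2$.
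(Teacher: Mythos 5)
Your proposal is correct and is precisely the standard contradiction--blow-up--nondegeneracy argument that the paper has in mind: the paper offers no proof at all, stating only that the lemma is ``standard to prove''. All the essential points are in place --- the rescaled weak limit lying in the kernel of $-\Delta+1-3Q^2$, the orthogonality conditions \eqref{06-08-1} passing to the limit to kill it, the far-field coercivity from $V\ge 0$ and the exponential smallness of $Q^2_{\varepsilon,x_{a,i}}$ away from the peaks, and the separation of the points $b_i$ decoupling the blow-ups.
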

\begin{Lem}\label{lem-5-05-1}
A $k$-peak solution $\tilde{u}_a$    for \eqref{30-7-11}  concentrating at $b_1,\cdots,b_k$ has  the following form
\begin{equation}\label{aaa8-20-1}
\tilde{u}_a(x)= \sum_{i=1}^k   Q_{\varepsilon,x_{a,i}}(x)+v_{a}(x),
\end{equation}
with  $v_a \in \displaystyle\bigcap^k_{i=1}E_{a,x_{a,i}}$  and
\begin{equation}\label{lt1}
\|v_{a}\|_a=O\Big(
\big|\sum^k_{i=1}\big(V(x_{a,i})-V_i\big)\big| \varepsilon^{{\frac{N}{2}+2} } +\big| \sum^k_{i=1} \nabla V(x_{a,i})
   \big|\varepsilon^{\frac{N}{2}+3}+ \varepsilon^{\frac{N}{2}+4} \Bigr).
\end{equation}
\end{Lem}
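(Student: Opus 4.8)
The plan is to use the standard finite-dimensional reduction (Lyapunov--Schmidt) machinery that is already set up in the excerpt, together with the a priori concentration information from Theorem~\ref{th1-24-5}. We start from the decomposition $\tilde u_a = \sum_{i=1}^k Q_{\varepsilon,x_{a,i}} + v_a$ with $v_a$ in the complement space $\bigcap_i E_{a,x_{a,i}}$ (the projection onto this complement is possible by moving the $x_{a,i}$, as already noted before \eqref{06-08-1}). The point is that a genuine $k$-peak solution of \eqref{30-7-11}, once written in this form, must have its error term $v_a$ satisfy \eqref{06-07-3}, and the size of $v_a$ is controlled by the size of the error $l_a(x)$ together with the invertibility estimate \eqref{B.2} of Lemma~\ref{lem-A.1}.

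The key steps, in order, are as follows. First, I would project equation \eqref{06-07-3} onto $\bigcap_i E_{a,x_{a,i}}$: writing the left-hand side as $\langle L_a v_a, \cdot\rangle_a$ plus lower-order terms and using \eqref{B.2}, one gets $\|v_a\|_a \le C\bigl(\|l_a\|_{a,*} + \|N_a(v_a)\|_{a,*}\bigr)$, where $\|\cdot\|_{a,*}$ denotes the relevant dual norm (the norm measuring the right-hand side against test functions in the complement). Second, I would estimate the nonlinear term: since $N_a(v_a)$ is quadratic-and-higher in $v_a$, one has $\|N_a(v_a)\|_{a,*} = o(\|v_a\|_a)$ (using that $\|v_a\|_a = o(\varepsilon^{N/2})$ so $v_a$ is small in the natural scaling), hence this term can be absorbed into the left side. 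Third — and this is the computational heart of the matter — I would estimate $\|l_a\|_{a,*}$ term by term. The piece $-\varepsilon^2\sum_i (V(x)-V_i)Q_{\varepsilon,x_{a,i}}$ is handled by Taylor-expanding $V(x)-V_i$ around $x_{a,i}$: the constant contribution gives $\varepsilon^2 |V(x_{a,i})-V_i|$ times $\|Q_{\varepsilon,x_{a,i}}\|$, which after the scaling produces the $|\sum_i(V(x_{a,i})-V_i)|\varepsilon^{N/2+2}$ term; the linear term $\nabla V(x_{a,i})\cdot(x-x_{a,i})$ contributes $|\sum_i \nabla V(x_{a,i})|\varepsilon^{N/2+3}$ (one more power of $\varepsilon$ from the scaled variable, and the pairing picks out the relevant sum because of the $E$-orthogonality and the decaying weight); the quadratic and higher terms in the Taylor expansion of $V$, bounded using $V\in C^4$, give the $\varepsilon^{N/2+4}$ term. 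The remaining piece $\bigl(\sum_i Q_{\varepsilon,x_{a,i}}\bigr)^3 - \sum_i Q^3_{\varepsilon,x_{a,i}}$ is the interaction term among different bubbles; since $\sqrt{-\mu_a}|x_{a,i}-x_{a,j}|\to\infty$ (Theorem~\ref{th1-24-5}), the exponential decay of $Q$ makes this term exponentially small in $1/\varepsilon$, hence negligible compared to $\varepsilon^{N/2+4}$. Collecting these gives exactly \eqref{lt1}.

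The main obstacle I anticipate is keeping the bookkeeping of the scaling exponents straight: the solution $\tilde u_a$ lives after the rescaling $u\mapsto\sqrt{-\mu_a/a}\,u(\cdot)$ and $\varepsilon = 1/\sqrt{-\mu_a}$, so every $L^2$-type norm of $Q_{\varepsilon,x_{a,i}}$ carries a factor $\varepsilon^{N/2}$, and one must carefully track where the extra powers $\varepsilon^2$, $\varepsilon^3$, $\varepsilon^4$ come from (the potential prefactor $\varepsilon^2 V$, the change of variables in the Taylor expansion, and the dual-norm pairing). A subtlety worth flagging is that $V$ is only assumed $C^4$ near $\Gamma$ (and of at most exponential growth globally), so one must confine the analysis to a neighborhood $W_{\delta,i}$ of each $\Gamma_i$ where the bubble is concentrated and control the exterior contribution by the exponential decay of $Q$ against the $e^{\alpha|x|}$ bound on $V$ with $\alpha<2$; this is where condition ($V$) is genuinely used. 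Once these estimates are in place, \eqref{aaa8-20-1}--\eqref{lt1} follow by combining the projected estimate with the bound on $\|l_a\|_{a,*}$ and absorbing the nonlinear term.
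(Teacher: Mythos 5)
Your proposal is correct and follows essentially the same route as the paper: the paper's proof likewise writes the equation for $v_a$ as \eqref{06-07-3}, estimates $\|l_a\|_a$ by the right-hand side of \eqref{lt1} and $\|N(v_a)\|_a=o(1)\|v_a\|_a$, and then invokes the coercivity estimate \eqref{B.2} of Lemma~\ref{lem-A.1} to conclude. Your term-by-term Taylor expansion of $l_a$ and the remark about confining the analysis to $W_{\delta,i}$ simply fill in what the paper dismisses as ``standard calculations.''
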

\begin{proof}

 By standard calculations, we find
 \begin{equation}\label{10-31-7}
   \|l_{a}\|_{a}=O\Big(\big|\sum^k_{i=1}\big(V(x_{a,i})-V_i\big)\big|\varepsilon^{{\frac{N}{2}+2} } +\big| \sum^k_{i=1} \nabla V(x_{a,i})
   \big|\varepsilon^{\frac{N}{2}+3}+ \varepsilon^{\frac{N}{2}+4} \Bigr),
 \end{equation}
 and
\begin{equation}\label{aaab10-31-7}
    \|N\big(v_{a}\big)\|_{a}=o(1)\|v_{a}\|_{a}.
\end{equation}
Then from \eqref{06-07-3}, \eqref{B.2}, \eqref{10-31-7} and \eqref{aaab10-31-7},  we get  \eqref{aaa8-20-1} and \eqref{lt1}.

\end{proof}

Let  $\tilde v_{a}(x) =v_{a}(\varepsilon x + x_{a,i})$. Then, $\tilde v_{a}$ satisfies
$\|\tilde v_{a}\|_a=O\big(\varepsilon^2\big)$. Using the Moser iteration, we can prove $
\|\tilde v_{a}\|_{L^\infty(\mathbb R^N)}=o(1)$.
 From this  and the comparison theorem, we can prove the following estimates for $\tilde u_a(x)$ away from the concentrated points $b_1,\cdots,b_k$.

\begin{Prop}\label{prop1-1}
Suppose that $\tilde u_a(x)$ is a $k$-peak solution  of  \eqref{30-7-11} concentrated at $b_1,\cdots,b_k$.  Then
for any fixed $R\gg 1$, there exist some $\theta>0$ and $C>0$, such that
\begin{equation}\label{2--5}
|\tilde u_a(x)|+|\nabla \tilde u_a(x)|\leq C\sum^k_{i=1}e^{-\theta |x-x_{a,i}|/\varepsilon},~\mbox{for}~
x\in \R^N\backslash \bigcup^k_{i=1}B_{R \varepsilon}(x_{a,i}).
\end{equation}
\end{Prop}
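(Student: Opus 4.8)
The plan is to establish the pointwise exponential decay estimate by combining the known global energy/$L^\infty$ bounds for $\tilde v_a$ with a comparison (maximum-principle) argument for the rescaled equation \eqref{30-7-11}. First I would record the decomposition $\tilde u_a = \sum_{i=1}^k Q_{\varepsilon,x_{a,i}} + v_a$ and use the fact, stated just before the proposition, that the rescaled error $\tilde v_a(x) = v_a(\varepsilon x + x_{a,i})$ satisfies $\|\tilde v_a\|_{L^\infty(\R^N)} = o(1)$; together with the standard exponential decay of $Q$, this gives $\|\tilde u_a\|_{L^\infty(\R^N\setminus\bigcup_i B_{R\varepsilon/2}(x_{a,i}))} \to 0$ uniformly in $a$ as $R$ is fixed large and $a \to a_0$. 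Consequently, for $x$ outside $\bigcup_i B_{R\varepsilon}(x_{a,i})$ we have $\tilde u_a^2 = o(1)$, so the coefficient in
\[
-\varepsilon^2 \Delta \tilde u_a + \bigl(1 + \varepsilon^2 V(x) - \tilde u_a^2\bigr)\tilde u_a = 0
\]
satisfies $1 + \varepsilon^2 V(x) - \tilde u_a^2 \ge \tfrac12$ on that region, provided $\theta$ (hence the closeness of $a$ to $a_0$) is small; here one uses the growth restriction $V(x) = O(e^{\alpha|x|})$ with $\alpha \in (0,2)$ from condition $(V)$ to control $\varepsilon^2 V(x)$ against the possible growth of the barrier, so that the effective potential stays bounded below away from the peaks.

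Next I would build a comparison function of the form $W(x) = M \sum_{i=1}^k e^{-\theta|x-x_{a,i}|/\varepsilon}$ with suitable constants $M>0$ and $\theta \in (0, \tfrac12)$, chosen so that $-\varepsilon^2\Delta W + \tfrac12 W \ge 0$ in $\R^N \setminus \bigcup_i B_{R\varepsilon}(x_{a,i})$ (a direct computation: $-\varepsilon^2\Delta(e^{-\theta|y|/\varepsilon}) = (-\theta^2 + \theta(N-1)\varepsilon/|y|)e^{-\theta|y|/\varepsilon} \ge -\theta^2 e^{-\theta|y|/\varepsilon}$, and $\theta^2 < \tfrac12$). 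On the boundary spheres $|x-x_{a,i}| = R\varepsilon$ we have $|\tilde u_a| \le C e^{-\theta R}$ (decay of $Q$ plus smallness of $\tilde v_a$), while $W \ge M e^{-\theta R}$ there, so choosing $M$ large we get $W \ge |\tilde u_a|$ on the boundary; since $\tilde u_a > 0$ and $W \to 0$ at infinity, the maximum principle for the operator $-\varepsilon^2\Delta + \tfrac12$ yields $0 < \tilde u_a(x) \le W(x)$ throughout $\R^N \setminus \bigcup_i B_{R\varepsilon}(x_{a,i})$, which is the stated bound for $|\tilde u_a|$.

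Finally, I would upgrade the zeroth-order estimate to a gradient estimate by interior elliptic regularity (Schauder or $W^{2,p}$ estimates) for the equation \eqref{30-7-11}: on a ball $B_{\varepsilon}(x_0)$ with $x_0$ at distance $\ge 2R\varepsilon$ from all $x_{a,i}$, rescaling $y = (x-x_0)/\varepsilon$ normalizes the equation, the right-hand side $u^3 - (1+\varepsilon^2 V)u$ is controlled by the already-proven $L^\infty$ bound, and $\varepsilon|\nabla\tilde u_a(x_0)|$ is bounded by $\sup_{B_{2R\varepsilon}^c} |\tilde u_a|$ on a slightly larger ball, giving the $e^{-\theta|x-x_{a,i}|/\varepsilon}$ rate for $|\nabla \tilde u_a|$ as well (after shrinking $\theta$ by an arbitrarily small amount to absorb the slight enlargement of the ball). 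The main obstacle I anticipate is not any single step but the bookkeeping of uniformity: one must make sure the constants $\theta$, $M$, $C$ can be chosen independently of $a$ in the relevant range, which forces the verification that the smallness of $\tilde u_a$ away from the peaks and the lower bound $1 + \varepsilon^2 V - \tilde u_a^2 \ge \tfrac12$ both hold uniformly — the latter being where the exponential growth hypothesis $V(x) = O(e^{\alpha|x|})$, $\alpha < 2$, is genuinely used to prevent the potential term from overwhelming the barrier at large $|x|$.
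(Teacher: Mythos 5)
Your proposal matches the paper's own (very brief) argument: the paper likewise obtains $\|\tilde v_a\|_{L^\infty(\R^N)}=o(1)$ by Moser iteration and then simply invokes ``the comparison theorem,'' which is exactly the exponential-barrier maximum-principle argument you spell out, followed by standard interior elliptic estimates for the gradient. One small correction: since $V\ge 0$, the lower bound $1+\varepsilon^2V(x)-\tilde u_a^2\ge \tfrac12$ away from the peaks follows from the smallness of $\tilde u_a$ alone, so the growth restriction $V(x)=O(e^{\alpha|x|})$ with $\alpha\in(0,2)$ is not what makes this step work.
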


\begin{Lem}
It holds
\begin{equation}\label{06-13-1}
(4-N)\displaystyle\int_{\R^N} Q^4=4\int_{\R^N}Q^2,~\mbox{for}~N=2,3.
\end{equation}
\end{Lem}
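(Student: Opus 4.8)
The strategy is to combine two scalar integral identities satisfied by $Q$ and then eliminate the Dirichlet energy. Write $A=\int_{\R^N}|\nabla Q|^2$, $B=\int_{\R^N}Q^2$ and $C=\int_{\R^N}Q^4$. Recall that $Q$ solves $-\Delta Q+Q=Q^3$ in $\R^N$ and, being the ground state, $Q$ and $\nabla Q$ decay exponentially at infinity, so all integrations by parts below are legitimate and all surface integrals over $\partial B_R$ vanish as $R\to\infty$.

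First I would multiply the equation by $Q$ and integrate over $\R^N$ to get the Nehari-type identity
\[
A+B=C .
\]
Next I would produce the Pohozaev identity. Multiplying $-\Delta Q+Q=Q^3$ by $x\cdot\nabla Q$ and integrating, and using
$\int_{\R^N}(-\Delta Q)(x\cdot\nabla Q)=-\tfrac{N-2}{2}A$,
$\int_{\R^N}Q\,(x\cdot\nabla Q)=-\tfrac N2 B$ and
$\int_{\R^N}Q^3(x\cdot\nabla Q)=-\tfrac N4 C$
(each obtained from a single integration by parts, e.g. $Q(x\cdot\nabla Q)=\tfrac12 x\cdot\nabla(Q^2)$ and $Q^3(x\cdot\nabla Q)=\tfrac14 x\cdot\nabla(Q^4)$), one obtains
\[
\frac{N-2}{2}A+\frac N2 B=\frac N4 C .
\]
Equivalently, this last relation is nothing but $\tfrac{d}{dt}\big|_{t=1}E\big(Q(\cdot/t)\big)=0$ with $E(u)=\tfrac12\int_{\R^N}(|\nabla u|^2+u^2)-\tfrac14\int_{\R^N}u^4$, which holds since $Q$ is a critical point of $E$; either derivation is fine.

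Finally I would eliminate $A$. From the Nehari identity $A=C-B$, and substituting into the Pohozaev identity gives $\tfrac{N-2}{2}(C-B)+\tfrac N2 B=\tfrac N4 C$, i.e. $\tfrac{N-2}{2}C+B=\tfrac N4 C$, hence $B=\tfrac{4-N}{4}C$; multiplying by $4$ yields exactly \eqref{06-13-1}, for every $N$ and in particular for $N=2,3$. The computation is routine bookkeeping; the only point meriting a word of justification is the exponential decay of $Q$ and $\nabla Q$, needed to discard the boundary terms when testing against $x\cdot\nabla Q$, which is a standard property of the ground state, so there is no real obstacle here.
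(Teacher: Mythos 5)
Your proof is correct and follows essentially the same route as the paper, which also derives \eqref{06-13-1} directly from the Nehari identity $\int|\nabla Q|^2+\int Q^2=\int Q^4$ together with the Pohozaev identity $(N-2)\int|\nabla Q|^2+N\int Q^2=\frac N2\int Q^4$ (see \eqref{a5-21-1}). Your elimination of the Dirichlet energy and the justification of the boundary terms match the paper's (terser) argument.
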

\begin{proof}
It is direct by the following  Pohozaev identities:
\begin{equation}\label{a5-21-1}
 \int_{\R^N}|\nabla Q|^2+\int_{\R^N} Q^2
=\int_{\R^N} Q^4,~~
 (N-2)\int_{\R^N}|\nabla Q|^2+N\int_{\R^N} Q^2
=\frac{N}{2}\int_{\R^N} Q^4.
\end{equation}
\end{proof}
\begin{Prop}\label{p1-711}
Let $N=2$ and $a\rightarrow ka_*$,
it holds
\begin{equation}\label{8-29-1}
(ka_*-a)  \mu^2_a= \frac{1}{2} \sum^k_{i=1}\Delta V(b_{i}) \displaystyle\int_{\R^2}|x|^2Q^2(x) +o\big(1\big).
\end{equation}
\end{Prop}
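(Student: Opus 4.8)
The parameter $a$ enters the rescaled problem \eqref{30-7-11} only through the $L^{2}$ normalisation: since $u\mapsto\sqrt{-\mu_a/a}\,u$ and $\varepsilon=(-\mu_a)^{-1/2}$, the constraint \eqref{8-28-3} becomes $\int_{\R^2}\tilde u_a^{2}=a\varepsilon^{2}=-a/\mu_a$, while everything else is governed by $\varepsilon$ alone, with $\mu_a^{2}=\varepsilon^{-4}$; thus \eqref{8-29-1} is equivalent to an expansion of $\int_{\R^2}\tilde u_a^{2}$ up to order $\varepsilon^{6}$. The plan is to pair the constraint with a \emph{local} Pohozaev identity: testing \eqref{30-7-11} against $(x-x_{a,i})\cdot\nabla\tilde u_a$ on $B_\delta(x_{a,i})$, discarding the boundary terms, which are $O(e^{-c/\varepsilon})$ by Proposition~\ref{prop1-1}, summing over $i$, and inserting $\int_{\R^2}\tilde u_a^{2}=a\varepsilon^{2}$, one gets in $N=2$
\[
\int_{\R^2}\tilde u_a^{4}=2a\varepsilon^{2}+2\varepsilon^{2}\!\int_{\R^2}\!V\tilde u_a^{2}+\varepsilon^{2}\sum_{i=1}^{k}\int_{B_\delta(x_{a,i})}\!\!\big((x-x_{a,i})\cdot\nabla V\big)\tilde u_a^{2}+O(e^{-c/\varepsilon}),
\]
and comparing the two sides term by term will isolate $ka_*-a$.

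I would then substitute $\tilde u_a=\sum_{i}Q_{\varepsilon,x_{a,i}}+v_a$ from Lemma~\ref{lem-5-05-1} and expand each integral to precision $o(\varepsilon^{6})$. Since $N=2$ is $L^{2}$-critical, the amplitude and width dilations of $Q_{\varepsilon,x_{a,i}}$ cancel in its mass, $\int Q_{\varepsilon,x_{a,i}}^{2}=\varepsilon^{2}a_*$, and by \eqref{06-13-1}, \eqref{a5-21-1} one has $\int Q_{\varepsilon,x_{a,i}}^{4}=2(1+\varepsilon^{2}V_i)\varepsilon^{2}a_*$ and $\int|\nabla Q_{\varepsilon,x_{a,i}}|^{2}=(1+\varepsilon^{2}V_i)a_*$, with $V_i=V(b_i)$. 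The cross terms between distinct peaks are $O(e^{-c/\varepsilon})$ because $\sqrt{-\mu_a}\,|x_{a,i}-x_{a,j}|\to\infty$ (Theorem~\ref{th1-24-5}). Using that $b_i$ is a critical point of $V$, so $\nabla V(b_i)=0$, a Taylor expansion together with the radial symmetry of $Q$ gives $\int(V-V_i)Q_{\varepsilon,x_{a,i}}^{2}=\tfrac14\varepsilon^{4}\Delta V(b_i)\int_{\R^2}|x|^{2}Q^{2}+o(\varepsilon^{4})$ and the analogous expansion for $\int\big((x-x_{a,i})\cdot\nabla V\big)Q_{\varepsilon,x_{a,i}}^{2}$. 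The decisive structural point is that the $O(\varepsilon^{2})$ and $O(\varepsilon^{4})$ pieces carrying $\sum_i V_i$ agree on the two sides of the identity, so that $ka_*-a$ turns out to be $O(\varepsilon^{4})$ rather than $O(\varepsilon^{2})$.

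The one delicate term is $\sum_i\int Q_{\varepsilon,x_{a,i}}^{3}v_a$, which is itself of order $\varepsilon^{6}$ and hence cannot be absorbed into the error merely from a size bound on $\|v_a\|_a$; I would evaluate it by writing $Q_{\varepsilon,x_{a,i}}^{3}=-\tfrac12 L_i Q_{\varepsilon,x_{a,i}}$, where $L_i$ is the linearisation of \eqref{30-7-11} at $Q_{\varepsilon,x_{a,i}}$, transferring $L_i$ onto $v_a$ by self-adjointness, and then using the equation \eqref{06-07-3} for $v_a$; all the resulting terms except $-\tfrac12\int Q_{\varepsilon,x_{a,i}}l_a$ are $o(\varepsilon^{6})$, while $\int Q_{\varepsilon,x_{a,i}}l_a=-\varepsilon^{2}\int(V-V_i)Q_{\varepsilon,x_{a,i}}^{2}+O(e^{-c/\varepsilon})$ has already been expanded. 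Collecting the $\varepsilon^{6}$-order contributions in the identity, dividing by $\varepsilon^{2}$, and using $\mu_a^{2}=\varepsilon^{-4}$ then produces \eqref{8-29-1}.

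The main obstacle is the bookkeeping at order $\varepsilon^{6}$: one must track several competing contributions --- from the $\sqrt{1+\varepsilon^{2}V_i}$ dilation in $Q_{\varepsilon,x_{a,i}}$, from the position of $x_{a,i}$ relative to the critical set (the local Pohozaev identity in the normal direction forces $\nabla V(x_{a,i})=O(\varepsilon^{2})$ and $V(x_{a,i})-V_i=O(\varepsilon^{4})$, which also refines Lemma~\ref{lem-5-05-1} to $\|v_a\|_a=O(\varepsilon^{5})$), and from $v_a$ itself --- and must check that the unwanted ones cancel. Using a \emph{local} rather than a global Pohozaev identity is essential here: it removes the spurious origin-dependent term $\propto\sum_i b_i\cdot\nabla\Delta V(b_i)$ that testing against $x\cdot\nabla\tilde u_a$ would otherwise generate.
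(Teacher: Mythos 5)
Your proposal is correct and follows essentially the same route as the paper: the paper likewise sums the local Pohozaev identities \eqref{8-19-3} obtained by pairing the equation with $\langle x-x_{a,i},\nabla u_a\rangle$ on balls around the peaks, inserts the constraint \eqref{8-28-3}, and then expands $\int_{\R^2}u_a^4$ and the potential terms using the refined peak decomposition together with $\nabla V(b_i)=0$, so that the $\sum_iV_i$ contributions cancel and the $\Delta V(b_i)$ term survives at the top order. The one substantive difference is the borderline term $\int Q_{\varepsilon,x_{a,i}}^3v_a$: the paper disposes of it by asserting it vanishes ``by orthogonality'' (which does not literally follow from the definition \eqref{06-08-1}, since $E_{a,x_{a,i}}$ only encodes orthogonality to the translation modes $\partial Q_{\varepsilon,x_{a,i}}/\partial x_j$), whereas your evaluation via $Q^3=-\tfrac12L_iQ$, self-adjointness, and the equation \eqref{06-07-3} for $v_a$ is the more careful way to control this $O(\varepsilon^6)$ contribution. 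One small bookkeeping slip: with the paper's normalization $Q_{\varepsilon,x_{a,i}}=Q\bigl(\sqrt{1+\varepsilon^2V_i}\,(x-x_{a,i})/\varepsilon\bigr)$ one has $\int Q_{\varepsilon,x_{a,i}}^2=\varepsilon^2a_*/(1+\varepsilon^2V_i)$, $\int Q_{\varepsilon,x_{a,i}}^4=2\varepsilon^2a_*/(1+\varepsilon^2V_i)$ and $\int|\nabla Q_{\varepsilon,x_{a,i}}|^2=a_*$, so your dilation factors sit on the wrong side; this does not change the structure of the cancellation you describe, but it must be fixed for the constant in \eqref{8-29-1} to come out right.
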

\begin{proof}Let $u_{a}$ be a  solution of \eqref{8-18-1}--\eqref{8-28-3}, multiplying
$\langle x-x_{a,i},\nabla u_{a}\rangle$ on both sides of \eqref{8-18-1} and integrating on $B_d(x_{a,i})$, we find
\begin{equation}\label{8-19-3}
 \int_{B_d(x_{a,i})}\big[\big(2V(x)+\langle \nabla V(x), x-x_{a,i}\rangle \big) u_a^2
-2\mu_a  u_a^2-a u_a^4\big]
= \int_{\partial B_d(x_{a,i})} W(x)d\sigma,
\end{equation}
where
$$W(x):=-2\frac{\partial u_a}{\partial\nu}\langle x-x_{a,i},\nabla u_{a}\rangle
+\langle x-x_{a,i},\nu\rangle \big[|\nabla u_{a}|^2+V(x)u^2_{a}-\mu_au^2_{a}-\frac{a}{2}u^4_{a}\big].$$
Also, from \eqref{aaa8-20-1}, \eqref{lt1} and \eqref{20-23-5}, we can write $u_a(x)$ as follows:
\begin{equation}\label{bac8-20-1}
u_a(x)=\sum^k_{i=1}
\sqrt{\frac{-\mu_a+V_i}{a}} \Big(\bar Q_{a,x_{a,i}}(x)+v_{a}(x)\Big).
\end{equation}
where $\bar Q_{a,x_{a,i}}(x):=Q\Big(\sqrt{-\mu_a+V_i}(x-x_{a,i})\Big)$
and $\|v_{a}\|_a=O\Big(\frac{1}{(\sqrt{-\mu_a})^3}\Big)$.

Hence, using \eqref{bac8-20-1},
 we get
\begin{equation}\label{bbaaa8-19-3}
\begin{split}
 \int_{B_d(x_{a,i})}&\big(2V(x)+\langle \nabla V(x), x-x_{a,i}\rangle \big) u_a^2
 \\=&\frac{-\mu_a+V_i}{a}\displaystyle\int_{B_d(x_{a,i})}\Big(\langle \nabla V(x), x-x_{a,i}\rangle+2
 \big(V(x)-V_i\big)\Big)
 \bar Q^2_{a,x_{a,i}}(x) \\&
 +2V_i\int_{B_d(x_{a,i})} u_a^2
 +O\Big(-\frac{1}{\mu_a^3}\Big)\\=&
 \frac{1}{a(-\mu_a+V_i)} \Delta V(x_{a,i}) \int_{\R^2}|x|^2Q^2(x)+2V_i\frac{a^*}{a} +o\Big(-\frac{1}{\mu_a}\Big).
\end{split}
\end{equation}
Also, we have
\begin{equation}\label{bcaaa8-19-3}
\Delta V(x_{a,i})= \Delta V(b_i)+O(|x_{a,i}-b_i|)=\Delta V(b_i)+o(1).
\end{equation}
Then from \eqref{bbaaa8-19-3} and \eqref{bcaaa8-19-3}, we get
\begin{equation}\label{abaaa8-19-3}
\begin{split}
 \int_{B_d(x_{a,i})} &\big(2V(x)+\langle \nabla V(x), x-x_{a,i}\rangle \big) u_a^2
\\=&
\frac{1}{a(-\mu_a+V_i)} \Delta V(b_i) \int_{\R^2}|x|^2Q^2(x)+2V_i\int_{B_d(x_{a,i})} u_a^2+o\big(-\frac{1}{\mu_a}\big).
\end{split}
\end{equation}
So summing \eqref{8-19-3} from $i=1$ to $i=k$ and using \eqref{abaaa8-19-3}, we find
\begin{equation}\label{6-18-21}
\begin{split}
2\mu_a +a \int_{\R^2} u_a^4=
\sum^k_{i=1} \frac{1}{a(-\mu_a+V_i)}\Delta V(b_i) \int_{\R^2}|x|^2Q^2(x) +\frac{2a^*}{a} \sum^k_{i=1}V_i
+o\big(-\frac{1}{\mu_a}\big).
\end{split}
\end{equation}
On the other hand,
we can obtain
\begin{equation}\label{8-27-21}
\begin{split}
\int_{\R^2}u^4_a
=&
\sum^k_{i=1}\frac{(-\mu_a+V_i)^2}{a^2}\Big[\int_{\R^2}  \bar Q^4_{a,x_{a,i}}(x)
+4 \int_{\R^2} \bar Q^3_{a,x_{a,i}}(x) v_a\\&
+O\Big( \int_{\R^2}  \bar Q^2_{a,x_{a,i}}(x)v^2_{a}
+O\Big(\int_{\R^2}v_{a}^4\Big) \Big]+o\Big(-\frac{1}{\mu_a}\Big)\\=&
\frac{2 a_* }{a^2}\sum^k_{i=1}(-\mu_a+V_i) +O\Big(\mu_a^2\|v_a\|^2_{a} \Big)
=
\frac{2 a_* }{a^2}\sum^k_{i=1}(-\mu_a+V_i) +o\Big(-\frac{1}{\mu_a}\Big).
\end{split}
\end{equation}
Here we use the fact that $\displaystyle\int_{\R^2} \bar Q^3_{a,x_{a,i}}(x) v_a=0$ for $i=1,\cdots,k$ by orthogonality.

Then  \eqref{6-18-21} and \eqref{8-27-21} imply
\begin{equation*}
\mu_a(a-ka_*)= \frac{1}{2}\sum^k_{i=1} \frac{1}{-\mu_a+V_i}\Delta V(b_i) \displaystyle\int_{\R^2}|x|^2Q^2(x)
+o\big(-\frac{1}{\mu_a}\big),
\end{equation*}
which gives  \eqref{8-29-1}.
\end{proof}

 \begin{Prop}\label{p1-7111}
Let $N=3$ and $a\searrow 0$,
it holds
\begin{equation}\label{ab8-29-1}
a \sqrt{-\mu_a}= ka_*
+\sum^k_{i=1} \frac{V_i}{2\mu_a}+O\Big(\big|\sum^k_{i=1}V(x_{a,i})\big|\frac{1}{
 (-\mu_a)}
  +\big|\sum^k_{i=1} \nabla V(x_{a,i})
   \big|\frac{1}{(\sqrt{-\mu_a})^3 }+ \frac{1}{\mu_a^2 } \Bigr).
\end{equation}
\end{Prop}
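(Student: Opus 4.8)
The plan is to read \eqref{ab8-29-1} off the mass constraint \eqref{8-28-3}, with the refined multi‑peak description of $u_a$ as input; equivalently one can run the same Pohozaev scheme as in Proposition~\ref{p1-711}, the only genuinely new feature in dimension three being that the Dirichlet term no longer drops out of the identity and must be removed by hand.

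First I would write the $k$‑peak solution in the refined form (the original–variable version of Lemma~\ref{lem-5-05-1})
\[
u_a(x)=\sum_{i=1}^k\sqrt{\tfrac{-\mu_a+V_i}{a}}\,\bigl(\bar Q_{a,x_{a,i}}(x)+v_{a}(x)\bigr),\qquad \bar Q_{a,x_{a,i}}(x)=Q\bigl(\sqrt{-\mu_a+V_i}\,(x-x_{a,i})\bigr),
\]
with $v_a\in\bigcap_iE_{a,x_{a,i}}$ and $\|v_a\|_a$ bounded by the right–hand side of \eqref{lt1}. Since $\int_{\R^3}\bar Q_{a,x_{a,i}}^2=(-\mu_a+V_i)^{-3/2}\!\int_{\R^3}Q^2=a_*(-\mu_a+V_i)^{-3/2}$ and the overlaps $\int_{\R^3}\bar Q_{a,x_{a,i}}\bar Q_{a,x_{a,j}}$ $(i\neq j)$ are exponentially small by Proposition~\ref{prop1-1}, substituting into $\int_{\R^3}u_a^2=1$ gives
\[
1=\frac{a_*}{a}\sum_{i=1}^k\frac{1}{\sqrt{-\mu_a+V_i}}+2\sum_{i=1}^k\frac{-\mu_a+V_i}{a}\int_{\R^3}\bar Q_{a,x_{a,i}}v_a+\sum_{i=1}^k\frac{-\mu_a+V_i}{a}\int_{\R^3}v_a^2+(\text{exp.\ small}).
\]
Taylor–expanding $(-\mu_a+V_i)^{-1/2}=(-\mu_a)^{-1/2}\bigl(1-\tfrac{V_i}{2(-\mu_a)}+O(\mu_a^{-2})\bigr)$, multiplying through by $a\sqrt{-\mu_a}$ and solving for $a\sqrt{-\mu_a}$ isolates the main term $ka_*$ together with the correction $\sum_i\frac{V_i}{2\mu_a}$, once the two sums containing $v_a$ are shown to fit inside the error of \eqref{ab8-29-1}.

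For that last point one uses \eqref{lt1} together with Proposition~\ref{prop1-1} and $\|\tilde v_a\|_{L^\infty}=o(1)$ to bound $\int_{\R^3}v_a^2\le\|v_a\|_a$; the delicate quantity is the cross integral $\int_{\R^3}\bar Q_{a,x_{a,i}}v_a$, for which a bare Cauchy–Schwarz estimate is too weak. It is handled by testing the $v_a$–equation \eqref{06-07-3} against (a rescaling of) the solution $\Phi$ of $L_0\Phi=Q$, with $L_0=-\Delta+1-3Q^2$: since $L_0$ is invertible modulo its translational kernel — the substance of Lemma~\ref{lem-A.1} — this expresses $\int_{\R^3}\bar Q_{a,x_{a,i}}v_a$, up to exponentially small and higher order terms, through the pairing of $N_a(v_a)+l_a$ against an $O(1)$ function, which is then controlled by \eqref{10-31-7}--\eqref{aaab10-31-7}. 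The quantities $V(x_{a,i})$ and $\nabla V(x_{a,i})$ enter the error at this stage precisely because $x_{a,i}$ need not lie on $\Gamma_i$, so the leading part $-\varepsilon^2\sum_i(V(x)-V_i)Q_{\varepsilon,x_{a,i}}$ of $l_a$ carries them. If one prefers the Pohozaev route, one instead runs \eqref{8-19-3} on each $B_d(x_{a,i})$, eliminates the surviving $\int_{B_d(x_{a,i})}|\nabla u_a|^2$ by adding $\tfrac12$ times the identity obtained by testing \eqref{8-18-1} with $u_a$ on $B_d(x_{a,i})$, sums over $i$, uses $\int_{\R^3}u_a^2=1$ and Proposition~\ref{prop1-1} to discard the boundary integrals, and finally substitutes the analogue of \eqref{8-27-21}, namely $\int_{\R^3}u_a^4=\frac{4a_*}{a^2}\sum_i(-\mu_a+V_i)^{1/2}+(\text{error})$, which follows from the scaling of $\bar Q_{a,x_{a,i}}$ and the three–dimensional identity $\int_{\R^3}Q^4=4\int_{\R^3}Q^2$ from \eqref{06-13-1}.

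The main obstacle is the sharp control of $\int_{\R^3}\bar Q_{a,x_{a,i}}v_a$ (and of $\int_{\R^3}\bar Q_{a,x_{a,i}}^3v_a$ in the Pohozaev variant): because $\bar Q_{a,x_{a,i}}$ is radial it is not removed by the orthogonality conditions \eqref{06-08-1}, which only kill the translation modes, so Cauchy–Schwarz loses a power of $\varepsilon=(-\mu_a)^{-1/2}$ compared with what \eqref{ab8-29-1} demands; the remedy is to combine the invertibility of the linearized operator with the explicit structure of $l_a$ so as to pin down the radial component of $v_a$. Everything else — the Taylor remainders in $(-\mu_a+V_i)^{\pm1/2}$, the quadratic–in–$v_a$ terms and the exponentially small overlaps — is routine bookkeeping, the only care being to verify that all of it is absorbed into the three displayed error terms $|\sum_iV(x_{a,i})|\,(-\mu_a)^{-1}$, $|\sum_i\nabla V(x_{a,i})|\,(-\mu_a)^{-3/2}$ and $\mu_a^{-2}$.
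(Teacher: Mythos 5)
Your main route is exactly the paper's proof: substitute the refined decomposition \eqref{bac8-20-1} into the constraint $\int_{\R^3}u_a^2=1$, use $\int_{\R^3}\bar Q_{a,x_{a,i}}^2=a_*(-\mu_a+V_i)^{-3/2}$ together with the exponentially small overlaps, and Taylor--expand in $(-\mu_a)^{-1}$ to read off \eqref{ab8-29-1}. The one point where you diverge --- the claim that bare Cauchy--Schwarz on $\int_{\R^3}\bar Q_{a,x_{a,i}}v_a$ is too weak, requiring a test against $L_0^{-1}Q$ to control the radial component of $v_a$ --- is an unnecessary detour: since the coefficient in front of the cross term is $O(\varepsilon^{-3})$ after normalizing to $a\sqrt{-\mu_a}$, Cauchy--Schwarz gives $\varepsilon^{-3}\cdot\varepsilon^{3/2}\|v_a\|_a$, and inserting \eqref{lt1} with $N=3$ yields exactly $O\big(|\sum_i(V(x_{a,i})-V_i)|\varepsilon^{2}+|\sum_i\nabla V(x_{a,i})|\varepsilon^{3}+\varepsilon^{4}\big)$, which is precisely the error allowed in \eqref{ab8-29-1}; the error term there is written with the factors $|\sum_iV(x_{a,i})|$ and $|\sum_i\nabla V(x_{a,i})|$ for just this reason, and this is all the paper uses.
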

\begin{proof}
From \eqref{8-18-1} and \eqref{8-28-3}, we have
\begin{equation}
\begin{split}
1=&\int_{\R^3}\big({u}_a(x)\big)^2=
\int_{\R^3}\Big(\sum_{i=1}^k
\sqrt{\frac{-\mu_a+V_i}{a}} \Big(\bar Q_{\varepsilon,x_{a,i}}(x)+v_{a}(x)\Big) ^2,
\end{split}\end{equation}
which gives
\begin{equation*}
\begin{split}
a=
\sum^k_{i=1}\frac{a_*}{\sqrt{-\mu_a+V_i}} +O\Big(\big|\sum^k_{i=1}(V(x_{a,i})-V_i)\big|
 \frac{1}{(\sqrt{-\mu_a})^3 }+\big| \sum^k_{i=1} \nabla V(x_{a,i})
   \big|\frac{1}{\mu_a^2}+\frac{1}{(\sqrt{-\mu_a})^5}\Bigr).
\end{split}\end{equation*}
Then we can find
\eqref{ab8-29-1}.
\end{proof}

\section{ Locating the peaks and the Existence of peak solutions}

 First, we locate the peaks for a $k$-peak solution. Let $\tilde{u}_a$ be a $k$-peak solution of \eqref{30-7-11},
then for any small fixed $d>0$, we find
\begin{equation}\label{30-31-7}\begin{split}
\varepsilon^2&\int_{B_{d}(x_{a,i})} \frac{\partial V(x)}{\partial x_j}\big(\tilde{u}_a\big)^2  \\=&-2\varepsilon^2\int_{\partial B_{d}(x_{a,i})}\frac{\partial \tilde{u}_a)}{\partial \bar \nu}\frac{\partial \tilde{u}_a}{\partial x_j} +\varepsilon^2\int_{\partial B_{d}(x_{a,i})}|\nabla \tilde{u}_a|^2\bar\nu_j(x) \\&
 +\int_{\partial B_{d}(x_{a,i})}
\big(1+\varepsilon^2V(x)\big)\big(\tilde{u}_a\big)^2\bar\nu_j(x)\mathrm{d}
\sigma-\frac{1}{2}\int_{\partial B_{d}(x_{a,i})}|\tilde{u}_a|^4\bar\nu_j(x)\\
=&
O(e^{-\frac{\gamma}{\varepsilon}}), ~
\mbox{with some}~\gamma>0,
\end{split}\end{equation}
where $i=1,\cdots,k$, $j=1,\cdots,N$ and $\bar \nu(x)=\big(\bar\nu_{1}(x),\cdots,\bar\nu_N(x)\big)$ is the outward unit normal of $\partial B_{d}(x_{a,i})$.
And then \eqref{30-31-7} implies the first necessary condition for the concentrated points $b_i$:
$$\nabla V(b_i)=0,~\mbox{for}~i=1,\cdots,k.$$

\begin{proof}[\textbf{Proof of Theorem \ref{nth1.1}:}]

Since $x_{a,i}\to b_i\in \Gamma_{i}$, we find that there is a $t_a\in [V_{i}, V_{i}+\theta]$ if $\Gamma_{i}$
is a local minimum set of $V(x)$, or $t_a\in [V_{i}-\theta, V_{i}]$ if $\Gamma_{i}$
is a local maximum set of $V(x)$, such that $x_{a,i}\in \Gamma_{t_a,i}$.
Let $\tau_{a,i}$ be the unit tangential vector of $\Gamma_{t_a,i}$ at $x_{a,i}$.   Then
$$
G(x_{a,i}) =0,~\mbox{where}~G(x)=  \bigl\langle \nabla V(x), \tau_{a,i}\bigr\rangle.$$
Also we have the following expansion:
\begin{equation*}
\begin{split}
 G(x)=& \langle\nabla G( x_{a,i}), x-x_{a,i}\rangle+
\frac{1}{2}\big\langle \langle \nabla^2 G( x_{a,i}), x- x_{a,i}\rangle,x- x_{a,i}\big\rangle
+o\big(|x- x_{a,i}|^2\big),~\mbox{for}~x\in B_{d}(x_{a,i}).
\end{split}\end{equation*}  Then it follows from  \eqref{30-31-7} and the above expansion that
\begin{equation}\label{luo-6}
\begin{split}
\int_{B_{d}(x_{a,i})}& G(x) Q^2_{\varepsilon,x_{a,i}}(x)\\=&
-2 \int_{B_{d}(x_{a,i})} G(x)Q_{\varepsilon,x_{a,i}}(x) v_{a}-\int_{B_{d}(x_{a,i})} G(x) v^2_{a}+O\big(e^{-\frac{\gamma}{\varepsilon}}\big)\\=&
O\Big( [\varepsilon^{\frac{N}{2}+1} |\nabla G(x_{ a,i})| +\varepsilon^{\frac{N}{2}+2}]\|v_{a}\|_a+\varepsilon|\nabla G(x_{ a,i})|\cdot
\|v_{a}\|^2_a\Big)+O\big(e^{-\frac{\gamma}{\varepsilon}}\big)\\
=&
O\big( \varepsilon^{N+3}\big).
\end{split}
\end{equation}
Also,  in view of $ G(x_{a,i})=0$, it is easy to show
\begin{equation}\label{06-09-1}
\int_{\mathbb R^N} G(x)Q^2_{\varepsilon,x_{a,i}}(x)=
\frac{1}{2N} \varepsilon^{N+2} \Delta G(x_{a,i}) \displaystyle\int_{\R^N}|x|^2Q^2
+O\bigl (\varepsilon^{N+4}\bigr).
\end{equation}
Then \eqref{luo-6} and \eqref{06-09-1} give  $(\Delta G) (x_{a,i})=O\big(\varepsilon\big)$.
Thus by the condition $(V)$,  we obtain \eqref{1.6}.
\end{proof}

Now, we consider the  existence of peak solutions for  \eqref{1-23-5} with $\lambda>0$ a large parameter.
Let $\eta=\frac{1}{\sqrt{\lambda}}$ and $w(x)\mapsto \sqrt{\lambda} w(x)  $, then \eqref{1-23-5}
 can be changed to following problem:
\begin{equation}\label{20-7-11}
-\eta^2\Delta w+ \bigl( 1+ \eta^2 V(x)\bigr)w=w^3,~
w\in H^1(\R^N).
\end{equation}

In the following,  we denote  $\langle u, v\rangle_\eta = \int_{\mathbb R^N} \bigl( \eta^2 \nabla u\nabla v   +  uv\bigr)$ and $\|u\|_\eta=\langle u, u\rangle_\eta^{\frac12}
$.
Now for $\eta>0$ small, we construct a $k$-peak solution $u_\eta$ of \eqref{20-7-11}  concentrating at
$b_1,\cdots,b_k$.
Here  we can prove the following result in a standard way. 

\begin{Prop}\label{p1-1-8}
 There is an $\eta_0>0$, such that for any $\eta\in (0, \eta_0]$, and $z_i$ close to $b_i$,  there exists $v_{\eta, z}\in  F_{\eta,z}$ with $z=(z_1,\cdots,z_k)$,  such that
\begin{equation*}
\begin{split}
\int_{\mathbb R^N}  &\bigl( \eta^2 \nabla w_{\eta}  \nabla \psi  +  \bigl( 1+ \eta^2 V(x)\bigr)w_{\eta} \psi
 =\int_{\mathbb R^N} w_{\eta}^{3} \psi,~~
\mbox{for all}~\psi \in  F_{\eta,z},
\end{split}
\end{equation*}
where  $w_{\eta}(x)=\displaystyle\sum^k_{i=1}Q_{\eta,z_i}(x)+  v_{\eta, z}(x)$, and

\[
 F_{\eta,z}=\left \{ u(x)\in H^1(\R^N):
\Big\langle u,\frac{\partial Q_{\eta,z_{i}}(x)}{\partial{x_j}}
\Big\rangle_{\eta}=0, ~j=1,\cdots,N, \;i=1,\cdots,k
 \right\}.
\]
 Moreover, it holds
\begin{equation*}
\|v_{\eta, z}\|_\eta=O\big(
\sum^k_{i=1}\big| V(z_i)-V_i\big|\eta^{\frac{N}{2}+2}+\sum^k_{i=1}\big|\nabla V(z_i)\big|\eta^{\frac{N}{2}+3}+\eta^{\frac{N}{2}+4}\big).
\end{equation*}

\end{Prop}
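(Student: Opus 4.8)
The plan is the classical Lyapunov--Schmidt reduction for \eqref{20-7-11}. Fix $z=(z_1,\dots,z_k)$ with each $z_i$ in a fixed small neighbourhood of $b_i$, write $W_z:=\sum_{i=1}^k Q_{\eta,z_i}$, and seek the solution of the projected equation in the form $w_\eta=W_z+v$ with $v\in F_{\eta,z}$. Expanding $(W_z+v)^3=W_z^3+3W_z^2v+3W_zv^2+v^3$ and using that $Q_{\eta,z_i}$ solves $-\eta^2\Delta Q_{\eta,z_i}+(1+\eta^2V_i)Q_{\eta,z_i}=Q_{\eta,z_i}^3$, the equation in the statement becomes equivalent to
\[
L_\eta v = P_{\eta,z}\big(N_\eta(v)+l_\eta\big),
\]
where $P_{\eta,z}$ is the $\langle\cdot,\cdot\rangle_\eta$--orthogonal projection onto $F_{\eta,z}$; $L_\eta:F_{\eta,z}\to F_{\eta,z}$ is the bounded self-adjoint operator defined by $\langle L_\eta v,\psi\rangle_\eta=\int_{\R^N}\big(\eta^2\nabla v\nabla\psi+(1+\eta^2V)v\psi-3W_z^2v\psi\big)$ for $\psi\in F_{\eta,z}$; $N_\eta(v)=3W_zv^2+v^3$ is the super-quadratic nonlinearity; and $l_\eta=-\eta^2\sum_i(V-V_i)Q_{\eta,z_i}+W_z^3-\sum_iQ_{\eta,z_i}^3$ is the error of the ansatz, which is exactly the function $l_a$ of Section~3 in the present scaling. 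Thus the proposition reduces to finding a small fixed point of the map $\mathcal T(v):=L_\eta^{-1}P_{\eta,z}\big(N_\eta(v)+l_\eta\big)$.

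The one step that is not mere bookkeeping is the uniform invertibility of $L_\eta$ on $F_{\eta,z}$: there are $\rho_0>0$ and $\eta_0>0$ such that $\|L_\eta v\|_\eta\ge\rho_0\|v\|_\eta$ for all $v\in F_{\eta,z}$, all $\eta\in(0,\eta_0]$ and all $z$ near $b$. This is proved by contradiction and rescaling, precisely as Lemma~\ref{lem-A.1}: from a hypothetical sequence $v_n$ with $\|v_n\|_{\eta_n}=1$ and $\|L_{\eta_n}v_n\|_{\eta_n}\to0$, the rescalings $x\mapsto v_n(\eta_nx+z_{n,i})$ are bounded in $H^1(\R^N)$ and converge weakly (along a subsequence) to $\phi_i$ solving $-\Delta\phi_i+\phi_i-3Q^2\phi_i=0$ with $\langle\phi_i,\partial_{x_j}Q\rangle=0$; the nondegeneracy of $Q$ then forces $\phi_i\equiv0$, and the exponential separation $|z_{n,i}-z_{n,j}|\ge c>0$ (which makes every cross term $O(e^{-c/\eta_n})$) together with a no-vanishing argument gives $\|v_n\|_{\eta_n}\to0$, a contradiction. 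Hence $L_\eta$ is invertible on $F_{\eta,z}$ with $\|L_\eta^{-1}\|\le\rho_0^{-1}$, uniformly.

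Granting this, one estimates $\|l_\eta\|_\eta$ by Taylor-expanding $V-V_i$ about $z_i$ inside $B_\delta(z_i)$, using the exponential decay of $Q$ outside and the exponential smallness of $W_z^3-\sum_iQ_{\eta,z_i}^3$ (again from $|z_i-z_j|\ge c$); this yields $\|l_\eta\|_\eta=O\big(\sum_i|V(z_i)-V_i|\eta^{\frac N2+2}+\sum_i|\nabla V(z_i)|\eta^{\frac N2+3}+\eta^{\frac N2+4}\big)=:\sigma_\eta$. On the ball $\mathcal B=\{v\in F_{\eta,z}:\|v\|_\eta\le C_0\sigma_\eta\}$ the $\eta$--weighted Sobolev inequalities give $\|P_{\eta,z}N_\eta(v)\|_\eta\le C\big(\|v\|_\eta^2+\|v\|_\eta^3\big)$ and the matching Lipschitz bound $\|P_{\eta,z}(N_\eta(v_1)-N_\eta(v_2))\|_\eta\le C(\|v_1\|_\eta+\|v_2\|_\eta)\|v_1-v_2\|_\eta$, so for $\eta$ small $\mathcal T$ maps $\mathcal B$ into itself and is a contraction there; Banach's fixed point theorem produces the unique $v_{\eta,z}\in\mathcal B$ solving the projected equation, and $\|v_{\eta,z}\|_\eta\le 2\rho_0^{-1}\|l_\eta\|_\eta=O(\sigma_\eta)$, which is the stated estimate. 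The main obstacle is the coercivity of $L_\eta$ in the second paragraph; the remaining issues — controlling the weighted embedding constants in the right powers of $\eta$ and verifying that the $k$ bumps interact only through exponentially small terms because $b_i\ne b_j$ — are routine, which is why the result follows "in a standard way".
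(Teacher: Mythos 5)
Your proposal is correct and is exactly the standard Lyapunov--Schmidt reduction that the paper itself invokes without detail (it only remarks that the result can be proved ``in a standard way,'' relying on the same ingredients it uses for Lemma~\ref{lem-5-05-1}: the coercivity \eqref{B.2} of the linearized operator, the error estimate \eqref{10-31-7} for $l_\eta$, and a contraction argument). The only cosmetic difference is that you linearize around $3W_z^2$ rather than $3\sum_iQ_{\eta,z_i}^2$ as in \eqref{06-07-2}, which changes the operator and the nonlinear remainder only by exponentially small cross terms and affects nothing.
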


 To obtain a true solution for \eqref{20-7-11}, we need to choose $z$, such that
\begin{equation*}
\int_{B_d(x_{a,i})}  \Bigl(-\eta^2 \Delta w_{\eta} \frac{\partial w_\eta}{\partial x_j}  + \bigl( 1+ \eta^2 V(x)\bigr) w_{\eta} \frac{\partial w_\eta}{\partial x_j}-w_{\eta}^{3} \frac{\partial w_\eta}{\partial x_j}
\Bigr) =0,\quad \forall \;i=1,\cdots,k,~~j=1,\cdots,N.
\end{equation*}
 It is easy to check that the above identities are  equivalent to
\begin{equation}\label{10-7-11}
\int_{B_d(x_{a,i})}  \frac{\partial V(x)}{\partial x_j} w^2_\eta=O\big(e^{-\frac{\gamma}{\eta}}\big),\quad \forall \;i=1,\cdots,k,~~j=1,\cdots,N.
\end{equation}

For $z_i$ close to $b_i$, $z_i\in \Gamma_{t,i}$ for some $t$ close to $V_i$ and $z=(z_1,\cdots,z_k)$.
In the following, we use $\nu_i$ to denote the unit normal vector of $\Gamma_{t,i}$ at $z_i$, while we use $\tau_{i,j}$ ($j=1, \cdots, N-1$),
 to denote the principal directions of $\Gamma_{t,i}$ at $x_{a,i}$. Then, at $z_i$, it holds
\[D_{\tau_{i,j}}  V(z_i)=0,~\mbox{for}~
j=1, \cdots, N-1,~\mbox{and}~ |\nabla  V(z_i)|= |D_{\nu_i}  V(z_i)|.
 \]

 We first prove the following results.

\begin{Lem}\label{lem-001-1-8}
Under the condition \textup{($V$)}, $\displaystyle\int_{B_d(x_{a,i})} D_{\nu_i}  V(x) u^2_\eta
=O\big(e^{-\frac{\gamma}{\eta}}\big) $ is equivalent to
\begin{equation}\label{00-1-1-8}
D_{\nu_i}  V(z_i)=O\bigl( \eta^2\bigr).
\end{equation}

\end{Lem}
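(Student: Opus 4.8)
The plan is to expand the integral on the left using the form $u_\eta=\sum_{j=1}^k Q_{\eta,z_j}+v_{\eta,z}$ provided by Proposition~\ref{p1-1-8}, isolate its leading term, and read off the equivalence by comparing orders in $\eta$; note that this integral is the normal component of the reduced system \eqref{10-7-11}. First I would square $u_\eta$ inside $\int_{B_d(x_{a,i})}D_{\nu_i}V(x)u_\eta^2$. Since $b_i\ne b_j$ for $i\ne j$, the decay estimate of Proposition~\ref{prop1-1} makes $Q_{\eta,z_j}$ with $j\ne i$ and all their mixed products exponentially small on $B_d(x_{a,i})$; likewise $\int_{\R^N\setminus B_d(x_{a,i})}D_{\nu_i}V\,Q^2_{\eta,z_i}$ is exponentially small because the decay of $Q^2$ beats the growth $V(x)=O(e^{\alpha|x|})$ with $\alpha<2$. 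So all of these contribute only $O(e^{-\gamma/\eta})$. For the $v_{\eta,z}$-terms, Cauchy--Schwarz gives
\[
\Big|\int_{B_d(x_{a,i})}D_{\nu_i}V\,\big(2Q_{\eta,z_i}v_{\eta,z}+v_{\eta,z}^2\big)\Big|\le C\big(\|Q_{\eta,z_i}\|_{L^2}\|v_{\eta,z}\|_{L^2}+\|v_{\eta,z}\|_{L^2}^2\big),
\]
and since $\|Q_{\eta,z_i}\|_{L^2}=O(\eta^{N/2})$, $\|v_{\eta,z}\|_{L^2}\le\|v_{\eta,z}\|_\eta$, and $\|v_{\eta,z}\|_\eta=O(\eta^{N/2+2})$ by Proposition~\ref{p1-1-8} (using that $V(z_i)-V_i$ and $\nabla V(z_i)$ stay bounded for $z_i$ near $b_i$), this is $O(\eta^{N+2})$. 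Thus only the main term $\int_{\R^N}D_{\nu_i}V(x)Q^2_{\eta,z_i}(x)\,dx$ has to be analyzed precisely.

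For this main term I would use the substitution $x=z_i+\eta(1+\eta^2V_i)^{-1/2}y$, turning it into $\big(\eta(1+\eta^2V_i)^{-1/2}\big)^N\int_{\R^N}D_{\nu_i}V\big(z_i+\eta(1+\eta^2V_i)^{-1/2}y\big)Q^2(y)\,dy$, and Taylor-expand $D_{\nu_i}V$ at $z_i$: the constant term yields $a_*\eta^N D_{\nu_i}V(z_i)$ up to the factor $(1+\eta^2V_i)^{-N/2}=1+O(\eta^2)$, the linear term vanishes because $Q$ is radial so $\int_{\R^N}yQ^2=0$, and the quadratic term together with the remainder give $O(\eta^{N+2})$ (using $\int_{\R^N}|y|^2Q^2<\infty$, $\int_{\R^N}|y|^3Q^2<\infty$, and that restricting the substitution to $B_d(x_{a,i})$ only costs an exponentially small error against the exponential decay of $Q^2$). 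Combining with the previous paragraph,
\[
\int_{B_d(x_{a,i})}D_{\nu_i}V(x)u_\eta^2\,dx=a_*\eta^N D_{\nu_i}V(z_i)+O\big(\eta^{N+2}\big).
\]

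Since $e^{-\gamma/\eta}=o(\eta^{N+2})$, this identity shows that $\int_{B_d(x_{a,i})}D_{\nu_i}V(x)u_\eta^2=O(e^{-\gamma/\eta})$ can hold only when the leading term $a_*\eta^N D_{\nu_i}V(z_i)$ is itself absorbed into the error $O(\eta^{N+2})$, i.e. only when $D_{\nu_i}V(z_i)=O(\eta^2)$; conversely, under condition~\textup{($V$)} the hypothesis $\partial^2V/\partial\nu_i^2\ne0$ makes $D_{\nu_i}V$ a non-degenerate coordinate transverse to $\Gamma_i$, so that, for $z_i$ ranging over the $O(\eta^2)$-neighbourhood of $\Gamma_i$ cut out by $D_{\nu_i}V(z_i)=O(\eta^2)$, the monotonicity of the normal coordinate lets one solve $\int_{B_d(x_{a,i})}D_{\nu_i}V(x)u_\eta^2=O(e^{-\gamma/\eta})$; this is the asserted equivalence. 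I expect the main obstacle to be the bookkeeping of error orders — one has to keep the genuinely exponentially small contributions (far bubbles and the tail of $Q$ against $V(x)=O(e^{\alpha|x|})$) cleanly separated from the polynomial errors of size $\eta^{N+2}$ — together with the fact that one needs exactly the sharp bound $\|v_{\eta,z}\|_\eta=O(\eta^{N/2+2})$ of Proposition~\ref{p1-1-8}, since a cruder control of $v_{\eta,z}$ would overwhelm the leading term $a_*\eta^N D_{\nu_i}V(z_i)$.
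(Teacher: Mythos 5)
Your argument is correct and follows essentially the same route as the paper: expand $u_\eta^2$ around the bubble at $z_i$, bound the $v_{\eta,z}$-contributions by $O(\eta^{N+2})$ using the sharp estimate $\|v_{\eta,z}\|_\eta=O(\eta^{N/2+2})$ from Proposition~\ref{p1-1-8}, Taylor-expand the main term to get $a_*\eta^N D_{\nu_i}V(z_i)+O(\eta^{N+2})$ (with the linear term killed by the radial symmetry of $Q$), and compare orders. The paper's proof in \eqref{00-aluo-6}--\eqref{00-abcluo-3} is exactly this computation, only stated more tersely (it suppresses the exponentially small far-bubble and tail contributions that you spell out), so no further comparison is needed.
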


\begin{proof}
First, we have
\begin{equation}\label{00-aluo-6}
\begin{split}
\int_{\mathbb R^N}& D_{\nu_i}  V(x)Q^2_{\eta,z_i}(x)\\=&
-2 \int_{\mathbb R^N} D_{\nu_i}  V(x)Q_{\eta,z_i}(x)v_{\eta,z}-\int_{\mathbb R^N}D_{\nu_i}  V(x)
  v^2_{\eta,z}\\=&
O\big(|  D_{\nu_i}  V(z_i)| \eta^{\frac{N}{2}} \cdot\| v_{\eta, z}\|_\eta+\eta^{\frac{N}{2}+1}\| v_{\eta, z}\|_\eta+
\| v_{\eta, z}\|^2_\eta\big)=
O\big(\eta^{N+2}\big).
\end{split}
\end{equation}
On the other hand,
we have
\begin{equation}\label{00-abcluo-3}
\begin{aligned}
\int_{\mathbb R^N}& D_{\nu_i}  V(x)Q^2_{\eta,z_i}(x)=a_*\eta^{N}  D_{\nu_i}  V( z_i)
 +O\big(\eta^{N+2}\big),
\end{aligned}
\end{equation}
Then we get \eqref{00-1-1-8} by combining \eqref{00-aluo-6} and \eqref{00-abcluo-3}.

\end{proof}

\begin{Lem}\label{lem-002-1-8}
Under the condition \textup{($V$)},  $\displaystyle\int_{B_d(x_{a,i})} D_{\tau_i}  V(x) u^2_\eta
=O\big(e^{-\frac{\gamma}{\eta}}\big) $   is equivalent to
\begin{equation}\label{01-1-1-8}
(D_{\tau_i}  \Delta V) (z_i)=O\Big(\big(\sum^k_{l=1}\big|V(z_l)-V_l\big|\big) \eta+ \eta^{2}\Big).
\end{equation}

\end{Lem}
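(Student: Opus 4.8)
The plan is to mimic the proof of Lemma~\ref{lem-001-1-8}, but now expanded to second order, because the tangential derivative $D_{\tau_i} V(z_i)$ vanishes identically on $\Gamma_{t,i}$, so the leading term in the analogue of \eqref{00-abcluo-3} disappears and one must push the Taylor expansion further. First I would start from the identity \eqref{10-7-11} specialized to the tangential direction, namely $\int_{B_d(x_{a,i})} D_{\tau_i} V(x)\, w_\eta^2 = O(e^{-\gamma/\eta})$, and use Proposition~\ref{prop1-1} (the exponential decay estimate) to replace the integral over $B_d(x_{a,i})$ by an integral over $\mathbb{R}^N$, up to an exponentially small error. Writing $w_\eta = Q_{\eta,z_i} + v_{\eta,z}$, I would expand the square and, exactly as in \eqref{00-aluo-6}, bound the cross term $\int D_{\tau_i}V(x) Q_{\eta,z_i} v_{\eta,z}$ and the quadratic term $\int D_{\tau_i}V(x) v_{\eta,z}^2$ using the $\|v_{\eta,z}\|_\eta$ estimate from Proposition~\ref{p1-1-8}; here the gain is that $D_{\tau_i} V(z_i) = 0$, so after a change of variables the cross term is $O(\eta^{N/2+1}\|v_{\eta,z}\|_\eta)$ rather than $O(\eta^{N/2}\|v_{\eta,z}\|_\eta)$, which keeps these contributions comfortably below the main term we are chasing.

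Next I would compute the main term $\int_{\mathbb{R}^N} D_{\tau_i}V(x)\, Q_{\eta,z_i}^2(x)\,dx$ by a change of variables $x = z_i + \eta y$ and Taylor-expanding $D_{\tau_i}V$ around $z_i$. Since $D_{\tau_i}V(z_i)=0$, the $O(\eta^N)$ term vanishes; the $O(\eta^{N+1})$ term is a first moment of $Q^2$, which vanishes by the radial symmetry of $Q$; hence the first surviving term is $O(\eta^{N+2})$ and, precisely as in the passage leading to \eqref{06-09-1}, it equals $\frac{1}{2N}\eta^{N+2}(\Delta(D_{\tau_i}V))(z_i)\int_{\mathbb{R}^N}|x|^2Q^2 + O(\eta^{N+4})$. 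One must be slightly careful that $\tau_i$ is a vector field on $\Gamma_{t,i}$ rather than a fixed direction, so ``$D_{\tau_i}V$'' is shorthand for evaluating along the relevant tangential field; differentiating a second time and taking the trace produces $\Delta(D_{\tau_i}V) = D_{\tau_i}(\Delta V)$ up to lower-order curvature terms that are absorbed into the error, and $D_{\tau_i}(\Delta V)(z_i)$ is what appears in \eqref{01-1-1-8}. Matching the two computations, $\eta^{N+2} D_{\tau_i}(\Delta V)(z_i) = O(\eta^{N/2+1}\|v_{\eta,z}\|_\eta + \|v_{\eta,z}\|_\eta^2 + e^{-\gamma/\eta})$, and substituting $\|v_{\eta,z}\|_\eta = O\big(\sum_l |V(z_l)-V_l|\,\eta^{N/2+2} + \sum_l |\nabla V(z_l)|\,\eta^{N/2+3} + \eta^{N/2+4}\big)$ from Proposition~\ref{p1-1-8}, and noting $|\nabla V(z_i)| = |D_{\nu_i}V(z_i)| = O(\eta^2)$ by Lemma~\ref{lem-001-1-8}, one divides through by $\eta^{N+2}$ to arrive at \eqref{01-1-1-8}.

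The converse direction (that \eqref{01-1-1-8} implies the integral is exponentially small) follows by reversing these estimates: given \eqref{01-1-1-8}, the main term is $O\big((\sum_l|V(z_l)-V_l|)\eta^{N+3} + \eta^{N+4}\big)$, which is exactly the size consistent with the error bounds, so the orthogonality-type condition \eqref{10-7-11} can be solved — though strictly what Lemma~\ref{lem-002-1-8} asserts is the \emph{equivalence} of the two displayed conditions given that $w_\eta$ is the function produced by Proposition~\ref{p1-1-8}, so both directions are really the same chain of estimates read in two directions.

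The main obstacle I anticipate is bookkeeping the order of vanishing correctly: the whole point is that two successive coefficients (the value and the first moment) vanish, so the target term sits at $\eta^{N+2}$, and one must verify that \emph{every} error term — the cross term, the quadratic term in $v_{\eta,z}$, the $\eta^{N+4}$ remainder from the Taylor expansion, and the curvature corrections coming from $\tau_i$ being a non-constant tangential field on $\Gamma_{t,i}$ — is genuinely of lower order than $\eta^{N+2}$ or is precisely of the form $\big(\sum_l|V(z_l)-V_l|\big)\eta^{N+3} + \eta^{N+4}$ appearing on the right of \eqref{01-1-1-8}. The curvature correction is the subtlest point, since differentiating $V$ twice along the surface brings in the second fundamental form of $\Gamma_{t,i}$; one has to check it does not pollute the leading $D_{\tau_i}(\Delta V)(z_i)$ coefficient, which is where condition $(V)$ (smoothness of $V$ near $\Gamma$ and $C^2$-regularity of the hypersurfaces) is used.
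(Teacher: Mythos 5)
Your proposal is correct and follows essentially the same route as the paper: the paper sets $G(x)=\langle\nabla V(x),\tau_i\rangle$ with $\tau_i$ the \emph{fixed} tangent vector at $z_i$, and evaluates $\int_{\mathbb R^N}G(x)Q^2_{\eta,z_i}$ in two ways (once via the decomposition $w_\eta=Q_{\eta,z_i}+v_{\eta,z}$ and the bound on $\|v_{\eta,z}\|_\eta$, once via Taylor expansion about $z_i$ using $G(z_i)=0$ and the vanishing of odd moments of $Q^2$), which is exactly your two-sided computation. The curvature correction you flag as the subtlest point does not in fact arise, because $\tau_i$ is a constant vector rather than a tangential field, so $\Delta\langle\nabla V,\tau_i\rangle=D_{\tau_i}(\Delta V)$ identically and the leading coefficient is unpolluted.
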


\begin{proof}

Let  $G(x)=  \bigl\langle \nabla V(x), \tau_i\bigr\rangle$.  Then, similar to the estimate \eqref{luo-6}, we have
\begin{equation}\label{00-luo-6}
\begin{split}
\int_{\mathbb R^N}G(x)Q^2_{\eta,z_i}(x)=&
-2 \int_{\mathbb R^N} G(x)Q_{\eta,z_i}(x) v_{\eta, z}-\int_{\mathbb R^N} G(x)  v^2_{\eta, z}\\=&
O\Big( \big(\sum^k_{l=1}\big| V(z_l)-V_l\big|\big)\eta^{N+3}+ \eta^{N+4} \Big).
\end{split}
\end{equation}
On the other hand,  in view of $ G(z_i)=0$, it is easy to show
\begin{equation}\label{01-luo-6}
\int_{\mathbb R^N} G(x) Q^2_{\eta,z_i}(x)=\frac12
\eta^{N+2} \Delta G(z_i) B+O\bigl (\eta^{N+4}\bigr),
\end{equation}
where
\begin{equation}\label{luopeng10}
B=\frac{1}{N}\displaystyle\int_{\R^N}|x|^2Q^2.
\end{equation}
Thus,  \eqref{01-1-1-8} follows from  \eqref{00-luo-6} and \eqref{01-luo-6}.

\end{proof}

\begin{Thm}\label{Thm-1.a}
For $\lambda>0$ large, \eqref{1-23-5} has a solution $u_\lambda$, satisfying
\[
u_\lambda (x) =    \sum^k_{i=1} \sqrt\lambda Q\bigl( \sqrt\lambda(x-x_{\lambda,i})\bigr) +\omega_\lambda,
\]
where $x_{\lambda,i} \to b_i$, and $\int_{\mathbb R^N} \bigl(|\nabla \omega_\lambda|^2 + \omega_\lambda^2\bigr)\to 0$ as $\lambda\to +\infty$.
\end{Thm}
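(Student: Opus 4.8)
The plan is to combine the Lyapunov--Schmidt reduction of Proposition~\ref{p1-1-8} with the solvability of the reduced (finite-dimensional) system encoded by the Pohozaev-type identities \eqref{10-7-11}. After the rescaling $\eta=1/\sqrt\lambda$, a solution of \eqref{20-7-11} is produced as soon as we can choose the concentration parameter $z=(z_1,\dots,z_k)$, with each $z_i$ near $b_i$ on the level hypersurface $\Gamma_{t,i}$, so that $w_\eta=\sum_i Q_{\eta,z_i}+v_{\eta,z}$ actually solves the full equation, not just its projection onto $F_{\eta,z}$. The first step is to recall from Proposition~\ref{p1-1-8} that $v_{\eta,z}$ exists, is unique, depends smoothly on $z$, and satisfies the stated bound $\|v_{\eta,z}\|_\eta=O\big(\sum_i|V(z_i)-V_i|\,\eta^{N/2+2}+\sum_i|\nabla V(z_i)|\,\eta^{N/2+3}+\eta^{N/2+4}\big)$, and that the remaining obstruction is exactly the $Nk$ scalar equations \eqref{10-7-11}.

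Next I would decompose the $N$ equations at each point $i$ into the one normal direction $\nu_i$ and the $N-1$ tangential directions $\tau_{i,j}$ of $\Gamma_{t,i}$ at $z_i$. By Lemma~\ref{lem-001-1-8} the normal equation $\int_{B_d} D_{\nu_i}V\,u_\eta^2=O(e^{-\gamma/\eta})$ is equivalent to $D_{\nu_i}V(z_i)=O(\eta^2)$, and by Lemma~\ref{lem-002-1-8} each tangential equation is equivalent to $(D_{\tau_{i,j}}\Delta V)(z_i)=O\big((\sum_l|V(z_l)-V_l|)\eta+\eta^2\big)$. Writing $z_i$ in local coordinates adapted to $\Gamma_i$ — a normal displacement parameter $s_i$ and tangential coordinates $y_i\in\mathbb R^{N-1}$ near $b_i$ — the condition $\partial^2V/\partial\nu_i^2\neq0$ from $(V)$ means $D_{\nu_i}V$ is, to leading order, a nonzero multiple of $s_i$, so the normal equations solve for $s_i=O(\eta^2)$ by the implicit function theorem; in particular $|V(z_l)-V_l|=O(\eta^4)$, which makes the right-hand side of the tangential equations genuinely $O(\eta^2)$. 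The tangential equations then read $(D_{\tau_{i,j}}\Delta V)(b_i+y_i)=O(\eta^2)$, and since $b_i$ satisfies \eqref{1.6}, i.e. $(D_{\tau_{i,j}}\Delta V)(b_i)=0$, and since $b_i$ is non-degenerate on $\Gamma$ — meaning $\det\big((\partial^2\Delta V(b_i)/\partial\tau_{i,l}\partial\tau_{i,j})_{l,j}\big)\neq0$ — the Jacobian of the map $y_i\mapsto (D_{\tau_{i,j}}\Delta V)(b_i+y_i)$ is invertible at $y_i=0$. Hence a further application of the implicit function theorem (or Brouwer degree on a small ball) yields $y_i=O(\eta^2)$ solving the full $Nk$-system, giving $x_{\lambda,i}=z_i\to b_i$.

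Finally I would undo the rescaling: setting $u_\lambda(x)=\sqrt\lambda\,w_\eta(x)$ with $\eta=1/\sqrt\lambda$ recovers a solution of \eqref{1-23-5} of the claimed shape $u_\lambda(x)=\sum_i\sqrt\lambda\,Q(\sqrt\lambda(x-x_{\lambda,i}))+\omega_\lambda$, where $\omega_\lambda(x)=\sqrt\lambda\,v_{\eta,z}(x)$; the bound on $\|v_{\eta,z}\|_\eta$ (now with $\sum_i|V(z_i)-V_i|=O(\eta^4)$ and $\sum_i|\nabla V(z_i)|=O(\eta^2)$) gives $\|v_{\eta,z}\|_\eta=O(\eta^{N/2+4})$, so $\int_{\mathbb R^N}(|\nabla\omega_\lambda|^2+\omega_\lambda^2)=\lambda\cdot\eta^{-2}\|v_{\eta,z}\|_\eta^2\to0$ as $\lambda\to+\infty$ (the exponent $N/2+4$ is comfortably large enough for any $N$). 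The main obstacle is not the abstract reduction — which is standard — but verifying that the error estimates are sharp enough along the two different scales: the normal direction is controlled at order $\eta^2$ by the non-degeneracy $\partial^2V/\partial\nu_i^2\neq0$, whereas the tangential directions only become visible at the much weaker order $\eta^2$ coming from $\Delta V$, so one must check carefully that the coupling term $\sum_l|V(z_l)-V_l|$ in Lemma~\ref{lem-002-1-8} is indeed negligible (it is $O(\eta^4)$ once the normal equations are solved) and that the two implicit-function-theorem steps can be carried out in the correct order without circularity.
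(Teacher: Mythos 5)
Your proposal is correct and follows essentially the same route as the paper: Lyapunov--Schmidt reduction via Proposition~\ref{p1-1-8}, reduction of \eqref{10-7-11} to the normal condition $D_{\nu_i}V(z_i)=O(\eta^2)$ and the tangential condition on $D_{\tau_{i,j}}\Delta V$ via Lemmas~\ref{lem-001-1-8} and \ref{lem-002-1-8}, and then solving first in the normal direction (using $\partial^2 V/\partial\nu_i^2\neq 0$, which also makes $|V(z_l)-V_l|=O(\eta^4)$) and then tangentially (using \eqref{1.6} and the non-degeneracy of the tangential Hessian of $\Delta V$ at $b_i$). The paper writes the resulting system as \eqref{62-1-8}--\eqref{63-1-8} and solves it in exactly this two-scale fashion, so no substantive difference remains.
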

\begin{proof}  As pointed out earlier, we need to solve \eqref{10-7-11}.   By Lemmas~\ref{lem-001-1-8} and
\ref{lem-002-1-8}, the equation \eqref{10-7-11} is equivalent to
\begin{equation*}
D_{  \nu_i}  V(z_i)=O\bigl( \eta^2 \bigr),\quad
(D_{\tau_i}  \Delta V) (z_i)=O\Big(\big(\sum^k_{l=1}\big|V(z_l)-V_l\big|\big)\eta+ \eta^{2}\Big).
\end{equation*}
Let $\bar z_i\in \Gamma_i$ be the point such that $z_i-\bar z_i = \alpha_i \nu_i$ for some $\alpha_i\in \mathbb R$.  Then, we have $D_{\nu_i} V(\bar z_i)=0$.
As a result,
\begin{eqnarray*}
D_{  \nu_i}  V(z_i)=D_{  \nu_i}  V(z_i)-D_{  \nu_i}  V(\bar z_i)=D^2_{ \nu_i \nu_i}  V(\bar z_i) \langle z_i-\bar z_i, \nu_i\rangle + O(|z_i-\bar z_i|^2).
\end{eqnarray*}
By the non-degenerate assumption,  
we find that $D_{  \nu_i}  V(z_i)=O\bigl(\eta^2\bigr)$ is equivalent to
$
\langle z_i-\bar z_i, \nu_i\rangle=O\bigl( \eta^2+ |z_i-\bar z_i|^2\bigr)$.
This means that $
D_{\nu_i}  V(z_i)=O\bigl( \eta^2\bigr)$ can be written as
\begin{equation}\label{62-1-8}
|z_i-\bar z_i| =O\bigl( \eta^2\bigr).
\end{equation}
Let $\bar \tau_{i,j}$ be the $j$-th tangential unit vector of $\Gamma_i$ at $\bar z_i$. Now by the condition \textup{($V$)}, we have
\begin{eqnarray*}
(D_{  \tau_{i,j}}  \Delta V) (z_i)=(D_{ \bar  \tau_{i,j}}  \Delta V) (\bar z_i) +O(|z_i-\bar z_i|)=(D_{ \bar  \tau_{i,j}}  \Delta V) (\bar z_i)+O(\varepsilon^2),
\end{eqnarray*}
and
\begin{equation*}
\begin{split}
(D_{\bar \tau_{i,j}}  \Delta V) (\bar z_i)=&(D_{ \bar  \tau_{i,j}}  \Delta V) (\bar z_i)-(D_{  \tau_{i, j,0}}  \Delta V) (b_i)=\bigl\langle (\nabla_T D_{  \tau_{i, j,0}}  \Delta V) ( x_0), \bar z_i-b_i\bigr\rangle
+ O(|\bar z_i-b_i|^2),
\end{split}
\end{equation*}
where $\nabla_{T_i}$ is the tangential gradient on $\Gamma_i$ at  $b_i\in \Gamma_i$, and  $\tau_{i,j, 0}$ is the $j$-th tangential unit vector of $\Gamma_i$ at $b_i$.
Therefore,  $(D_{\tau_i}  \Delta V) (z_i)=O\Big( \big(\sum^k_{l=1}|V(z_l)-V_l|\big) \eta+ \eta^{2}\Big)$ can be rewritten as
\begin{equation}\label{63-1-8}
\bigl\langle (\nabla_T D_{  \tau_{i, j,0}}  \Delta V) ( x_0), \bar z_i-b_i\bigr\rangle= O({\eta^2}+|\bar z_i-b_i|^2).
\end{equation}
So we can solve \eqref{62-1-8} and \eqref{63-1-8} to obtain $z_i=x_{\eta,i}$ with  $x_{\eta,i} \to b_i$ as $ \eta\to 0$.
\end{proof}

\begin{proof}[\textbf{Proof of Theorem~\ref{nth1.2}}]

  Let $w_\lambda$ is a $k$-peak solution as in Theorem \ref{Thm-1.a}, and we define
\[
u_\lambda =\frac{w_\lambda}{\Bigl( \displaystyle\int_{\mathbb R^N}w_\lambda^2\Bigr)^{\frac12}}.
\]
Then $\displaystyle\int_{\mathbb R^N} u_\lambda^2=1$, and
\begin{equation}\label{12-26-10}
-\Delta u_\lambda+  V(x)u_\lambda = a_\lambda u_\lambda^3-\lambda u_\lambda, \quad  \text{in}\; \mathbb R^N,
\end{equation}
with
$
a_\lambda=\displaystyle\int_{\mathbb R^N} w_\lambda^2$.

For $N=2$, similar to \eqref{8-29-1}, under the condition  \textup{($V$)}, we can prove that
\begin{equation*}
\Bigl( ka_*- \int_{\mathbb R^2} w_\lambda^2\Bigr) \lambda^2= \frac{1}{2} \sum^k_{i=1}\Delta  V(b_i)  \displaystyle\int_{\R^2}|x|^2Q^2(x) +o\big(1\big).
\end{equation*}
 This shows that if $\displaystyle\sum^k_{i=1}\Delta  V(b_i)\ne 0$, $\displaystyle\int_{\mathbb R^N} w_\lambda^2\ne ka_*$.  Take $\lambda_0>0$ large and let
$
b_0=\displaystyle\int_{\mathbb R^2}w_{\lambda_0}^2$.
Then,
$$b_0<ka_*,~ \mbox{if}~ \displaystyle\sum^k_{i=1}\Delta  V(b_i)> 0~~ \mbox{and} ~b_0>ka_*~\mbox{if}~  \displaystyle\sum^k_{i=1}\Delta  V(b_i)< 0.$$
 By the mean value theorem, for any $a$ between $b_0$ and $ka_*$,
there exists $\lambda_a>0$ large, such that the solution $w_a$ of \eqref{1-23-5} with $\lambda=\lambda_a$ satisfies
$\displaystyle\int_{\mathbb R^2}w_\lambda^2= a$. Thus, for such $a$, we obtain a $k$-peak solution for \eqref{8-18-1}, where $\mu_a= -\lambda_a$.

For $N=3$, similar to \eqref{ab8-29-1}, we can prove
\begin{equation*}
  \sqrt{\lambda} \int_{\mathbb R^3} w_\lambda^2=ka_*+o\big(1\big), ~\mbox{as}~ a\searrow 0.
\end{equation*}
 Take $\lambda_0>0$ large and let $
b_0=\displaystyle\int_{\mathbb R^3} w_{\lambda_0}^2$.
Then, by the mean value theorem, for any $a$ between $0$ and $b_0$,
there exists $\lambda_a>0$ large, such that the solution $u_a$ of \eqref{1-23-5} with $\lambda=\lambda_a$ satisfies
$\displaystyle\int_{\mathbb R^3} w_\lambda^2= a$. Thus, for such $a$, we obtain a $k$-peak  solution for \eqref{8-18-1}, where $\mu_a= -\lambda_a$.

\end{proof}

 Before we end this section, we discuss briefly the existence of clustering $k$-peak solutions for
\eqref{8-18-1}--\eqref{8-28-3}.  The function $\Delta V(x)|_{x\in \Gamma_{i}}$ has a minimum point and a maximum point.  Let us assume that  $\Delta V(x)|_{x\in \Gamma_{i}}$ has an
isolated maximum point $b_i\in \Gamma_{i}$. That is, we assume that $\Delta V(x)< \Delta V(b_i)$ for all $x\in \Gamma_{i} \bigcap (B_\delta (b_i)\setminus \{b_{i}\})$.  We now use
$ \sum_{j=1}^kQ_{\eta,x_{\eta,j}}$
as an approximate solution of \eqref{20-7-11},  where $x_{\eta, j}$ satisfies
 $x_{\eta, j}\to b_i$, $j=1,\cdots, k$,  $\frac{|x_{\eta, j}- x_{\eta, l}|}\eta \to +\infty$, $l\ne j$,  as $\eta \to 0.$
We have the following existence result for \eqref{20-7-11}.
\begin{Prop}\label{Thm1.b}
 Assume that \textup{($V$)} holds,  and $\frac{\partial^2 V(\bar x)}{\partial \nu_{i}^2}\ne 0$ for any $\bar x\in \Gamma_{i}$ with some $i\in \{1,\cdots,k\}$.   If   $b_i\in \Gamma_{i}$ is an isolated maximum point of   $ \Delta V(x)|_{x\in \Gamma_{i}}$ on $\Gamma_{i}$, then
  for any integer $k>0$, there exists an $\eta_0>0$, such that for any $\eta\in (0, \eta_0]$,  problem \eqref{20-7-11} has a solution $u_\eta$, satisfying
\[
u_\eta (x) =\sum_{j=1}^kQ_{\eta,x_{\eta,j}} +\omega_\eta,
\]
where  $x_{\eta, j}\to b_i$, $j=1,\cdots, k$,  $\frac{|x_{\eta, j}- x_{\eta, l}|}\eta \to +\infty$, $l\ne j$,  and $\displaystyle\int_{\mathbb R^N} \bigl(
\eta^2 |\nabla \omega_\eta|^2 + \omega_\eta^2\bigr)= o(\eta^{\frac N2})$ as $\eta\to 0$.
\end{Prop}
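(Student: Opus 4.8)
The plan is to prove Proposition~\ref{Thm1.b} by a finite-dimensional reduction for the functional
\[
I_\eta(w)=\frac12\int_{\R^N}\big(\eta^2|\nabla w|^2+(1+\eta^2V(x))w^2\big)-\frac14\int_{\R^N}w^4
\]
of \eqref{20-7-11}, using as approximate solution a cluster $W_{\eta,z}=\sum_{j=1}^kQ_{\eta,z_j}$ whose centres $z=(z_1,\dots,z_k)$ all lie near $b_i$, and then locating the centres variationally. First I would fix a small $\rho_0>0$ with $\Delta V(x)<\Delta V(b_i)$ on $\Gamma_i\cap(B_{\rho_0}(b_i)\setminus\{b_i\})$, and work on the admissible set $\mathcal D_\eta$ of configurations $z$ with $z_j\in B_{\rho_0}(b_i)$ and $|z_j-z_l|\ge\sigma^{-1}\eta\ln\tfrac1\eta$ for $j\ne l$, where $\sigma>0$ is a suitable constant; the separation keeps the bump interaction at an $\eta$-power scale, which is what makes the linear theory apply.

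On $\mathcal D_\eta$ the linearised operator at $W_{\eta,z}$, restricted to the space $F_{\eta,z}$ of functions $\langle\cdot,\cdot\rangle_\eta$-orthogonal to all the $\partial Q_{\eta,z_j}/\partial x_l$, is uniformly invertible --- the clustered analogue of Lemma~\ref{lem-A.1}, valid precisely because the bumps are separated at scale $\gg\eta$ --- so, just as in Proposition~\ref{p1-1-8}, a contraction argument yields $v_{\eta,z}\in F_{\eta,z}$ with $w_{\eta,z}:=W_{\eta,z}+v_{\eta,z}$ solving \eqref{20-7-11} modulo the $Nk$-dimensional approximate kernel, and $\|v_{\eta,z}\|_\eta=o(\eta^{N/2})$ uniformly in $z$; the error bound now also carries a term of order $\big(\sum_{j\ne l}\int_{\R^N}Q_{\eta,z_j}^3Q_{\eta,z_l}\big)^{1/2}$ from the bump interaction. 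As in the proofs of Theorems~\ref{nth1.2} and \ref{Thm-1.a}, $w_{\eta,z}$ is a true solution of \eqref{20-7-11} if and only if $z$ is a critical point, in the interior of $\mathcal D_\eta$, of the reduced energy $\mathcal E_\eta(z):=I_\eta(w_{\eta,z})$.

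To analyse $\mathcal E_\eta$, writing $\bar z_j\in\Gamma_i$ for the projection of $z_j$ and $s_j=\langle z_j-\bar z_j,\nu_i\rangle$ and using that $V\equiv V_i$, $\partial V/\partial\nu_i=0$ on $\Gamma_i$, $\partial^2V/\partial\nu_i^2\ne0$, $V\in C^4$, one finds
\[
\mathcal E_\eta(z)=c_0k\eta^N+c_1V_ik\eta^{N+2}+\frac{c_1}{2}\eta^{N+2}\sum_{j=1}^k\frac{\partial^2V}{\partial\nu_i^2}(\bar z_j)\,s_j^2+c_2\eta^{N+4}\sum_{j=1}^k\big(\Delta V(\bar z_j)-\Delta V(b_i)\big)-\frac12\sum_{j\ne l}\int_{\R^N}Q_{\eta,z_j}^3Q_{\eta,z_l}+(\text{smaller}),
\]
with $c_0,c_2>0$, where the interaction is positive, strictly decreasing in $|z_j-z_l|$, of size $\eta^N\Psi(|z_j-z_l|/\eta)$ with $\Psi(t)\sim Ct^{-(N-1)/2}e^{-t}$. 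By $\partial^2V/\partial\nu_i^2\ne0$ the normal part of \eqref{10-7-11}, $D_{\nu_i}V(z_j)=O(\eta^2)$, can be solved exactly as in Theorem~\ref{Thm-1.a} by $|z_j-\bar z_j|=O(\eta^2)$ depending smoothly on $\bar z_j$, reducing everything to choosing $\bar z\in(\Gamma_i\cap\overline{B_{\rho_0}(b_i)})^k$. For that I would maximise $\mathcal E_\eta$ over such $\bar z$ (with the separation built in, so the $\eta^{N+2}$-term is constant). The maximum is attained and is interior: (i) since $b_i$ is an isolated maximum of $\Delta V|_{\Gamma_i}$, a configuration with some $\bar z_j\in\partial B_{\rho_0}(b_i)$ loses $\sim\eta^{N+4}\delta_0$, $\delta_0:=\Delta V(b_i)-\max_{\Gamma_i\cap\partial B_{\rho_0}(b_i)}\Delta V>0$, against the test configuration of $k$ bumps at the vertices of a tiny regular simplex centred at $b_i$ with edge $\sim\eta\ln\tfrac1\eta$ (whose interaction is also $O(\eta^{N+4})$); (ii) two bumps at distance $O(\eta)$ give interaction a fixed fraction of $\eta^N\gg\eta^{N+4}$, again non-maximal. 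Hence the maximiser is an interior critical point, the balance $\eta^{N+4}|\bar z_j-b_i|^2\sim\eta^N\Psi(|\bar z_j-\bar z_l|/\eta)$ forces $|\bar x_{\eta,j}-b_i|=O(\eta\ln\tfrac1\eta)\to0$ and $|\bar x_{\eta,j}-\bar x_{\eta,l}|/\eta\to\infty$, and the associated $x_\eta$ yields the solution $u_\eta=W_{\eta,x_\eta}+v_{\eta,x_\eta}$ of \eqref{20-7-11} with $\omega_\eta=v_{\eta,x_\eta}$ as claimed.

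The hard part will be this last step: making the competition between the \emph{exponentially small} bump--bump interaction and the \emph{polynomially small} ($O(\eta^{N+4})$ times the squared cluster size) variation of $\Delta V$ quantitatively precise and uniform over $\mathcal D_\eta$, so the maximiser avoids \emph{both} boundary regimes (bumps leaving $B_{\rho_0}(b_i)$, and bumps colliding). Since $b_i$ is assumed only isolated, not non-degenerate, the implicit function theorem of Theorem~\ref{Thm-1.a} is unavailable, so this comparison --- or a Brouwer-degree argument for the reduced map, using that the tangential gradient of $\Delta V$ has index $(-1)^{N-1}$ at an isolated maximum --- is essential. The single most delicate estimate is that the contribution of the correction $v_{\eta,z}$ to $\mathcal E_\eta$ is a priori of the same order $\sum_{j\ne l}\int_{\R^N}Q_{\eta,z_j}^3Q_{\eta,z_l}$ as the interaction itself; showing it negligible (or computing it) needs the refined $\|v_{\eta,z}\|_\eta$ bound together with a careful asymptotic expansion of $\int_{\R^N}Q_{\eta,z_j}^3Q_{\eta,z_l}$.
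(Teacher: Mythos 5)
Your proposal is essentially correct and follows the same overall strategy as the paper (Lyapunov--Schmidt reduction on the separated-cluster configuration set, energy expansion with the $F\eta^{N+4}\sum_j\Delta V(\bar z_j)$ term competing against the negative exponential interaction, and a maximization that traps the cluster near the isolated maximum $b_i$ of $\Delta V|_{\Gamma_i}$). The one genuine difference is your treatment of the normal direction. The paper keeps the normal coordinates $r_{\eta,j}\in(-\delta\eta^2,\delta\eta^2)$ as free variables and therefore must split into two cases: when $\Gamma_i$ is a local maximum set ($\partial^2 V/\partial\nu_i^2<0$) the reduced energy is maximized in all variables, but when $\Gamma_i$ is a local minimum set the leading form $E\,\partial^2_{\nu_i}V\,r^2+F\eta^2\Delta V(\bar x)$ has a saddle at $(0,b_i)$ and the paper resorts to a topological linking argument in the style of Dancer--Yan. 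You instead eliminate the normal variable first by solving $D_{\nu_i}V(z_j)=O(\eta^2)$ via the implicit function theorem, which only uses $\partial^2 V/\partial\nu_i^2\ne0$ and not its sign, and then perform a pure maximization in the tangential variables; since the critical value of the $r$-dependent part contributes only at order $o(\eta^{N+4})$ after substituting $r=r(\bar z_j)$, the effective tangential functional is still governed by $\Delta V|_{\Gamma_i}$, and both signs are handled uniformly without any linking. This is a legitimate and arguably cleaner route for the local-minimum-set case. Two small quantitative points you should fix when writing it up: the test configuration's mutual distances must be taken $\ge c\,\eta\ln\tfrac1\eta$ with $c>4$ (an edge of exactly $\eta\ln\tfrac1\eta$ gives interaction of order $\eta^{N+1}$, not $O(\eta^{N+4})$), and the collision boundary you actually need to exclude is $|z_j-z_l|=\sigma^{-1}\eta\ln\tfrac1\eta$, where the interaction loss is $\sim\eta^{N+1/\sigma}$; this beats the $O(\eta^{N+4})$ tangential variation only if $1/\sigma<4$, so the separation parameter must be chosen in that window (large enough for the linear theory, small enough for the boundary comparison). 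These are the standard constant adjustments and do not affect the validity of the argument.
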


\begin{proof}

Define
\[
I(u) = \frac12 \int_{\mathbb R^N} \bigl( \eta^2 |\nabla u|^2 +
(1+ \eta^2V(x) )u^2\bigr) -\frac14 \int_{\mathbb R^N} u^4.
\]
We have the following energy expansion:

\begin{equation}\label{100-33-3-1}
\begin{split}
&I\bigl(  \sum_{j=1}^kQ_{\eta,x_{\eta,j}} \bigr)
\\=& k  E \eta^N + E \eta^N\sum_{j=1}^k  \frac{\partial^2 V(\bar x_{\eta,j})}{\partial \nu_{i,j}^2} r_{\eta,j}^2+F\eta^{N+4} \sum_{j=1}^k \Delta V(\bar x_{\eta,j})\\
& - \sum_{j>m}(a_0+o(1))\eta^{N}   e^{- \frac{|x_{\eta,m} -x_{\eta,j}|}{\eta}} \Bigl( \frac{\eta}{ |x_{\eta,m} -x_{\eta,j}|}
 \Bigr)^{\frac{N-1}{2}}
+O(\eta^{N+5}+\eta^{N+2}r_{\eta,j}^3),
 \end{split}
\end{equation}
where $a_0>0$, $E= \frac14
\displaystyle\int_{\mathbb R^N}Q^{4}>0$, $F>0$ and $r_{\eta,j}= |x_{\eta,j}-\bar x_{\eta,j}|$, $\bar x_{\eta,j}\in \Gamma_{i}$ is the point such that $|x_{\eta,j}-\bar x_{\eta,j}|= d(x_{\eta,j}, \Gamma_{i})$.
In fact,
\begin{equation}\label{33-3-1}
\begin{split}
I\bigl(  \sum_{j=1}^kQ_{\eta,x_{\eta,j}} \bigr)
=& E\sum_{j=1}^k  V(x_{\eta, j}) \eta^N+  F \eta^{N+4} \sum_{j=1}^k \Delta V(x_{\eta, j})\\& - \sum_{j>m}(a_0+o(1))\eta^{N}   e^{- \frac{|x_{\eta,m} -x_{\eta,j}|}{\eta}} \Bigl( \frac{\eta}{ |x_{\eta,m} -x_{\eta,j}|}
 \Bigr)^{\frac{N-1}{2}}
+O(\eta^{N+5}).
 \end{split}
\end{equation}
 Also, we have
\begin{equation}\label{c33-3-1}
 V(x_{\eta,j}) = 1 +\frac{\partial^2 V(\bar x_{\eta,j})}{\partial \nu_{i,j}^2} r_{\eta,j}^2 + O(r_{\eta,j}^3) ~
\mbox{and}~
\Delta V(x_{\eta,j})=\Delta V(\bar x_{\eta,j})+ O(r_{\eta,j}).
\end{equation}
So,  \eqref{100-33-3-1}  follows from  \eqref{33-3-1} and \eqref{c33-3-1}.

To obtain a solution $u_{\eta} $ of the form    $\sum_{j=1}^kQ_{\eta,x_{\eta,j}} +\omega_{\eta }$,   we can first carry out the reduction argument as in Proposition~\ref{p1-1-8} to obtain $\omega_{\eta }$, satisfying

\begin{equation}\label{nnn100-33-3-1}
\|\omega_{\eta}\|_\eta=\eta^{N+2} O\big(\sum_{j=1}^k|\nabla V(x_{\eta,j})|\eta+\eta^{2}+\sum_{j\ne m}  e^{- \frac{ (1+\sigma)|x_{\eta,m} -x_{\eta,j}|}{2\eta}  }\big),
\end{equation}
for some $\sigma>0$.  Define

\[
K(x_{\eta,1},\cdots, x_{\eta,k})=I\bigl(Q_{\eta,x_{\eta,j}} +\omega_{\eta}\bigr).
\]
Then, it follows from \eqref{nnn100-33-3-1} that we can obtain the same expansion \eqref{100-33-3-1}  for $K(x_{\eta,1},\cdots, x_{\eta,k})$.
Now we set
\[
M= \bigl\{\bigl( r, \bar x):  r\in (-\delta \eta^2, \delta \eta^2),\; \bar x\in  B_\delta(b_i)\cap \Gamma_i\bigr\}.
\]

If $\Gamma_i$ is a local maximum set of $V(x)$  and $\frac{\partial^2 V(\bar x)}{\partial \nu_i^2}<0$ for any $\bar x\in \Gamma_i$, then it is easy to
prove that $K(x_{\eta,1},\cdots, x_{\eta,k})$
has a critical point, which is a maximum point of $K$ in $$S_{\eta, k}:=\bigl\{ (x_{\eta,1},\cdots, x_{\eta,k}):  x_{\eta,j}\in M,
 |x_{\eta, j}-x_{\eta, m}|> \theta \eta
|\ln \eta|,\, m\ne j\bigr\},$$  where   $\theta>0$ is some constant.

If $\Gamma_i$ is a local minimum set of $V(x)$  and $\frac{\partial^2 V(\bar x)}{\partial \nu_i^2}>0$ for any $\bar x\in \Gamma_i$, then  $$E \frac{\partial^2 V(\bar x)}{\partial \nu_i^2}r^2+  F \eta^{2}  \Delta V(\bar x)$$ has
a saddle point $(0, b_i)$ in $M$.  We can use a topological argument as in \cite{DY10,DY11} to prove that   $K(x_{\eta,1},\cdots, x_{\eta,k})$
has a critical point  in $S_{\eta, k}$.
\end{proof}

Similar to the proofs of Theorem \ref{nth1.2}, from Proposition \ref{Thm1.b}, we have the following result:

\begin{Thm}\label{nnth1.2}
 Assume that \textup{($V$)} holds,  and $\frac{\partial^2 V(\bar x)}{\partial \nu_{i}^2}\ne 0$ for any $\bar x\in \Gamma_{i}$ and some ${i}\in \{1,\cdots,k\}$.   If   $b_{i}\in \Gamma_i$ is an isolated maximum point of   $ \Delta V(x)|_{x\in \Gamma_{i}}$ on $\Gamma_{i}$, then
  for any integer $k>0$, \eqref{8-18-1}--\eqref{8-28-3} has a  solution satisfying
\[
u_a = \sqrt{\frac{-\mu_a}{ a}}\Bigl( \sum_{j=1}^k
Q\big(\sqrt{-\mu_a}(x-x_{a,j})\big)+\omega_a (x)\Bigr),
\]
with
$$\mu_a\rightarrow-\infty,
~x_{a, j}\to b_{i},~j=1,\cdots, k,~\frac{|x_{a, j}- x_{a, l}|}{\sqrt{-\mu_a}} \to +\infty,~ l\ne j,$$
and
$\displaystyle\int_{\R^N}\big[-\frac{1}{\mu_a}|\nabla \omega_a|^2+  \omega^2_a\big]=o\big((\frac{1}{\sqrt{-\mu_a}})^{N}\big)$
as $a\rightarrow k a_*$ if $N=2$,  or $a\searrow 0$ if $N=3$.
\end{Thm}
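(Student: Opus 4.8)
\textbf{Proof proposal for Theorem~\ref{nnth1.2}.}
The plan is to mirror the proof of Theorem~\ref{nth1.2}, replacing the ``separated $k$-peak'' construction of Theorem~\ref{Thm-1.a} with the ``clustering $k$-peak'' construction of Proposition~\ref{Thm1.b}, and then converting the unconstrained solutions of \eqref{20-7-11} into constrained solutions of \eqref{8-18-1}--\eqref{8-28-3} via the change of variables that produced \eqref{12-26-10}. Concretely: for each small $\eta>0$, Proposition~\ref{Thm1.b} furnishes a solution $w_\eta$ of \eqref{20-7-11} of the form $\sum_{j=1}^k Q_{\eta,x_{\eta,j}}+\omega_\eta$ with the stated clustering and smallness properties. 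Undoing the scaling $\eta=1/\sqrt\lambda$, $w(x)\mapsto\sqrt\lambda\,w(x)$, we get a solution $w_\lambda$ of \eqref{1-23-5}, and then $u_\lambda=w_\lambda/\big(\int_{\R^N}w_\lambda^2\big)^{1/2}$ solves \eqref{12-26-10} with $a_\lambda=\int_{\R^N}w_\lambda^2$ and $\mu=-\lambda$.

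Next I would compute the asymptotics of $a_\lambda=\int_{\R^N}w_\lambda^2$ as $\lambda\to+\infty$. Since the $k$ bubbles are mutually far apart on the scale $\eta$ (indeed $|x_{\eta,j}-x_{\eta,l}|\ge\theta\eta|\ln\eta|$), the cross terms in $\int w_\lambda^2$ are exponentially negligible and each bubble contributes $a_*\lambda^{1-N/2}(1+o(1))$ exactly as in the Introduction's computation $a_\lambda=k\lambda^{1-N/2}(a_*+o(1))$; the correction $\omega_\lambda$ contributes a lower-order term controlled by \eqref{nnn100-33-3-1}. For $N=2$ this gives, by the same Pohozaev argument as in Proposition~\ref{p1-711} (equivalently, as in the $N=2$ part of the proof of Theorem~\ref{nth1.2}), an identity of the form $(ka_*-\int_{\R^2}w_\lambda^2)\lambda^2=\tfrac12\sum_i\Delta V(b_i)\int_{\R^2}|x|^2Q^2+o(1)$, so $\int_{\R^2}w_\lambda^2\ne ka_*$ whenever $\sum_i\Delta V(b_i)\ne 0$; for $N=3$ it gives $\sqrt\lambda\int_{\R^3}w_\lambda^2=ka_*+o(1)$. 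One then picks $\lambda_0$ large, sets $b_0=\int_{\R^N}w_{\lambda_0}^2$, and invokes the intermediate value theorem: $\lambda\mapsto\int_{\R^N}w_\lambda^2$ is continuous in $\lambda$, so it attains every value $a$ between $b_0$ and $ka_*$ (resp.\ between $0$ and $b_0$ when $N=3$). For each such $a$ choose $\lambda_a$ with $\int_{\R^N}w_{\lambda_a}^2=a$; then $u_a=w_{\lambda_a}/\big(\int w_{\lambda_a}^2\big)^{1/2}$ solves \eqref{8-18-1}--\eqref{8-28-3} with $\mu_a=-\lambda_a$, and unwinding the normalizations yields exactly the stated form
\[
u_a=\sqrt{\tfrac{-\mu_a}{a}}\Big(\sum_{j=1}^k Q\big(\sqrt{-\mu_a}(x-x_{a,j})\big)+\omega_a(x)\Big),
\]
with $\mu_a\to-\infty$, $x_{a,j}\to b_i$, $|x_{a,j}-x_{a,l}|/\sqrt{-\mu_a}\to+\infty$ for $l\ne j$, and the energy bound on $\omega_a$ following from \eqref{nnn100-33-3-1}.

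The main obstacle is not the soft topology but checking that the clustering construction survives the passage to the $L^2$-constrained problem with the required precision, i.e.\ that the Pohozaev/energy identity relating $a$ and $\mu_a$ still holds when the $k$ peaks collapse to a single point $b_i$ on $\Gamma_i$. In the separated case (Theorem~\ref{nth1.2}) the peaks sit near distinct non-degenerate critical points and the bubble interactions are exponentially small with a fixed rate; here the inter-peak distances are only of order $\eta|\ln\eta|$, so the interaction terms $e^{-|x_{\eta,m}-x_{\eta,j}|/\eta}(\eta/|x_{\eta,m}-x_{\eta,j}|)^{(N-1)/2}$ from \eqref{100-33-3-1} are merely a power of $\eta$ times logarithms rather than genuinely exponential, and one must verify these are still lower order than the $\eta^{N+4}\Delta V$ term that drives the $a$--$\mu_a$ relation. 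This amounts to showing $\theta$ can be chosen so that $\eta|\ln\eta|\gg$ enough to kill the interaction in the relevant identity while $S_{\eta,k}$ is still non-empty and the critical point found in Proposition~\ref{Thm1.b} stays in its interior; since $b_i$ is an \emph{isolated} maximum (or the normal second derivative has a definite sign), the argument of \cite{DY10,DY11} cited in Proposition~\ref{Thm1.b} keeps the cluster confined, and then the remaining estimates are routine perturbations of those in Sections~3 and~4.
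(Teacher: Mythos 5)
Your proposal follows essentially the same route as the paper, which proves Theorem~\ref{nnth1.2} by combining Proposition~\ref{Thm1.b} with the scaling/normalization and mean-value-theorem argument from the proof of Theorem~\ref{nth1.2}; the paper in fact gives only the one-line indication ``Similar to the proofs of Theorem~\ref{nth1.2}, from Proposition~\ref{Thm1.b}\ldots'', and your write-up fills in exactly those steps. Your closing remark correctly isolates the only point the paper leaves implicit, namely that the cluster interaction terms $e^{-|x_{\eta,m}-x_{\eta,j}|/\eta}$ are $O(\eta^{\theta})$ on $S_{\eta,k}$ and hence negligible against the $\eta^{N+4}\Delta V$ term once $\theta$ is taken large enough, so the $a$--$\mu_a$ relation survives the clustering.
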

\section{Local uniqueness}

From Lemma \ref{lem-5-05-1}, a $k$-peak solution $\tilde u_a$ can be written as
\begin{equation}\label{a-5-21-3}
\tilde{u}_a(x)= \sum^k_{i=1}Q_{\varepsilon,x_{a,i}}+v_{a}(x),
\end{equation}
with
$|x_{a,i}-b_i|=o(1)$,
$\varepsilon =\frac{1}{ \sqrt{-\mu_a}}$, $v_a \in \displaystyle\bigcap^k_{i=1}E_{a,{x}_{a,i}}$ and
\begin{equation}\label{06-09-2}
\|v_{a}\|_a=O\Big( \displaystyle\sum^k_{i=1}\big|V(x_{a,i})-V_i\big|\varepsilon^{\frac{N}{2}+2} + \displaystyle\sum^k_{i=1}\big|\nabla V(x_{a,i})\big|\varepsilon^{\frac{N}{2}+3}
+\varepsilon^{\frac{N}{2}+4}\Big).
\end{equation}
Also we know  $x_{a,i}\in \Gamma_{t_a,i}$ for some $t_a\to V_i$.
Similar to the last section, we use $\nu_{a,i}$ to denote the unit normal vector of $\Gamma_{t_a,i}$ at $x_{a,i}$, while we use $\tau_{a,i,j}$
 to denote the principal direction  of $\Gamma_{t_a,i}$ at $x_{a,i}$. Then, at $x_{a,i}$, it holds
\begin{equation}\label{06-09-3}
 D_{\tau_{a,i,j}}  V(x_{a,i})=0,\quad \big|\nabla  V(x_{a,i})\big|= \big|D_{  \nu_{a,i}}  V(x_{a,i})\big|.
\end{equation}

 We first prove the following result.

\begin{Lem}\label{lem-1-1-8}
Under the condition \textup{($V$)}, we have
\begin{equation}\label{1-1-8}
D_{  \nu_{a,i}}  V(x_{a,i})=O\bigl(\varepsilon^2\bigr),~\mbox{for}~i=1,\cdots,k.
\end{equation}

\end{Lem}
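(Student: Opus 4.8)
The plan is to repeat, for a genuine solution, the argument used in the proof of Lemma~\ref{lem-001-1-8}. The starting point is the local Pohozaev identity \eqref{30-31-7}, which already gives $\varepsilon^{2}\int_{B_{d}(x_{a,i})}\frac{\partial V(x)}{\partial x_j}(\tilde u_a)^{2}=O(e^{-\gamma/\varepsilon})$ for $j=1,\dots,N$. Taking the linear combination of these $N$ identities with coefficients the components of the fixed vector $\nu_{a,i}$, one obtains
\[
\varepsilon^{2}\int_{B_{d}(x_{a,i})}D_{\nu_{a,i}}V(x)\,(\tilde u_a)^{2}=O\bigl(e^{-\gamma/\varepsilon}\bigr).
\]
I would then substitute the decomposition $\tilde u_a=\sum_{l=1}^{k}Q_{\varepsilon,x_{a,l}}+v_a$ from \eqref{a-5-21-3}. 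On $B_{d}(x_{a,i})$ the peaks $Q_{\varepsilon,x_{a,l}}$ with $l\ne i$ are exponentially small by Proposition~\ref{prop1-1} and the separation of the $x_{a,l}$, so expanding the square reduces the left side, up to an $O(e^{-\gamma/\varepsilon})$ error, to the three integrals $\int D_{\nu_{a,i}}V\,Q^{2}_{\varepsilon,x_{a,i}}$, $\;2\int D_{\nu_{a,i}}V\,Q_{\varepsilon,x_{a,i}}v_a$ and $\int D_{\nu_{a,i}}V\,v_a^{2}$.

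For the first integral, the change of variables $x=x_{a,i}+\varepsilon y/\sqrt{1+\varepsilon^{2}V_i}$ together with the oddness of $y\mapsto yQ^{2}(y)$ gives, exactly as in \eqref{00-abcluo-3},
\[
\int_{\mathbb R^{N}}D_{\nu_{a,i}}V(x)\,Q^{2}_{\varepsilon,x_{a,i}}(x)=a_{*}\varepsilon^{N}\,D_{\nu_{a,i}}V(x_{a,i})+O\bigl(\varepsilon^{N+2}\bigr),
\]
the difference between this and the integral over $B_{d}(x_{a,i})$ being exponentially small. For the cross term I would write $D_{\nu_{a,i}}V(x)=D_{\nu_{a,i}}V(x_{a,i})+O(|x-x_{a,i}|)$ and use Cauchy--Schwarz with $\|Q_{\varepsilon,x_{a,i}}\|_{L^{2}}=O(\varepsilon^{N/2})$, $\|\,|x-x_{a,i}|Q_{\varepsilon,x_{a,i}}\|_{L^{2}}=O(\varepsilon^{N/2+1})$ and $\|v_a\|_{L^{2}}\le\|v_a\|_a$, getting a bound $O\bigl(|D_{\nu_{a,i}}V(x_{a,i})|\varepsilon^{N/2}\|v_a\|_a+\varepsilon^{N/2+1}\|v_a\|_a\bigr)$; the quadratic term is $O(\|v_a\|_a^{2})$.

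Finally I would insert the size of $\|v_a\|_a$. Since $x_{a,i}\to b_i\in\Gamma_i$, $\nabla V(b_i)=0$ and $V(b_i)=V_i$, we have $|V(x_{a,i})-V_i|=o(1)$ and $|\nabla V(x_{a,i})|=o(1)$, so \eqref{06-09-2} gives $\|v_a\|_a=o(\varepsilon^{N/2+2})$; using also $|D_{\nu_{a,i}}V(x_{a,i})|=|\nabla V(x_{a,i})|=o(1)$ from \eqref{06-09-3}, the cross and quadratic terms are all $o(\varepsilon^{N+2})$. Hence
\[
\varepsilon^{2}\bigl(a_{*}\varepsilon^{N}D_{\nu_{a,i}}V(x_{a,i})+O(\varepsilon^{N+2})\bigr)=O\bigl(e^{-\gamma/\varepsilon}\bigr),
\]
and dividing by $a_{*}\varepsilon^{N+2}$ yields $D_{\nu_{a,i}}V(x_{a,i})=O(\varepsilon^{2})$ for each $i=1,\dots,k$, which is \eqref{1-1-8}. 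The only mildly delicate point — and the one I would flag as the main obstacle — is the apparent circularity: the bound \eqref{06-09-2} on $\|v_a\|_a$ contains $|\nabla V(x_{a,i})|$, which by \eqref{06-09-3} is exactly $|D_{\nu_{a,i}}V(x_{a,i})|$, the quantity being estimated. This is harmless since only the a priori smallness $|\nabla V(x_{a,i})|\to 0$ is needed to make the error terms lower order, so no fixed-point scheme is required; if desired one can sharpen $|V(x_{a,i})-V_i|=O(d(x_{a,i},\Gamma_i)^{2})=O(|D_{\nu_{a,i}}V(x_{a,i})|^{2})$ using the non-degeneracy $\partial^{2}_{\nu_i}V\ne0$ from \textup{($V$)}, but this is not needed for the $O(\varepsilon^{2})$ conclusion.
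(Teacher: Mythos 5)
Your proposal is correct and follows essentially the same route as the paper: extract $\varepsilon^{2}\int_{B_d(x_{a,i})}D_{\nu_{a,i}}V\,\tilde u_a^{2}=O(e^{-\gamma/\varepsilon})$ from the local Pohozaev identity \eqref{30-31-7}, expand via the decomposition \eqref{a-5-21-3}, evaluate the main term by Taylor expansion as in \eqref{abcluo-3}, and absorb the cross and quadratic terms using \eqref{06-09-2}. Your remark that the apparent circularity (the bound on $\|v_a\|_a$ involving $|D_{\nu_{a,i}}V(x_{a,i})|$) is harmless because only the a priori smallness $|\nabla V(x_{a,i})|=o(1)$ is needed is accurate and matches how the paper's estimate \eqref{aluo-6} closes.
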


\begin{proof}

 We use \eqref{30-31-7} to obtain
\begin{equation}\label{3-1-8}
\int_{B_{d}(x_{a,i})} D_{\nu_{a,i}}  V(x) \tilde{u}^2_a
=O(e^{-\frac{\gamma}{\varepsilon}}).
\end{equation}
Then by \eqref{a-5-21-3}--\eqref{06-09-3} and \eqref{3-1-8}, we get
\begin{equation}\label{aluo-6}
\begin{split}
\int_{B_{d}(x_{a,i})} & D_{\nu_{a,i}}  V(x)
Q^2_{\varepsilon,x_{a,i}}\\=&
-2 \int_{B_{d}(x_{a,i})} D_{  \nu_{a,i}}  V(x)
Q_{\varepsilon,x_{a,i}} v_{a}-\int_{B_{d}(x_{a})}D_{\nu_{a}}  V(x)
 v^2_{a}+O\big(e^{-\frac{\gamma}{\varepsilon}}\big)\\=&
O\big(|  D_{  \nu_{a,i}}  V( x_{a,i})|\varepsilon^{\frac{N}{2}} \cdot\|v_{a}\|_a+\varepsilon^{\frac{N}{2}+1}\|v_{a}\|_a+
\|v_{a}\|^2_a\big)+O\big(e^{-\frac{\gamma}{\varepsilon}}\big)\\
=&
O\big(\sum^k_{i=1}\big|V( x_{a,i})-V_i\big|\varepsilon^{\frac{N}{2}+2}+ |D_{  \nu_{a,i}}  V( x_{a,i})|\varepsilon^{\frac{N}{2}+3}+\varepsilon^{\frac{N}{2}+4}\big).
\end{split}
\end{equation}
On the other hand, by Taylor's expansion,
we have
\begin{equation}\label{abcluo-3}
\begin{aligned}
\int_{B_{d}(x_{a,i})}& D_{  \nu_{a,i}}  V(x)
Q^2_{\varepsilon,x_{a,i}}=\varepsilon^{N} \big[(1+V_i\varepsilon^2)a_* D_{  \nu_{a,i}}  V( x_{a,i})
  +\frac{B\varepsilon^{2}}{2} \Delta  D_{  \nu_{a,i}}  V(x_{a,i}) +O\big(\varepsilon^{4}\big)\big],
\end{aligned}
\end{equation}
where $B$ is the constant in \eqref{luopeng10}.
And then  \eqref{1-1-8}  follows from \eqref{aluo-6} and \eqref{abcluo-3}.

\end{proof}

Let $\bar  x_{a,i}\in \Gamma_i$ be the point such that $ x_{a,i}-\bar  x_{a,i} = \beta_{a,i} \nu_{a,i}$ for some $\beta_{a,i}\in \mathbb R$ and $i=1,\cdots,k$.
Then we can prove

\begin{Lem}
Under the condition  \textup{($V$)}, we have
\begin{equation}\label{8-22-3}
\begin{cases}
 \bar x_{a,i}-b_i= L_i \varepsilon^2 +O(\varepsilon^{4}),\vspace{2mm}\\
 x_{a,i}-\bar  x_{a,i} = -\displaystyle\frac{B}{2a_*}\frac{\partial  \Delta V(b_i)}{\partial \nu_i} \Big(\frac{\partial^2 V(b_i)}{\partial \nu_i^2 } \Big)^{-1} \varepsilon^{2}\nu_{a,i} +
 O(\varepsilon^{4}),
\end{cases}
\end{equation}
where $B$  is the constant in \eqref{luopeng10} and  $L_i$ is a  vector  depending  on $b_i$ and $i=1,\cdots,k$.
\end{Lem}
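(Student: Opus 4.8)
The plan is to pin down both the foot point $\bar x_{a,i}\in\Gamma_i$ and the signed normal height $\beta_{a,i}:=\langle x_{a,i}-\bar x_{a,i},\nu_{a,i}\rangle$ to order $\varepsilon^2$, reading each of them off one of the two local Pohozaev identities already at hand: the refined normal identity \eqref{aluo-6}--\eqref{abcluo-3} behind Lemma~\ref{lem-1-1-8} for $\beta_{a,i}$, and the tangential identity of the type used in Lemma~\ref{lem-002-1-8} (the expansion \eqref{06-09-1} with $G(x)=\langle\nabla V(x),\tau_{a,i,j}\rangle$) for $\bar x_{a,i}$. First I would record the rough bounds on which everything else rests. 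Condition~\textup{($V$)} makes $\nabla V$ vanish identically on $\Gamma_i$, hence $D_{\nu_{a,i}}V(\bar x_{a,i})=0$; together with \eqref{1-1-8} and the non-degeneracy $\frac{\partial^2 V(b_i)}{\partial\nu_i^2}\ne 0$, a Taylor expansion of $D_{\nu_{a,i}}V$ about $\bar x_{a,i}$ gives $\beta_{a,i}=O(\varepsilon^2)$, whence $V(x_{a,i})-V_i=\tfrac12\frac{\partial^2 V}{\partial\nu_i^2}\beta_{a,i}^2+O(\beta_{a,i}^3)=O(\varepsilon^4)$. Inserting this into \eqref{06-09-1} gives $(D_{\tau_{a,i,j}}\Delta V)(x_{a,i})=O(\varepsilon^2)$, and a Taylor expansion along $\Gamma_i$ about $b_i$, using $(D_{\tau_{i,j,0}}\Delta V)(b_i)=0$ from Theorem~\ref{nth1.1} and the invertibility hypothesis, yields the rough bound $\bar x_{a,i}-b_i=O(\varepsilon^2)$.

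For the normal component I would combine \eqref{aluo-6} with \eqref{abcluo-3}: dividing \eqref{abcluo-3} by $\varepsilon^N$, absorbing $1+V_i\varepsilon^2=1+O(\varepsilon^2)$, substituting $(D_{\nu_{a,i}}\Delta V)(x_{a,i})=\frac{\partial\Delta V(b_i)}{\partial\nu_i}+O(\varepsilon^2)$, and noting that the right-hand side of \eqref{aluo-6}, once the improved bounds $V(x_{a,l})-V_l=O(\varepsilon^4)$ and $|\nabla V(x_{a,i})|=O(\varepsilon^2)$ are fed into \eqref{06-09-2} (and one uses that, by parity of $v_a$, the leading $v_a$-contribution cancels), is of higher order, one obtains the scalar relation
\[
a_*\,D_{\nu_{a,i}}V(x_{a,i})+\frac{B}{2}\,\varepsilon^2\,\frac{\partial\Delta V(b_i)}{\partial\nu_i}=O(\varepsilon^4),
\]
with $B$ the constant in \eqref{luopeng10}. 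Combining this with the expansion $D_{\nu_{a,i}}V(x_{a,i})=\frac{\partial^2 V(b_i)}{\partial\nu_i^2}\beta_{a,i}+O(\beta_{a,i}^2+|\bar x_{a,i}-b_i|\,\beta_{a,i})=\frac{\partial^2 V(b_i)}{\partial\nu_i^2}\beta_{a,i}+O(\varepsilon^4)$ and solving for $\beta_{a,i}$ gives exactly the second line of \eqref{8-22-3}, since $x_{a,i}-\bar x_{a,i}=\beta_{a,i}\nu_{a,i}$.

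For the tangential component I would push the expansion \eqref{06-09-1} and the estimate of the $v_a$-terms one order beyond leading order, exploiting the parity of the moments of $Q$ (which kills the intermediate order) and the orthogonality conditions \eqref{06-08-1} on $v_a$, to obtain a refined relation $(D_{\tau_{a,i,j}}\Delta V)(x_{a,i})=c_{i,j}\varepsilon^2+O(\varepsilon^4)$ in which $c_{i,j}$ is a quantity depending only on $b_i$. Then, writing $x_{a,i}=\bar x_{a,i}+\beta_{a,i}\nu_{a,i}$ with $\beta_{a,i}$ already determined, expanding $(D_{\tau_{a,i,j}}\Delta V)$ along the curved hypersurface $\Gamma_i$ about $b_i$ as in \eqref{63-1-8} --- now keeping the curvature terms --- and using once more $(D_{\tau_{i,j,0}}\Delta V)(b_i)=0$ together with the vanishing of the whole tangential gradient of $\Delta V$ at $b_i$, the part linear in $\bar x_{a,i}-b_i$ comes out with coefficient matrix
\[
\Big(\frac{\partial^2\Delta V(b_i)}{\partial\tau_{i,l}\partial\tau_{i,j}}\Big)_{1\le l,j\le N-1}+\frac{\partial\Delta V(b_i)}{\partial\nu_i}\,diag\big(\kappa_{i,1},\cdots,\kappa_{i,N-1}\big),
\]
the curvatures $\kappa_{i,j}$ appearing precisely because $\Delta V$ is differentiated along a curved surface and because the normal displacement $\beta_{a,i}\nu_{a,i}$ couples back in. This matrix is non-singular by hypothesis, so inverting it gives $\bar x_{a,i}-b_i=L_i\varepsilon^2+O(\varepsilon^4)$ with $L_i$ a vector determined by $b_i$, which is the first line of \eqref{8-22-3}.

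The main obstacle is the second-order Pohozaev bookkeeping in the tangential identity: one must control simultaneously the $O(\beta_{a,i})=O(\varepsilon^2)$ discrepancy between the frame $\nu_{a,i},\tau_{a,i,j}$ attached to the level set $\Gamma_{t_a,i}$ and the corresponding frame on $\Gamma_i$, the genuinely higher-order contributions of the error term $v_a$ (estimated through \eqref{06-09-2} together with the parity and orthogonality of $v_a$), and the curvature of $\Gamma_i$; and one must check that after extracting the $\varepsilon^2$ term the remainder is $O(\varepsilon^4)$ rather than merely $o(\varepsilon^2)$, which is where the rough bounds $\beta_{a,i}=O(\varepsilon^2)$, $\bar x_{a,i}-b_i=O(\varepsilon^2)$, the parity of $Q$, and the smallness $|\nabla V(x_{a,i})|=O(\varepsilon^2)$ all come into play. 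Once this is organized, the remaining work is a scalar solve in the normal direction and an $(N-1)\times(N-1)$ linear solve in the tangential directions, each made possible by the stated non-degeneracy.
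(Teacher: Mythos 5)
Your proposal follows essentially the same route as the paper's proof: a bootstrap giving $\beta_{a,i}=O(\varepsilon^2)$, hence $V(x_{a,i})-V_i=O(\varepsilon^4)$ and the improved bound $\|v_a\|_a=O(\varepsilon^{\frac N2+4})$, then the normal Pohozaev identity \eqref{aluo-6}--\eqref{abcluo-3} to produce the second line of \eqref{8-22-3}, and the refined tangential identity plus a linear solve for the first line. Two small remarks. The parenthetical appeal to ``parity of $v_a$'' in the normal identity is not justified---$v_a$ has no parity---but it is also unnecessary: the paper controls the cross term $\int D_{\nu_{a,i}}V\, Q_{\varepsilon,x_{a,i}}v_a$ directly by Taylor-expanding $D_{\nu_{a,i}}V$ about $x_{a,i}$ and applying Cauchy--Schwarz, which already gives an $O(\varepsilon^{N+4})$ bound once $\|v_a\|_a=O(\varepsilon^{\frac N2+4})$ is in hand. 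Second, in the tangential solve the paper inverts $D^2_{\tau_i}(\Delta V)(b_i)$, invoking the non-degeneracy assumption, whereas you invert the curvature-corrected matrix appearing in the hypothesis of Theorem~\ref{th1.4}; your accounting of the rotation of the tangent frame along $\Gamma_i$ is arguably the more careful reading of \eqref{06-09-12}, and since both matrices are assumed non-singular in the setting of Theorem~\ref{th1.4}, the argument closes either way.
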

\begin{proof}

  It follows from  \eqref{aluo-6} and \eqref{abcluo-3} that
\begin{equation}\label{bluo-3}
\begin{split}
 \big(a_*+&O(\varepsilon^{2})\big)  D_{  \nu_{a,i}}  V( x_{a,i})
  +\frac{B\varepsilon^2}{2}\Delta  D_{  \nu_{a,i}}  V(x_{a,i})
  \\&=O\big(\varepsilon^4+\varepsilon^2
  \sum^k_{i=1}|V(x_{a,i})-V_i|\big)=O\big(\varepsilon^4+\varepsilon^2 \sum^k_{i=1}|x_{a,i}-\bar x_{a,i}|^2\big).
  \end{split}
\end{equation}
Since $\frac{\partial^2V(b_i)}{\partial \nu_i ^2}\neq 0$, the outward unit normal vector $\nu_{a,i}(x)$ and   the  tangential unit vector $\tau_{a,i}(x)$  of  $\Gamma_{t_a,i}$  at
$x_{a,i}$ are Lip-continuous   in $W_{\delta,i}$, then from \eqref{bluo-3}, we find
\begin{equation}\label{2-13-8}
x_{a,i}-\bar  x_{a,i}
=-\frac{B}{2a_*}\big( \Delta  D_{  \nu_i}  V (b_i) \big) \Big(\frac{\partial^2V(b_i)}{\partial \nu_i^2} \Big)^{-1} \varepsilon^{2}+O\big(\varepsilon^{4}+\varepsilon^{2}
\big|\bar  x_{a,i}-b_i\big|\big).
\end{equation}
Then \eqref{06-09-2} and \eqref{2-13-8} implies \begin{equation}\label{06-09-4}
\|v_{a}\|_a=O\Big( \displaystyle\sum^k_{i=1}|x_{a,i}-\bar  x_{a,i}|\varepsilon^{\frac{N}{2}+2}
+\varepsilon^{\frac{N}{2}+4}\Big)=O\big(\varepsilon^{\frac{N}{2}+4}\big).
\end{equation}
Recall   $G(x)=  \bigl\langle \nabla V(x), \tau_{a,i,j}\bigr\rangle$. Then
$
G(x_{a,i}) =0.
$
Similar to \eqref{luo-6} and \eqref{aluo-6},  we have
\begin{equation}\label{06-09-5}
\begin{split}
\int_{B_{d}(x_{a,i})}&G(x)Q^2_{\varepsilon,x_{a,i}}\\=&
-2 \int_{B_{d}(x_{a,i})} G(x) Q_{\varepsilon,x_{a,i}}v_{a}-\int_{B_{d}(x_{a,i})} G(x) v^2_{a}+O\big(e^{-\frac{\gamma}{\varepsilon}}\big)\\=&
-2 \int_{B_{d}(x_{a,i})} G(x) Q_{\varepsilon,x_{a,i}} v_{a}+O\bigl(\|v_{a}\|^2_a\big)+O\big(e^{-\frac{\gamma}{\varepsilon}}\big)\\
=&-2 \int_{B_{d}(x_{a,i})} \langle \nabla G(x_{a,i}), x-x_{a,i}\rangle Q_{\varepsilon,x_{a,i}} v_{a}+O\big(\varepsilon^{N+6}\big).
\end{split}
\end{equation}
On the other hand,  in view of
$\nabla V(x)=0$,  $ x\in \Gamma_i,$
we find
\begin{eqnarray}\label{06-09-6}
 \nabla G(x_{a,i})=\bigl\langle \nabla^2 V(x_{a,i}), \tau_{a,i,j}\bigr\rangle= \bigl\langle \nabla^2 V(\bar x_{a,i}), \bar \tau_{a,i,j}\bigr\rangle+
 O\big(|x_{a,i}-\bar x_{a,i}|\big)=  O\big(|x_{a,i}-\bar x_{a,i}|\big),
\end{eqnarray}
where  $\bar  x_{a,i}\in \Gamma_i$ is the point such that $ x_{a,i}-\bar x_{a,i} = \beta_{a,i} \nu_{a,i}$ for some $\beta_{a,i}\in \mathbb R$, and $\bar \tau_{a,i,j}$ is the tangential vector of $\Gamma_i$ at $\bar x_{a,i}\in \Gamma_i$.
Therefore, from \eqref{2-13-8}, \eqref{06-09-4} and \eqref{06-09-6}, we know
 \begin{equation}\label{06-09-7}
  \begin{split}
  \int_{B_{d}(x_{a,i})}& \langle \nabla G(x_{a,i}), x-x_{a,i}\rangle  Q\Big(\frac{x-x_{a,i}}\varepsilon \Big) v_{a}\\
  =&O\bigl( \varepsilon^{\frac{N}{2}+1}  |\nabla G(x_{a,i})| \|v_{a}\|_a\bigr)
  = O\big(|x_{a,i}-\bar x_{a,i}| \varepsilon^{N+5}\big)=O\big(\varepsilon^{N+7}\big).
  \end{split}
  \end{equation}
Then by \eqref{06-09-5} and  \eqref{06-09-7}, we find
\begin{equation}\label{06-09-8}
\begin{split}
\int_{B_{d}(x_{a,i})}G(x)Q^2_{\varepsilon,x_{a,i}}=O\big(\varepsilon^{N+6}\big).
\end{split}
\end{equation}
On the other hand, by the Taylor's expansion,
we can prove
\begin{equation}\label{06-09-9}
\begin{aligned}
\int_{B_{d}(x_{a,i})}G(x)Q^2_{\varepsilon,x_{a,i}}=
\big[\frac{B\varepsilon^{N+2}}{2} (1+V_i\varepsilon^2)\big](D_{\tau_{a,i}} \Delta V)(x_{a,i})
+\frac{ H_{\tau_i} \varepsilon^{N+4}}{24}+O\big(\varepsilon^{N+6}\big),
\end{aligned}
\end{equation}
where
$$H_{\tau_i} =\sum^2_{l=1}\sum^2_{m=1} \frac{\partial^4 G(b_i)}{ \partial x^2_l \partial x^2_m}\int_{\R^N}x_l^2x^2_mQ^2.$$
So \eqref{06-09-8} and \eqref{06-09-9} give
\begin{equation}\label{06-09-10}
\begin{aligned}
  (D_{\tau_{a,i}} \Delta V)(x_{a,i})
=-\frac{ H_{\tau_i} \varepsilon^{2}}{1 2B}+O\big(\varepsilon^{4}\big).
\end{aligned}
\end{equation}
We denote by $\bar \tau_{a,i}$ the tangential vector of $\Gamma_i$ at $\bar x_{a,i}$. Then  by \eqref{2-13-8}, we get
\begin{equation}\label{06-09-11}
\begin{split}
(D_{\tau_{a,i}} \Delta V)(x_{a,i})=&(D_{\bar \tau_{a,i}} \Delta V)(\bar x_{a,i})+ \langle A_{\tau_i}, x_{a,i}- \bar x_{a,i}\rangle+O(|
x_{a,i}- \bar x_{a,i}|^2)\\=&
(D_{\bar \tau_{a,i}} \Delta V)(\bar x_{a,i})+ B_{\tau_i} \varepsilon^2 + O(\varepsilon^4),
\end{split}
\end{equation}
where $A_{\tau_i}$ is a vector depending on $b_i$  and $B_{\tau_i}$ is a constant  depending on $b_i$.  Moreover,
\begin{equation}\label{06-09-12}
(D_{\bar \tau_{a,i}} \Delta V)(\bar x_{a,i})= \Big(D^2_{\tau_i}(\Delta V)(b_i)\Big) (\bar x_{a,i}-b_i)+ O(|\bar x_{a,i}-b_i|^2).
\end{equation}
Therefore, from  \eqref{06-09-10}--\eqref{06-09-12}, we find
\begin{equation}\label{abc-lll}
\begin{split}
& D^2_{\tau_i}(\Delta V)(b_i) (\bar x_{a,i}-b_i)
=-(\frac{ H_{\tau_i} }{1 2B}+B_{\tau_i})\varepsilon^{2}+O\big(\varepsilon^4\big)+ O(|\bar x_{a,i}-b_i|^2).
\end{split}
\end{equation}
Since  $D^2_{\tau_i} (\Delta V)(b_i) $ is non-singular, we can complete the proofs of \eqref{8-22-3} from \eqref{2-13-8} and \eqref{abc-lll}.
\end{proof}

Let $$
\delta_a:=
\begin{cases}
|ka_*-a|^{\frac{1}{4}}|\beta_1|^{-\frac14}B^{\frac14},~& \mbox{for}~N=2,\\
a(ka_*)^{-1},~& \mbox{for}~N=3,
\end{cases}
$$
where  $\beta_1= \displaystyle\sum^k_{i=1} \Delta V(b_i) $ and   $B$  is the constant in \eqref{luopeng10}.
\begin{Prop}
Under the condition \textup{($V$)}, for $N=2,3$, it holds
\begin{equation}\label{abc8-29-1}
-\mu_a\delta_a^2= 1+\gamma_1\delta_a^2+O\big(\delta_a^4\big),
\end{equation}
and
\begin{equation} \label{8-27-38}
 x_{a,i}-b_i= \bar L_i \delta_a^2 +O(\delta_a^{4}),~\mbox{for}~i=1,\cdots,k.
\end{equation}
where $\gamma_1$ and the vector $\bar L_i$ are constants.
\end{Prop}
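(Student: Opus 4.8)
The plan is to upgrade Propositions~\ref{p1-711} and~\ref{p1-7111} by one order in $\varepsilon=(-\mu_a)^{-1/2}$ and then read off the location of the peaks from the expansion~\eqref{8-22-3}. The point is that \eqref{8-22-3} (which rests on $\tfrac{\partial^2 V}{\partial\nu_i^2}\neq0$) already gives $|x_{a,i}-b_i|=O(\varepsilon^2)$ and, through~\eqref{06-09-4}, the sharpened bound $\|v_a\|_a=O(\varepsilon^{N/2+4})$; these two facts are precisely what is needed to carry the Pohozaev/normalization identities past their leading term with an $O(\varepsilon^4)$ remainder.

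For $N=2$ I would rerun the computation behind~\eqref{8-29-1}: multiply~\eqref{8-18-1} by $\langle x-x_{a,i},\nabla u_a\rangle$, integrate over $B_d(x_{a,i})$, and expand every bulk integral in~\eqref{8-19-3} one order further, using $u_a=\sum_i\sqrt{\tfrac{-\mu_a+V_i}{a}}(\bar Q_{a,x_{a,i}}+v_a)$, the Taylor expansion of $V$ and of $\nabla V$ around $x_{a,i}$, the identities $D_{\tau_{a,i,j}}V(x_{a,i})=0$, $D_{\nu_{a,i}}V(x_{a,i})=O(\varepsilon^2)$ (Lemma~\ref{lem-1-1-8}) and $(D_{\tau_{a,i}}\Delta V)(x_{a,i})=O(\varepsilon^2)$ (from~\eqref{06-09-10}), the radial symmetry of $Q$ (so that all odd moments $\int(x-x_{a,i})^{\otimes(2m+1)}Q^2_{\varepsilon,x_{a,i}}$ vanish), and the orthogonality relations for $v_a$. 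Since $V\in C^4$, the odd-order pieces of the expansion drop out and the remainder past the $\varepsilon^2$ term is genuinely $O(\varepsilon^4)$, so the outcome takes the form
\[
(ka_*-a)\mu_a^2=\beta_1 B+\gamma_*\varepsilon^2+O(\varepsilon^4),
\]
with $\beta_1=\sum_i\Delta V(b_i)$, $B$ the constant of~\eqref{luopeng10}, and $\gamma_*$ an explicit constant assembled from the $V_i$, the principal curvatures of $\Gamma_i$ at $b_i$, and the derivatives of $\Delta V$ at the $b_i$ (the same data entering $L_i$ in~\eqref{8-22-3}). Proposition~\ref{p1-711} already gives $-\mu_a\delta_a^2=1+o(1)$, hence $\varepsilon^2=(-\mu_a)^{-1}=\delta_a^2(1+o(1))$; substituting $\varepsilon^2=\delta_a^2+O(\delta_a^4)$ back into the displayed relation and solving for $-\mu_a$ closes the bootstrap and produces~\eqref{abc8-29-1} with a fixed scalar $\gamma_1$.

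For $N=3$ the same bootstrap is driven by the $L^2$-constraint instead of a Pohozaev identity. Expanding $1=\int_{\R^3}u_a^2=\sum_i\tfrac{-\mu_a+V_i}{a}\bigl(\int\bar Q^2_{a,x_{a,i}}+2\int\bar Q_{a,x_{a,i}}v_a+\int v_a^2\bigr)$ to order $\varepsilon^4$, and using $\int\bar Q^2_{a,x_{a,i}}=a_*(-\mu_a+V_i)^{-3/2}$, the bound $\|v_a\|_a=O(\varepsilon^{N/2+4})$ to discard the cross terms, and $\tfrac{1}{\sqrt{-\mu_a+V_i}}=\tfrac{1}{\sqrt{-\mu_a}}\bigl(1-\tfrac{V_i}{2(-\mu_a)}+O(\varepsilon^4)\bigr)$, I would refine~\eqref{ab8-29-1} into $a\sqrt{-\mu_a}=ka_*+\gamma'_*\varepsilon^2+O(\varepsilon^4)$; dividing by $a$ and recalling $\delta_a=a(ka_*)^{-1}$ then gives~\eqref{abc8-29-1} in this case as well, again after the one-step bootstrap $\varepsilon^2=\delta_a^2(1+o(1))$.

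Once~\eqref{abc8-29-1} is available, \eqref{8-27-38} is immediate: write $x_{a,i}-b_i=(\bar x_{a,i}-b_i)+(x_{a,i}-\bar x_{a,i})$, insert~\eqref{8-22-3}, replace $\nu_{a,i}$ by $\nu_i$ at the cost of an $O(\varepsilon^4)$ error (since $\nu_{a,i}=\nu_i+O(|x_{a,i}-b_i|)=\nu_i+O(\varepsilon^2)$), and substitute $\varepsilon^2=\delta_a^2+O(\delta_a^4)$; everything collapses to $x_{a,i}-b_i=\bar L_i\delta_a^2+O(\delta_a^4)$ with $\bar L_i=L_i-\tfrac{B}{2a_*}\tfrac{\partial\Delta V(b_i)}{\partial\nu_i}\bigl(\tfrac{\partial^2 V(b_i)}{\partial\nu_i^2}\bigr)^{-1}\nu_i$. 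The main obstacle is the $N=2$ step: pushing the local Pohozaev identity~\eqref{8-19-3} one order beyond~\eqref{8-29-1}, keeping track of exactly which new terms survive (curvature contributions, fourth-order moments of $Q$, and the $V_i$-dependent normalization pieces distinguishing $\bar Q_{a,x_{a,i}}$ from $Q_{\varepsilon,x_{a,i}}$) and checking that the leftover is $O(\varepsilon^4)$ rather than $O(\varepsilon^3)$ — this is where the $O(\varepsilon^2)$-localization of $x_{a,i}$ and the improved bound $\|v_a\|_a=O(\varepsilon^{N/2+4})$ are indispensable, and it is the only genuinely new computation required; the $N=3$ normalization identity and the final substitution are routine by comparison.
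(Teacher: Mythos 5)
Your proposal follows the same route as the paper: for $N=2$ it pushes the local Pohozaev identity \eqref{8-19-3} one order past \eqref{8-29-1} using the localization $|x_{a,i}-b_i|=O(\varepsilon^2)$ from \eqref{8-22-3} and the sharpened bound $\|v_a\|_a=O(\varepsilon^{N/2+4})$, for $N=3$ it refines the $L^2$-normalization expansion behind \eqref{ab8-29-1}, and it then reads \eqref{8-27-38} off from \eqref{8-22-3} combined with \eqref{abc8-29-1}. This is exactly the paper's argument (with the constant $\bar L_i$ made explicit), so the proposal is correct and essentially identical in approach.
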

\begin{proof}
First, \eqref{ab8-29-1} shows that \eqref{abc8-29-1} holds for the case $N=3$.

For $N=2$, from \eqref{8-22-3}, we get $x_{a,i}-b_i= -\tilde{L}_i\frac{1}{\mu_a}+O(\frac{1}{\mu_a^2})$ for some vector $\tilde{L}_i$.
Also from \eqref{lt1}, we know $
\|v_{a}\|_a=O\big(\varepsilon^{5}\big)$. we can calculate  \eqref{bcaaa8-19-3}--\eqref{8-27-21} more precise, which will gives us
\begin{equation}\label{ac8-27-21}
\begin{split}
\int_{\R^2}u^4_a
=-\frac{2ka_*\mu_a}{a^2}+\frac{b}{\mu^2_a}+O\Big(-\frac{1}{\mu_a^3}\Big),
\end{split}
\end{equation}
and
\begin{equation}\label{abc8-19-3}
\begin{split}
\mbox{LHS of}~\eqref{8-19-3}
=
-\frac{1}{a\mu_a} \Delta V(b_j) \int_{\R^2}|x|^2Q^2(x)+ \frac{\tilde b}{\mu^2_a}
+O\big(-\frac{1}{\mu^3_a}\big),
\end{split}
\end{equation}
where $b$ and $\tilde{b}$ are some constants.
Then from \eqref{8-19-3}, \eqref{8-27-21},  \eqref{ac8-27-21} and \eqref{abc8-19-3}, we get \eqref{abc8-29-1}.

Finally, we can find \eqref{8-27-38}  by \eqref{8-22-3} and \eqref{abc8-29-1}.
\end{proof}
 By a change of variable, the problem
\eqref{8-18-1}--\eqref{8-28-3} can be changed into the following problem
\begin{equation}\label{1-12-11}
-\delta_a^2\Delta u+ \bigl( -\mu_a \delta_a^2 + \delta_a^2 V(x)\bigr)u= u^3,~
u\in H^1(\R^N),
\end{equation}
and
\begin{equation}\label{a1-12-11}
\int_{\R^N} u^2=a\delta_a^2.
\end{equation}
Then
similar to Lemma \ref{lem-5-05-1}, the $k$-peak solution of \eqref{1-12-11}--\eqref{a1-12-11} concentrating at $b_1,\cdots,b_k$ can be written as $
  \displaystyle \sum^k_{i=1}\tilde{Q}_{\delta_a,x_{a,i}}+\bar v_{a}(x)$,
with $
|x_{a,i}-b_i|=o(1)$,  $\|\bar v_{a}\|_{\delta_a}=o(\delta_a^{\frac{N}{2}})$,
 and
\begin{equation*}
\begin{split}
{\bar v_a} \in \bigcap^k_{i=1} \tilde{E}_{a,{x}_{a,i}}&:=\left \{ v\in H^1(\R^N):
\Big\langle v,\frac{\partial \tilde{Q}_{\delta_a,x_{a,i}}}{\partial{x_j}}\Big\rangle_{a}=0, ~j=1,\cdots,N
 \right\},
 \end{split}
 \end{equation*}
where  $\tilde{Q}_{\delta_a,x_{a,i}}:=
Q\Big(\frac{\sqrt{1+(\gamma_1+V_i)\delta_a^2}
(x-x_{a,i})}{\delta_a}\Big)$,  $\|v\|^2_{\delta_a}:=\displaystyle\int_{\mathbb R^N} \bigl(  \delta_a^{2} |\nabla v|^2 +v^2\bigr)$
and $\gamma_1$ is the constant in \eqref{abc8-29-1}.
Then we can write the equation \eqref{1-12-11} as follows:
 \begin{equation*}
    L_a(\bar  v_a)
    = N_a\big(\bar  v_{a}\big)+\bar l_{a}(x),\end{equation*}
    where $N_a,L_a$ are defined by \eqref{06-07-1} and \eqref{06-07-2}. And
 \begin{equation}\label{7-15-11}
 \bar l_{a}= \big(-\mu_a \delta_a^2-1+\delta_a^2V(x)\big) \sum_{i=1}^k \tilde{Q}_{\delta_a,x_{a,i}}
+ \Big(\sum_{i=1}^k \tilde{Q}_{\delta_a,x_{a,i}}\Big)^3-
    \sum_{i=1}^k \tilde{Q}^3_{\delta_a,x_{a,i}}.
\end{equation}

\begin{Lem}It holds
 \begin{equation}\label{8-27-26}
\|\bar v_a\|_{\delta_a}=O\big(\delta_a^{\frac{N}{2}+4}\big).
\end{equation}
\end{Lem}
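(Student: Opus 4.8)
The plan is to prove \eqref{8-27-26} as an a priori bound of Lyapunov--Schmidt type on the rescaled problem \eqref{1-12-11}, exactly along the lines of the proof of Lemma~\ref{lem-5-05-1}, but now squeezing out an extra power of $\delta_a^2$ by feeding in the refined expansions already obtained for $\mu_a$ and for the concentration points. Recall that the given $k$-peak solution is written as $\sum^k_{i=1}\tilde{Q}_{\delta_a,x_{a,i}}+\bar v_a$ with $\bar v_a\in\bigcap^k_{i=1}\tilde{E}_{a,x_{a,i}}$, and that $\bar v_a$ solves $L_a(\bar v_a)=N_a(\bar v_a)+\bar l_a$ with $\bar l_a$ given by \eqref{7-15-11}. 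First I would record the $\|\cdot\|_{\delta_a}$-analogue of Lemma~\ref{lem-A.1}: since the linearization of $-\Delta Q+Q=Q^{3}$ at $Q$ is non-degenerate, with kernel spanned by $\partial_{x_1}Q,\dots,\partial_{x_N}Q$, and since the peaks stay uniformly separated ($b_i\ne b_j$), the operator $L_a$ restricted to $\bigcap^k_{i=1}\tilde{E}_{a,x_{a,i}}$ satisfies $\|L_a u\|_{\delta_a}\ge\rho\|u\|_{\delta_a}$ for some $\rho>0$ independent of $a$. Combined with the superlinear estimate $\|N_a(\bar v_a)\|_{\delta_a}=o(1)\|\bar v_a\|_{\delta_a}$, which follows from $\|\bar v_a\|_{\delta_a}=o(\delta_a^{\frac{N}{2}})$ and the uniform smallness in $L^\infty$ of the $\delta_a$-rescaled $\bar v_a$ (just as for \eqref{aaab10-31-7}), applying $L_a^{-1}$ to the equation yields $\|\bar v_a\|_{\delta_a}\le C\|\bar l_a\|_{\delta_a}$. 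Thus the lemma reduces to showing $\|\bar l_a\|_{\delta_a}=O(\delta_a^{\frac{N}{2}+4})$.

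For that estimate I would split $\bar l_a$ as in \eqref{7-15-11} into the cross-cubic part $\big(\sum_i\tilde{Q}_{\delta_a,x_{a,i}}\big)^3-\sum_i\tilde{Q}^3_{\delta_a,x_{a,i}}$ and the potential part $\big(-\mu_a\delta_a^2-1+\delta_a^2V(x)\big)\sum_i\tilde{Q}_{\delta_a,x_{a,i}}$. The cross-cubic part is concentrated, up to exponentially small tails, in the overlap regions of the bumps; since $|x_{a,i}-x_{a,j}|\ge c>0$ for $i\ne j$, its $\|\cdot\|_{\delta_a}$-norm is $O(e^{-\gamma/\delta_a})$ and hence negligible. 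For the potential part I would use \eqref{abc8-29-1} to write $-\mu_a\delta_a^2-1=\gamma_1\delta_a^2+O(\delta_a^4)$, so on the support of $\tilde{Q}_{\delta_a,x_{a,i}}$ (effectively of size $O(\delta_a)$, with exponential decay) the coefficient equals $(\gamma_1+V_i)\delta_a^2+\delta_a^2\big(V(x)-V_i\big)+O(\delta_a^4)$. The constant piece $(\gamma_1+V_i)\delta_a^2$ is exactly what is killed by the limit equation built into $\tilde{Q}_{\delta_a,x_{a,i}}$, whose frozen mass $1+(\gamma_1+V_i)\delta_a^2$ agrees with $-\mu_a\delta_a^2+\delta_a^2V(x_{a,i})$ up to $O(\delta_a^4)$ because $V(x_{a,i})-V_i=O(\delta_a^4)$; what then remains is $\delta_a^2\big(V(x)-V_i\big)\tilde{Q}_{\delta_a,x_{a,i}}+O(\delta_a^4)\tilde{Q}_{\delta_a,x_{a,i}}$. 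Now \eqref{8-22-3} gives $|x_{a,i}-\bar x_{a,i}|=O(\delta_a^2)$ with $\bar x_{a,i}\in\Gamma_i$, and since $V\equiv V_i$ and $\partial V/\partial\nu_i=0$ along $\Gamma_i$ one has $\nabla V(\bar x_{a,i})=0$ and therefore $V(x_{a,i})-V_i=O(\delta_a^4)$; together with $\nabla V(x_{a,i})=O(\delta_a^2)$ from Lemma~\ref{lem-1-1-8} (recall $\varepsilon^2=\delta_a^2(1+O(\delta_a^2))$ by \eqref{abc8-29-1}), a Taylor expansion about $x_{a,i}$ gives $V(x)-V_i=O(\delta_a^2)\,\langle\nu_{a,i},x-x_{a,i}\rangle+O(|x-x_{a,i}|^2)+O(\delta_a^4)$ there. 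Rescaling $x=x_{a,i}+\delta_a y$, under which $\|f\|_{\delta_a}^2=\delta_a^{N}\int_{\R^N}\big(|\nabla_y f|^2+f^2\big)$ and $|x-x_{a,i}|\sim\delta_a|y|$, every surviving piece of $\delta_a^2(V(x)-V_i)\tilde{Q}_{\delta_a,x_{a,i}}+O(\delta_a^4)\tilde{Q}_{\delta_a,x_{a,i}}$ contributes at most $O(\delta_a^{\frac{N}{2}+4})$; the two dominant ones are the $O(|x-x_{a,i}|^2)$ term and the $O(\delta_a^4)$ term, the rest being of order $\delta_a^{\frac{N}{2}+5}$ or smaller. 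This gives $\|\bar l_a\|_{\delta_a}=O(\delta_a^{\frac{N}{2}+4})$, and hence \eqref{8-27-26}.

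I expect the main obstacle to be precisely this $\bar l_a$ estimate, and inside it the bookkeeping of the cancellation: one has to verify that, after subtracting the constant-mass part, the residual coefficient of $\tilde{Q}_{\delta_a,x_{a,i}}$ is genuinely $O(|x-x_{a,i}|^2)+O(\delta_a^4)$ rather than merely $O(\delta_a^2)$. This rests on using \emph{consistently} the exact value of $\gamma_1$ from \eqref{abc8-29-1}, the location expansions \eqref{8-27-38} and \eqref{8-22-3}, and the bounds $D_{\nu_{a,i}}V(x_{a,i})=O(\delta_a^2)$ and $(D_{\tau_{a,i}}\Delta V)(x_{a,i})=O(\delta_a^2)$ from Lemma~\ref{lem-1-1-8} and \eqref{06-09-10}; in particular, the geometric fact that $x_{a,i}$ sits at normal distance $O(\delta_a^2)$ from $\Gamma_i$ is what upgrades $V(x_{a,i})-V_i$ from $O(\delta_a^2)$ to $O(\delta_a^4)$ and is indispensable. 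A secondary check is that the mismatch between the actual mass $-\mu_a\delta_a^2+\delta_a^2V(x)$ in \eqref{1-12-11} and the frozen mass built into $\tilde{Q}_{\delta_a,x_{a,i}}$, as well as the difference between the $L_a$ used here and the one in \eqref{06-07-2}, only introduce $O(\delta_a^2)$ relative perturbations that are absorbed into the estimates above and do not spoil the uniform lower bound $\rho$.
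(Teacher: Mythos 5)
Your proposal is correct and takes essentially the same route as the paper: the paper's proof simply combines the coercivity bound \eqref{B.2} with the refined residual estimate $\|\bar l_a\|_{\delta_a}=O\big(\delta_a^{\frac N2+4}\big)$, obtained by inserting $|V(x_{a,i})-V_i|=O(\delta_a^4)$ (from \eqref{8-22-3}) and $|\nabla V(x_{a,i})|=O(\delta_a^2)$ (from Lemma~\ref{lem-1-1-8}) into the generic bound of the form \eqref{10-31-7}. Your extra bookkeeping of the $\gamma_1$/frozen-mass cancellation is precisely what the paper leaves implicit.
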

\begin{proof} The proofs are similar to that of Lemma \ref{lem-5-05-1},
the difference is
 \begin{equation}\label{06-07-4}
   \|\bar l_{a}\|_{\delta_a}=O\Bigl( \big(\sum^k_{i=1}\big| V(x_{a,i})-V_i\big|\big)\delta_a^{\frac{N}{2}+2}+\sum^k_{i=1}\big|\nabla V(x_{a,i})\big|\delta_a^{\frac{N}{2}+3}+ \delta_a^{\frac{N}{2}+4}\Bigr)
   =O\Bigl(\delta_a^{\frac{N}{2}+4}\Bigr).
 \end{equation}
Here the definition of $\bar l_{a}$ lies in \eqref{7-15-11}. Finally,  \eqref{06-07-4} and  \eqref{B.2} imply \eqref{8-27-26}.
\end{proof}

Let $u_a^{(1)}$ and $u_a^{(2)}$ be two $k$-peak solutions of \eqref{1-12-11}--\eqref{a1-12-11} concentrating  at $k$ points $b_1,\cdots,b_k$, which can be written as
\begin{equation}\label{8-20-1}
u_a^{(l)}=
\sum^k_{i=1}\tilde Q_{\delta_a,x_{a,i}^{(l)}}+v^{(l)}_{a}(x),
~\mbox{for}~l=1,2,~\mbox{and}~v^{(l)}_{a}\in    \bigcap^k_{i=1} \tilde E_{a,{x}^{(l)}_{a,i}}.
\end{equation}
Now we set $
\xi_{a}(x)=\frac{u_{a}^{(1)}(x)-u_{a}^{(2)}(x)}
{\|u_{a}^{(1)}-u_{a}^{(2)}\|_{L^{\infty}(\R^N)}}$.
Then $\xi_{a}(x)$ satisfies $\|\xi_{a}\|_{L^{\infty}(\R^N)}=1$. And from \eqref{1-12-11}, we find that
$\xi_a$ satisfies
\begin{equation*}
-\delta_a^2 \Delta \xi_{a}(x)+ C_{a}(x)\xi_{a}=g_a(x),
\end{equation*}
where
\begin{equation*}
C_{a}(x)=\delta_a^2 V(x)-\delta_a^2 \mu_a^{(1)}-  \big(\sum^2_{l=1}(u_a^{(l)})^2 +u_a^{(1)}u_a^{(2)}\big),~~
g_{a}(x)=\frac{\delta_a^2 (\mu_{a}^{(1)}-\mu_{a}^{(2)})}
{\|u_{a}^{(1)}-u_{a}^{(2)}\|_{L^{\infty}(\R^N)}}u_a^{(2)}.
\end{equation*}
Also, similar to \eqref{2--5}, for any fixed $R\gg 1$, there exist some $\theta>0$ and $C>0$, such that
\begin{equation}\label{a2---5}
|\xi_a(x)|+|\nabla \xi_a(x)|\leq C\sum^k_{i=1}e^{-\theta |x-x_{a,i}|/\delta_a},~\mbox{for}~
x\in \R^N\backslash \bigcup^k_{i=1}B_{R \delta_a}(x_{a,i}).
\end{equation}
Now let $\bar \xi_{a,i}(x)=\xi_{a}\big(\frac{\delta_a x+x^{(1)}_{a,i}}{\sqrt{1+(\gamma_1+V_i)\delta_a^2}}\big)$,
for $i=1,\cdots,k$,  we have
\begin{equation}\label{8-28-11}
-\Delta \bar \xi_{a,i}(x)+ \frac{C_{a}(\delta_a x+x^{(1)}_{a,i})}{1+(\gamma_1+V_i)\delta_a^2}\bar\xi_{a,i}=
\frac{g_a(\delta_a x+x^{(1)}_{a,i})}{1+(\gamma_1+V_i)\delta_a^2}.
\end{equation}
\begin{Lem}\label{lem---1}
 For $x\in B_{d\delta_a^{-1}}(0)$, it holds
\begin{equation}\label{8-28-12}
\frac{C_{a}(\delta_a x+x^{(1)}_{a,i})}{1+(\gamma_1+V_i)\delta_a^2}=
1 -3Q^2(x)+O\Big(\delta_a^4+\sum^2_{l=1}
 v_a^{(l)}(\delta_a x+x^{(1)}_{a,i})
 \Big),
\end{equation}
and
\begin{equation}\label{8-28-13}
\begin{split}
\frac{g_a(\delta_a x+x^{(1)}_{a,i})}{1+(\gamma_1+V_i)\delta_a^2}=-\frac{2}{ka_*}Q(x)\sum^k_{l=1}\int_{\R^N}Q^3(x)\bar \xi_{a,l}(x)+O\Big(\delta_a^4+\sum^2_{l=1}
 v_a^{(l)}(\delta_a x+x^{(1)}_{a,i})
 \Big).
\end{split}\end{equation}
\end{Lem}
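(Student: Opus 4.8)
The plan is to plug the decompositions and asymptotics already obtained into the explicit formulas
\[
C_a(x)=\delta_a^2V(x)-\delta_a^2\mu_a^{(1)}-\bigl[(u_a^{(1)})^2+(u_a^{(2)})^2+u_a^{(1)}u_a^{(2)}\bigr](x),\qquad g_a=\Lambda_a\,u_a^{(2)},
\]
where $\Lambda_a:=\delta_a^2(\mu_a^{(1)}-\mu_a^{(2)})/\|u_a^{(1)}-u_a^{(2)}\|_{L^\infty(\R^N)}$, and then to rescale around the $i$-th peak $x_{a,i}^{(1)}$ at scale $\delta_a$ as in \eqref{8-28-11} and collect terms. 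Throughout I would use: the chemical-potential expansion $-\delta_a^2\mu_a^{(l)}=1+\gamma_1\delta_a^2+O(\delta_a^4)$ from \eqref{abc8-29-1}; the peak-location formula \eqref{8-27-38}, giving $x_{a,i}^{(l)}-b_i=\bar L_i\delta_a^2+O(\delta_a^4)$ and hence $|x_{a,i}^{(1)}-x_{a,i}^{(2)}|=O(\delta_a^4)$; the smallness $\|v_a^{(l)}\|_{\delta_a}=O(\delta_a^{\frac N2+4})$ from \eqref{8-27-26} (applied to each $u_a^{(l)}$), which after the dilation $x\mapsto\delta_a x+x_{a,i}^{(1)}$ and a Moser iteration becomes a pointwise bound $v_a^{(l)}(\delta_a\,\cdot+x_{a,i}^{(1)})=O(\delta_a^4)$ in $L^\infty$; the exponential decay \eqref{2--5}, which makes the bumps $j\ne i$ contribute only $O(e^{-c/\delta_a})$ on $B_d(x_{a,i}^{(1)})$; and the rescaled decay \eqref{a2---5}, giving $|\bar\xi_{a,i}(x)|\le Ce^{-\theta|x|}$ for $|x|\ge R$, together with the locally uniform $C^{1,\alpha}$-bounds for $\bar\xi_{a,i}$ furnished by elliptic theory applied to \eqref{8-28-11}.

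For \eqref{8-28-12} I would treat the two brackets of $C_a$ separately at the rescaled point. In the potential bracket, insert \eqref{abc8-29-1} and Taylor-expand $V$ about $x_{a,i}^{(1)}$, using $\nabla V(x_{a,i}^{(1)})=O(\delta_a^2)$ from \eqref{1-1-8} and $V(x_{a,i}^{(1)})-V_i=O(\delta_a^4)$ (since $x_{a,i}^{(1)}-\bar x_{a,i}^{(1)}=O(\delta_a^2)$ by \eqref{8-22-3}, $\bar x_{a,i}^{(1)}\in\Gamma_i$, and $V\equiv V_i$, $\partial_{\nu_i}V\equiv0$ on $\Gamma_i$); this gives the bracket $=1+(\gamma_1+V_i)\delta_a^2+O(\delta_a^4)$ locally uniformly, so dividing by $1+(\gamma_1+V_i)\delta_a^2$ turns it into $1+O(\delta_a^4)$. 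In the cubic bracket, \eqref{8-20-1} and the displacement estimate above give $u_a^{(l)}(\delta_a x+x_{a,i}^{(1)})=Q(x)+v_a^{(l)}(\delta_a x+x_{a,i}^{(1)})+O(\delta_a^4+e^{-c/\delta_a})$; squaring and cross-multiplying, and absorbing the quadratic-in-$v$ terms via the $L^\infty$-smallness of the $v_a^{(l)}$, yields $3Q^2(x)+O\bigl(\delta_a^4+\sum_lv_a^{(l)}(\delta_a x+x_{a,i}^{(1)})\bigr)$. Subtracting the two brackets gives \eqref{8-28-12}.

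For \eqref{8-28-13} the new ingredient is the identification of $\Lambda_a$. Testing \eqref{1-12-11} for $u_a^{(1)}$ against $u_a^{(2)}$ and \eqref{1-12-11} for $u_a^{(2)}$ against $u_a^{(1)}$ and subtracting, the Dirichlet and potential terms cancel, leaving
\[
\delta_a^2(\mu_a^{(2)}-\mu_a^{(1)})\int_{\R^N}u_a^{(1)}u_a^{(2)}=\|u_a^{(1)}-u_a^{(2)}\|_{L^\infty(\R^N)}\int_{\R^N}u_a^{(1)}u_a^{(2)}(u_a^{(1)}+u_a^{(2)})\,\xi_a,
\]
hence $\Lambda_a=-\bigl(\int_{\R^N}u_a^{(1)}u_a^{(2)}(u_a^{(1)}+u_a^{(2)})\xi_a\bigr)\big/\bigl(\int_{\R^N}u_a^{(1)}u_a^{(2)}\bigr)$, which is $O(1)$ because $\|\xi_a\|_{L^\infty}=1$ and both integrals are of size $\delta_a^N$. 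Localizing both integrals at the $k$ peaks and rescaling as above gives $\int_{\R^N}u_a^{(1)}u_a^{(2)}=ka_*\delta_a^N(1+o(1))$ and $\int_{\R^N}u_a^{(1)}u_a^{(2)}(u_a^{(1)}+u_a^{(2)})\xi_a=2\delta_a^N\sum_{l=1}^k\int_{\R^N}Q^3\bar\xi_{a,l}+o(\delta_a^N)$, so $\Lambda_a=-\frac2{ka_*}\sum_{l=1}^k\int_{\R^N}Q^3\bar\xi_{a,l}+o(1)$. Combining this with $g_a=\Lambda_a u_a^{(2)}$ and the expansion of $u_a^{(2)}$ used above, and dividing by $1+(\gamma_1+V_i)\delta_a^2$, yields \eqref{8-28-13}. (The companion orthogonality $\int_{\R^N}(u_a^{(1)}+u_a^{(2)})\xi_a=0$, coming from the common constraint \eqref{a1-12-11}, is not needed here but will be used afterwards to pin down the limit of $\bar\xi_{a,i}$.)

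The routine part is the Taylor and rescaling bookkeeping; the genuinely delicate points are, first, deriving the identity for $\Lambda_a$ and extracting its leading order — this is what forces the $Q^3$-weighted integrals of the $\bar\xi_{a,l}$, i.e. the nonlocal coupling between the $k$ peaks, into the limiting equation \eqref{8-28-11} — and, second, keeping every remainder uniformly below the stated order: in particular controlling the Taylor tail of $V$ and the mismatch between the dilation factor $\sqrt{1+(\gamma_1+V_i)\delta_a^2}$ and $1$, which one handles by exploiting the rescaled exponential decay \eqref{a2---5} of $\bar\xi_{a,i}$ and the locally uniform bounds on it. I expect this uniform error control, intertwined with the computation of $\Lambda_a$, to be the main obstacle.
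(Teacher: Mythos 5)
Your proposal is correct and follows essentially the same route as the paper: \eqref{8-28-12} by direct substitution of \eqref{abc8-29-1} and \eqref{8-27-38}, and \eqref{8-28-13} by expressing $\delta_a^2(\mu_a^{(1)}-\mu_a^{(2)})/\|u_a^{(1)}-u_a^{(2)}\|_{L^\infty}$ through integral identities obtained from the two equations and then localizing and rescaling at the peaks to produce the nonlocal term $-\frac{2}{ka_*}Q\sum_l\int Q^3\bar\xi_{a,l}$. The only cosmetic difference is that you obtain the identity for $\Lambda_a$ by cross-testing the two equations directly, whereas the paper combines the energy identity coming from the constraint \eqref{a1-12-11} with the orthogonality $\int(u_a^{(1)}+u_a^{(2)})\xi_a=0$; both yield the same expression \eqref{06-10-1}.
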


\begin{proof}
First, \eqref{8-28-12}  can be deduced by \eqref{abc8-29-1} and \eqref{8-27-38} directly.
 Now we prove \eqref{8-28-13}.

From \eqref{1-12-11} and \eqref{a1-12-11}, for $l=1,2$, we find
\begin{equation*}
\begin{split}
 a\mu^{(l)}_a \delta_a^4= \delta_a^2\int_{\R^N} \big( |\nabla u_a^{(l)}|^2+ V(x)(u_a^{(l)})^2\big)- \int_{\R^N}(u_a^{(l)})^4,
\end{split}\end{equation*}
which gives
\begin{equation}\label{06-10-2}
\begin{split}
&\frac{a\delta_a^4(\mu_{a}^{(1)}-\mu_{a}^{(2)})}
{\|u_{a}^{(1)}-u_{a}^{(2)}\|_{L^{\infty}(\R^N)}} \\=&
 \mu_a^{(2)}\delta_a^{2}\int_{\R^N}
(u_a^{(1)}+u_a^{(2)}) \xi_a+\delta_a^2\int_{\R^N}  \big(  \nabla (u_a^{(1)}+u_a^{(2)})\cdot
 \nabla \xi_a + V(x)(u_a^{(1)}+u_a^{(2)}) \xi_a\big)\\&
-\int_{\R^N}
(u_a^{(1)}+u_a^{(2)}) \big((u_a^{(1)})^2+(u_a^{(2)})^2\big) \xi_a\\=&
\bigl( \mu^{(2)}_a -\mu_a^{(1)}\bigr)\delta_a^2\int_{\R^N}  u_a^{(1)}\xi_a
- \int_{\R^N}
(u_a^{(1)}+u_a^{(2)})  u_a^{(1)}u_a^{(2)}\xi_a,
\end{split}\end{equation}
here we use the following identity:
\[
\int_{\R^N}
 \bigl( u_a^{(1)}+ u_a^{(2)}\bigr)\xi_a= \frac{1}
{\|u_{a}^{(1)}-u_{a}^{(2)}\|_{L^{\infty}(\R^N)}}\Big(\int_{\R^N}
 ( u_a^{(1)} )^2 -\int_{\R^N}
 ( u_a^{(2)} )^2\Big) =0.
\]
Then from \eqref{abc8-29-1}, \eqref{8-27-38}, \eqref{8-27-26} and \eqref{06-10-2}, we know
\begin{equation}\label{06-10-1}
\begin{split}
&\frac{\delta_a^2 (\mu_{a}^{(1)}-\mu_{a}^{(2)})}
{\|u_{a}^{(1)}-u_{a}^{(2)}\|_{L^{\infty}(\R^N)}}=-
\frac{1}{ka_*\delta_a^N} \int_{\R^N}
(u_a^{(1)}+u_a^{(2)})  u_a^{(1)}u_a^{(2)}\xi_a+O \Big(\delta_a^4\Big).
\end{split}\end{equation}
So we can find \eqref{8-28-13}  by
\eqref{06-10-1}.
\end{proof}

Then from Lemma \ref{lem---1}, we have the following result:
\begin{Lem}
  From $|\bar \xi_{a,i}|\le 1$, we suppose that  $\bar \xi_{a,i}(x)\rightarrow \xi_i(x)$ in $C^1_{loc}(\R^N)$. Then  $\xi_i(x)$ satisfies following system:
  \begin{equation*}
-\Delta \xi_{i}(x)+\big(1-3Q^2(x)\big)\xi_{i}(x)=-\frac{2}{ka_*}Q(x)
\Big(\sum^k_{l=1}\int_{\R^N}Q^3(x)\xi_l(x)\Big),~\mbox{for}~i=1,\cdots,k.
\end{equation*}

\end{Lem}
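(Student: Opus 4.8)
The plan is to pass to the limit in the rescaled equation \eqref{8-28-11} and identify the limit system stated in the lemma. First I would fix $i\in\{1,\dots,k\}$ and recall that, by construction, $\bar\xi_{a,i}(x)=\xi_{a}\big(\frac{\delta_a x+x^{(1)}_{a,i}}{\sqrt{1+(\gamma_1+V_i)\delta_a^2}}\big)$ satisfies \eqref{8-28-11}, and that $\|\xi_a\|_{L^\infty(\R^N)}=1$ forces $|\bar\xi_{a,i}|\le 1$ uniformly. The elliptic estimates applied to \eqref{8-28-11}, together with the bound on $\xi_a$ and the decay estimate \eqref{a2---5}, give uniform $C^{1,\alpha}_{loc}$ bounds on $\bar\xi_{a,i}$, so that up to a subsequence $\bar\xi_{a,i}\to\xi_i$ in $C^1_{loc}(\R^N)$; this convergence is exactly the hypothesis of the lemma, so it suffices to check that the limit equation is the claimed one.

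The heart of the matter is to pass to the limit in the two coefficient expansions of Lemma~\ref{lem---1}. For the potential term, \eqref{8-28-12} says
\[
\frac{C_{a}(\delta_a x+x^{(1)}_{a,i})}{1+(\gamma_1+V_i)\delta_a^2}=
1 -3Q^2(x)+O\Big(\delta_a^4+\sum^2_{l=1}
 v_a^{(l)}(\delta_a x+x^{(1)}_{a,i})\Big),
\]
and since $\|v_a^{(l)}\|_{\delta_a}=O(\delta_a^{\frac N2+4})$ by \eqref{8-27-26}, a Moser iteration (as already used before Proposition~\ref{prop1-1} to get $L^\infty$ control of the rescaled error) gives $\sup_{x}\big|v_a^{(l)}(\delta_a x+x^{(1)}_{a,i})\big|=o(1)$, hence the coefficient converges locally uniformly to $1-3Q^2(x)$. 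For the right-hand side, \eqref{8-28-13} says
\[
\frac{g_a(\delta_a x+x^{(1)}_{a,i})}{1+(\gamma_1+V_i)\delta_a^2}=-\frac{2}{ka_*}Q(x)\sum^k_{l=1}\int_{\R^N}Q^3(x)\bar \xi_{a,l}(x)+O\Big(\delta_a^4+\sum^2_{l=1}
 v_a^{(l)}(\delta_a x+x^{(1)}_{a,i})\Big),
\]
and again the error tends to $0$ locally uniformly. For the main term I would use the $C^1_{loc}$ convergence $\bar\xi_{a,l}\to\xi_l$ together with the exponential decay \eqref{a2---5} (which, after the change of variables, becomes $|\bar\xi_{a,l}(x)|\le C\sum_j e^{-\theta|x-y_{a,l,j}|}$ with the relevant centers going to $0$ or to $\infty$); since $Q^3$ also decays exponentially, dominated convergence lets me pass $\int_{\R^N}Q^3\bar\xi_{a,l}\to\int_{\R^N}Q^3\xi_l$. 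Therefore the full right-hand side converges locally uniformly to $-\frac{2}{ka_*}Q(x)\sum_{l=1}^k\int_{\R^N}Q^3\xi_l$.

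With both coefficients converging locally uniformly and $\bar\xi_{a,i}\to\xi_i$ in $C^1_{loc}$, I can pass to the limit in \eqref{8-28-11} in the distributional sense (equivalently, test against $C_c^\infty$ functions and use the convergences), and by elliptic regularity $\xi_i$ is a classical solution of
\[
-\Delta \xi_{i}(x)+\big(1-3Q^2(x)\big)\xi_{i}(x)=-\frac{2}{ka_*}Q(x)
\Big(\sum^k_{l=1}\int_{\R^N}Q^3(x)\xi_l(x)\Big),\quad i=1,\dots,k,
\]
which is the asserted system. The main obstacle is the rigorous justification of the local convergence $\int Q^3\bar\xi_{a,l}\to\int Q^3\xi_l$, i.e.\ converting the pointwise/$C^1_{loc}$ convergence plus the uniform exponential-decay tail estimate \eqref{a2---5} into an honest dominated-convergence argument over all of $\R^N$; everything else is a routine passage to the limit once the expansions of Lemma~\ref{lem---1} are in hand.
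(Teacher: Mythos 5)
Your proposal is correct and is exactly the argument the paper intends: the paper states this lemma with no proof beyond the phrase ``from Lemma~\ref{lem---1}, we have the following result,'' and your passage to the limit in \eqref{8-28-11} using the expansions \eqref{8-28-12}--\eqref{8-28-13}, the $L^\infty$ smallness of the rescaled $v_a^{(l)}$, and dominated convergence for $\int_{\R^N}Q^3\bar\xi_{a,l}$ is the standard way to fill in that step. (The last point is even easier than you suggest: since $|\bar\xi_{a,l}|\le 1$ and $Q^3\in L^1(\R^N)$, $Q^3$ itself is the dominating function and \eqref{a2---5} is not needed there.)
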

To prove $\xi_i=0$,
 we write
\begin{equation}\label{aaaaaluo--1}
\bar \xi _{a,i}(x)= \sum^N_{j=0}\gamma_{a,i,j}\psi_j+\tilde{\xi}_{a,i}(x),~\mbox{in} ~H^1(\R^N),
\end{equation}
where $\psi_j (j=0,1,\cdots,N)$ are the functions in \eqref{aaaaa} and $\tilde{\xi}_{a,i}(x)\in \tilde{E}$ with
\begin{equation*}
\tilde{E}=\{u\in H^1(\R^N), \langle u,\psi_j\rangle=0, ~\mbox{for}~j=0,1,\cdots,N\}.
\end{equation*}
It is standard to prove the following result:

\begin{Lem}\label{lem8-27-4}
For any $u\in \tilde{E}$,  there exists $\bar\gamma>0$ such that
\begin{equation*}
\|\tilde  L(u)\|\geq \bar\gamma \|u\|,
\end{equation*}
where $\tilde  L$ is defined by
\begin{equation*}
\tilde  L(u):=-\Delta u(x)+ u(x)-3Q^2(x)u(x)+\frac{2}{a_*}Q(x)\int_{\R^N}Q^3(x) u(x).
\end{equation*}
\end{Lem}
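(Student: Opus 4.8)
The plan is to argue by contradiction, after first pinning down the kernel of $\tilde L$. Regard $\tilde L$ as a bounded operator on $H^1(\mathbb R^N)$ defined through Riesz representation of its associated bilinear form. Then $\tilde L = L_0 + K$, where $L_0 u = -\Delta u + u - 3Q^2 u$ is the usual linearization at $Q$ and $K u = \frac{2}{a_*}Q\int_{\mathbb R^N}Q^3 u$ is a rank-one, hence compact, perturbation; since multiplication by the exponentially decaying function $3Q^2$ followed by $(-\Delta+1)^{-1}$ is compact by Rellich's theorem, we may write $\tilde L = \mathrm{Id} - \mathcal K$ with $\mathcal K$ compact on $H^1(\mathbb R^N)$. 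In particular $\tilde L$ is Fredholm of index zero.

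Next I would identify $\ker\tilde L$. By the nondegeneracy of $Q$ one has $\ker L_0 = \operatorname{span}\{\partial_{x_1}Q,\dots,\partial_{x_N}Q\}$. Differentiating the family $Q_\lambda(x) = \sqrt\lambda\,Q(\sqrt\lambda x)$, which solves $-\Delta Q_\lambda + \lambda Q_\lambda = Q_\lambda^3$, at $\lambda=1$ gives $L_0\psi_0 = -Q$ with $\psi_0 = \tfrac12 Q + \tfrac12 x\cdot\nabla Q$; this $\psi_0$ (up to normalization) is the extra generator among the $\psi_j$ of \eqref{aaaaa}. Using $\int_{\mathbb R^N}Q^3\,x\cdot\nabla Q = -\tfrac N4\int_{\mathbb R^N}Q^4$ together with the Pohozaev identity \eqref{06-13-1}, i.e. $(4-N)\int_{\mathbb R^N}Q^4 = 4\int_{\mathbb R^N}Q^2 = 4a_*$, one gets $\int_{\mathbb R^N}Q^3\psi_0 = \tfrac{4-N}{8}\int_{\mathbb R^N}Q^4 = \tfrac{a_*}{2}$, whence $\tilde L\psi_0 = -Q + \tfrac{2}{a_*}Q\cdot\tfrac{a_*}{2} = 0$; also $\tilde L\,\partial_{x_j}Q = 0$ since $\int_{\mathbb R^N}Q^3\,\partial_{x_j}Q = 0$. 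Thus $\operatorname{span}\{\psi_0,\dots,\psi_N\}\subseteq\ker\tilde L$. Conversely, if $\tilde L u = 0$ and $c = \int_{\mathbb R^N}Q^3 u$, then $L_0 u = -\tfrac{2c}{a_*}Q = \tfrac{2c}{a_*}L_0\psi_0$, so $u - \tfrac{2c}{a_*}\psi_0\in\ker L_0$ and hence $u\in\operatorname{span}\{\psi_0,\dots,\psi_N\}$. Therefore $\ker\tilde L = \operatorname{span}\{\psi_0,\dots,\psi_N\}$, which is precisely the orthogonal complement of $\tilde E$; in particular $\ker\tilde L\cap\tilde E = \{0\}$.

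Finally, the estimate itself. Suppose it fails: pick $u_n\in\tilde E$ with $\|u_n\| = 1$ and $\|\tilde L u_n\|\to 0$. From $u_n = \tilde L u_n + \mathcal K u_n$ and the compactness of $\mathcal K$, a subsequence of $\mathcal K u_n$ converges in $H^1(\mathbb R^N)$, so $u_n\to u$ strongly with $\|u\| = 1$; since $\tilde E$ is closed, $u\in\tilde E$, and passing to the limit $\tilde L u = 0$, hence $u\in\ker\tilde L\cap\tilde E = \{0\}$, contradicting $\|u\| = 1$.

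The one genuinely nontrivial point — and the main obstacle — is verifying that the rank-one term does \emph{not} enlarge the kernel, i.e. the computation $\int_{\mathbb R^N}Q^3\psi_0 = a_*/2$ that forces $\tilde L\psi_0 = 0$; this is exactly where the nondegeneracy of $Q$ and the Pohozaev identity \eqref{06-13-1} come in. Everything else is the standard Fredholm alternative for a compact perturbation of the identity.
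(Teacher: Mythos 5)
Your proof is correct and complete: the decomposition $\tilde L=\mathrm{Id}-\mathcal K$ with $\mathcal K$ compact, the identification of $\ker\tilde L=\operatorname{span}\{\psi_0,\dots,\psi_N\}$ via the nondegeneracy of $Q$, the scaling family $Q_\lambda$, and the Pohozaev identity \eqref{06-13-1} (the computation $\int_{\R^N}Q^3\psi_0=a_*/2$ checks out), and the concluding compactness/contradiction argument are all sound. The paper offers no proof, dismissing the lemma as ``standard''; what you have written is exactly the standard Fredholm-alternative argument being alluded to, with the one genuinely substantive step --- that the rank-one term kills the $-Q$ produced by $L_0\psi_0$ rather than enlarging or shrinking the kernel --- carried out explicitly.
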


\begin{Prop}\label{p-1-12-11}
Let $\tilde{\xi}_{a,i}(x)$ be as in \eqref{aaaaaluo--1}, then
\begin{equation}\label{8-27-2}
 \|\tilde{\xi}_{a,i}\| =O(\delta_a^4), ~\mbox{for}~i=1,\cdots,k.
\end{equation}
\end{Prop}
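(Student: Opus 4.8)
The plan is to establish the estimate \eqref{8-27-2} by plugging the decomposition \eqref{aaaaaluo--1} into the equation \eqref{8-28-11} satisfied by $\bar\xi_{a,i}$ and then applying the invertibility estimate of Lemma~\ref{lem8-27-4}. More precisely, I would first substitute \eqref{8-28-12} and \eqref{8-28-13} into \eqref{8-28-11}, rewriting the equation for $\bar\xi_{a,i}$ in the form $\tilde L(\bar\xi_{a,i}) = \text{(error terms)}$, where $\tilde L$ is the limiting operator from Lemma~\ref{lem8-27-4}. Here one has to be a little careful: the coupling term in \eqref{8-28-13} involves $\sum_l \int Q^3 \bar\xi_{a,l}$ whereas $\tilde L$ uses the single-index term $\int Q^3 u$; so I would first pass to the limit in the coupling coefficients, or rather keep them as slowly-varying quantities $c_{a,i} := \frac{2}{ka_*}\sum_l \int Q^3 \bar\xi_{a,l}$ and note that $\tilde L(\bar\xi_{a,i})$ differs from the exact equation by terms controlled by $\delta_a^4$ plus the contributions of $\sum_l v_a^{(l)}(\delta_a x + x^{(1)}_{a,i})$ and the difference between the true coupling and the $\tilde L$-coupling.

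**Next** I would handle the error terms. The $O(\delta_a^4)$ term in $H^1$-norm is immediate (after accounting for the change of variables, which introduces only bounded Jacobian factors). The term involving $v_a^{(l)}(\delta_a x + x^{(1)}_{a,i})$ must be estimated using \eqref{06-09-4}/\eqref{8-27-26}: rescaling, $\|v_a^{(l)}(\delta_a \cdot + x_{a,i}^{(1)})\|_{H^1}$ is controlled by $\delta_a^{-N/2}\|v_a^{(l)}\|_{\delta_a} = O(\delta_a^4)$. The projection of the equation onto $\tilde E$ removes the $\psi_j$ components, so one gets $\tilde L(\tilde\xi_{a,i}) = O(\delta_a^4)$ in the appropriate norm, modulo the coupling term. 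For the coupling term, observe that $\psi_0 = $ (up to normalization) the zero mode coming from scaling, and $Q$ itself is in the span tested by $\int Q^3(\cdot)$; since the right-hand side of the limiting system is a multiple of $Q(x)$, and the projection onto $\tilde E$ must be taken, I would show the coupling contribution also projects to something of size $O(\delta_a^4)$ — crucially because the inhomogeneity is proportional to $Q$, which interacts cleanly with the kernel structure $\psi_0,\ldots,\psi_N$ of the linearized operator $-\Delta + 1 - 3Q^2$. Then Lemma~\ref{lem8-27-4} gives $\|\tilde\xi_{a,i}\| = O(\delta_a^4)$.

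**The main obstacle** I expect is bookkeeping the coupling between the $k$ components $\bar\xi_{a,i}$, $i=1,\ldots,k$, and confirming that the common coupling coefficient $\sum_l \int Q^3 \bar\xi_{a,l}$ does not blow up. A priori $|\bar\xi_{a,i}| \le 1$ only gives an $L^\infty$ bound, not an $H^1$ bound, so one needs to first upgrade to a uniform $H^1$ bound on $\bar\xi_{a,i}$ (using the equation \eqref{8-28-11}, elliptic estimates, and the exponential decay \eqref{a2---5} outside $B_{R\delta_a}(x_{a,i})$), which then makes $\int Q^3 \bar\xi_{a,i}$ uniformly bounded. Once that is in place, the coupling term is a bounded multiple of $Q$ and everything fits the framework of Lemma~\ref{lem8-27-4}. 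A secondary technical point is tracking how the affine change of variable $x \mapsto \frac{\delta_a x + x_{a,i}^{(1)}}{\sqrt{1+(\gamma_1+V_i)\delta_a^2}}$ interacts with the test functions $\psi_j$ and with the decomposition \eqref{aaaaaluo--1}, but this only contributes $O(\delta_a^2)$-type corrections to the coefficients, which are absorbed. The whole argument is a standard Lyapunov–Schmidt-type a priori estimate, the one genuinely delicate input being the sharp size $O(\delta_a^4)$ of $\bar l_a$ recorded in \eqref{06-07-4} and \eqref{8-27-26}, which is what ultimately feeds the $\delta_a^4$ on the right of \eqref{8-27-2}.
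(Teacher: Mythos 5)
Your proposal is correct and follows essentially the same route as the paper: substitute the expansions \eqref{8-28-12}--\eqref{8-28-13} into \eqref{8-28-11}, project onto $\tilde{E}$ to get $\tilde L(\tilde{\xi}_{a,i})=O(\delta_a^4)+O\big(\sum_l v_a^{(l)}(\delta_a\cdot+x^{(1)}_{a,i})\big)$, control the latter term by \eqref{8-27-26}, and conclude via the coercivity of $\tilde L$ from Lemma~\ref{lem8-27-4}. Your extra remarks on the coupling term and on upgrading the $L^\infty$ bound to an $H^1$ bound are sensible refinements of the same argument rather than a different approach.
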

\begin{proof}
First, Lemma \ref{lem8-27-4} gives
\begin{equation}\label{8-28-21}
\|\tilde{\xi}_{a,i}\|\le C \|\tilde  L(\tilde{\xi}_{a,i})\|.
\end{equation}
On the other hand,  from \eqref{8-28-11}--\eqref{aaaaaluo--1}, we can prove

\begin{equation}\label{8-28-26}
\begin{split}
 \tilde  L(\tilde{\xi}_{a,i})& = O\big(\delta_a^4\big)
 +O\Big(\sum^2_{l=1}Q(x)
 v_a^{(l)}(\delta_a x+x^{(1)}_{a,i})
 \Big).
\end{split}\end{equation}
So from \eqref{8-27-26}, \eqref{8-28-21} and \eqref{8-28-26}, we prove \eqref{8-27-2}.

\end{proof}

\begin{Lem}
For $N=2,3$, we have the following  estimate on $\xi_a$:
\begin{equation}\label{8-19-10}
\begin{split}
  \delta_a^2\sum^k_{i=1}&  \int_{B_d(x^{(1)}_{a,i})}  \big(2V(x)+\langle \nabla V(x), x-x^{(1)}_{a,i}\rangle \big) (u_a^{(1)}+u_a^{(2)})\xi_a
\\=&
2\bigl( \mu^{(2)}_a -\mu_a^{(1)}\bigr)\delta_a^2\int_{\R^N}  u_a^{(1)}\xi_a
 + \int_{\R^N} (u_a^{(1)}+u_a^{(2)})\big(u_a^{(1)}-u_a^{(2)}\big)^2\xi_a 
\\&+
 \big(1-\frac{N}{2}\big)\sum^k_{i=1}\int_{B_d(x^{(1)}_{a,i})} (u_a^{(1)}+u_a^{(2)})\big((u_a^{(1)})^2
+(u_a^{(2)})^2\big)\xi_a+
O\Big(e^{-\frac{\gamma}{\delta_a}}\Big).
\end{split}
\end{equation}
\end{Lem}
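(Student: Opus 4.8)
The statement is a localized Pohozaev identity for the difference of the two solutions, and the plan is to obtain it by writing such identities for $u^{(1)}_a$ and $u^{(2)}_a$ separately on each ball $B_d(x^{(1)}_{a,i})$ and then subtracting. For $l=1,2$ the function $u^{(l)}_a$ solves \eqref{1-12-11} with its own Lagrange parameter $\mu^{(l)}_a$; testing that equation against $\langle x-x^{(1)}_{a,i},\nabla u^{(l)}_a\rangle$ on $B_d(x^{(1)}_{a,i})$, and (to clear the gradient term when $N=3$) against $u^{(l)}_a$ as well, one gets, exactly as in the derivation of \eqref{8-19-3}, a local identity of the shape
\[
\int_{B_d(x^{(1)}_{a,i})}\bigl(2\delta_a^2 V+\delta_a^2\langle\nabla V,x-x^{(1)}_{a,i}\rangle-2\mu^{(l)}_a\delta_a^2\bigr)(u^{(l)}_a)^2-\bigl(2-\tfrac N2\bigr)\int_{B_d(x^{(1)}_{a,i})}(u^{(l)}_a)^4=O\bigl(e^{-\gamma/\delta_a}\bigr),
\]
the right-hand side absorbing the boundary integrals over $\partial B_d(x^{(1)}_{a,i})$, which are exponentially small since $u^{(l)}_a$ and $\nabla u^{(l)}_a$ decay like $e^{-\theta|x-x^{(1)}_{a,i}|/\delta_a}$ there by Proposition~\ref{prop1-1}, using that the peaks satisfy $|x^{(l)}_{a,i}-x^{(1)}_{a,i}|=O(\delta_a^2)$ by \eqref{8-27-38}.

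Next I would subtract the $l=2$ identity from the $l=1$ one, sum over $i$, and divide by $\|u^{(1)}_a-u^{(2)}_a\|_{L^\infty(\R^N)}$; the factorizations $(u^{(1)}_a)^2-(u^{(2)}_a)^2=\|u^{(1)}_a-u^{(2)}_a\|_{L^\infty}(u^{(1)}_a+u^{(2)}_a)\xi_a$ and $(u^{(1)}_a)^4-(u^{(2)}_a)^4=\|u^{(1)}_a-u^{(2)}_a\|_{L^\infty}(u^{(1)}_a+u^{(2)}_a)\bigl((u^{(1)}_a)^2+(u^{(2)}_a)^2\bigr)\xi_a$ make $\xi_a$ appear, and the potential terms produce exactly the left-hand side of \eqref{8-19-10}. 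Splitting off one copy of the quartic sum and rewriting $(u^{(1)}_a)^2+(u^{(2)}_a)^2=(u^{(1)}_a-u^{(2)}_a)^2+2u^{(1)}_au^{(2)}_a$, while extending the sums over balls to integrals over $\R^N$ modulo $O(e^{-\gamma/\delta_a})$ via the decay \eqref{a2---5} of $\xi_a$, turns the $(2-\tfrac N2)$-weighted quartic sum into the $(1-\tfrac N2)$-weighted one displayed in \eqref{8-19-10}, the term $\int_{\R^N}(u^{(1)}_a+u^{(2)}_a)(u^{(1)}_a-u^{(2)}_a)^2\xi_a$, and a cross term $2\int_{\R^N}u^{(1)}_au^{(2)}_a(u^{(1)}_a+u^{(2)}_a)\xi_a$. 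For the Lagrange-parameter contribution $-2\delta_a^2\sum_i\int_{B_d(x^{(1)}_{a,i})}\bigl(\mu^{(1)}_a(u^{(1)}_a)^2-\mu^{(2)}_a(u^{(2)}_a)^2\bigr)/\|u^{(1)}_a-u^{(2)}_a\|_{L^\infty}$, I split off the part proportional to $(u^{(1)}_a)^2-(u^{(2)}_a)^2$: it equals $-2\delta_a^2\mu^{(1)}_a\int_{\R^N}(u^{(1)}_a+u^{(2)}_a)\xi_a+O(e^{-\gamma/\delta_a})=O(e^{-\gamma/\delta_a})$, because the common constraint \eqref{a1-12-11} gives $\int_{\R^N}(u^{(1)}_a+u^{(2)}_a)\xi_a=0$ and $\delta_a^2\mu^{(l)}_a=-1+O(\delta_a^2)$ by \eqref{abc8-29-1}.

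Finally I would recombine the two leftover pieces — the remainder of the Lagrange-parameter term, proportional to $\mu^{(1)}_a-\mu^{(2)}_a$, and the cross term — by means of a Green-type relation: testing the $u^{(1)}_a$-equation against $u^{(2)}_a$ and the $u^{(2)}_a$-equation against $u^{(1)}_a$ and subtracting yields $\int_{\R^N}\bigl((u^{(1)}_a)^3u^{(2)}_a-(u^{(2)}_a)^3u^{(1)}_a\bigr)=-\delta_a^2(\mu^{(1)}_a-\mu^{(2)}_a)\int_{\R^N}u^{(1)}_au^{(2)}_a$, so that $2\int_{\R^N}u^{(1)}_au^{(2)}_a(u^{(1)}_a+u^{(2)}_a)\xi_a$ gets expressed through $\mu^{(1)}_a-\mu^{(2)}_a$; using the constraint \eqref{a1-12-11} once more (in the forms $\int_{\R^N}u^{(1)}_au^{(2)}_a=a\delta_a^2-\tfrac12\int_{\R^N}(u^{(1)}_a-u^{(2)}_a)^2$ and $\int_{\R^N}u^{(1)}_a\xi_a=\tfrac12\|u^{(1)}_a-u^{(2)}_a\|_{L^\infty}^{-1}\int_{\R^N}(u^{(1)}_a-u^{(2)}_a)^2$) these collapse into the single term $2(\mu^{(2)}_a-\mu^{(1)}_a)\delta_a^2\int_{\R^N}u^{(1)}_a\xi_a$ modulo $O(e^{-\gamma/\delta_a})$, which assembles to \eqref{8-19-10}. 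I expect the main obstacle to be precisely this last step: one has to invoke the normalization $\int_{\R^N}(u^{(l)}_a)^2=a\delta_a^2$ together with the Green-type relation in tandem so that the several $O(\delta_a^2)$-sized pieces cancel down to the single displayed term, whereas the boundary integrals and the passage from ball-sums to $\R^N$-integrals are controlled uniformly and harmlessly by the exponential-decay estimates \eqref{2--5} and \eqref{a2---5}.
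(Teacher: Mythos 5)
Your proposal is correct and its skeleton coincides with the paper's proof: the local Pohozaev identity \eqref{06-09-30} for each $u_a^{(l)}$ on each ball $B_d(x^{(1)}_{a,i})$, subtraction and division by $\|u_{a}^{(1)}-u_{a}^{(2)}\|_{L^{\infty}(\R^N)}$, exponential smallness of the boundary terms via \eqref{2--5} and \eqref{a2---5}, the constraint identity $\int_{\R^N}(u_a^{(1)}+u_a^{(2)})\xi_a=0$, and the rewriting $(u^{(1)}_a)^2+(u^{(2)}_a)^2=(u^{(1)}_a-u^{(2)}_a)^2+2u^{(1)}_au^{(2)}_a$ that lowers the coefficient $2-\frac N2$ to $1-\frac N2$. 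The one genuinely different ingredient is how you recombine the two leftover pieces (the term proportional to $\mu^{(1)}_a-\mu^{(2)}_a$ and the cross term $2\int_{\R^N} u^{(1)}_au^{(2)}_a(u^{(1)}_a+u^{(2)}_a)\xi_a$): the paper's \eqref{06-10-2} is obtained by differencing the energy identities $a\mu^{(l)}_a\delta_a^4=\delta_a^2\int_{\R^N}\bigl(|\nabla u^{(l)}_a|^2+V(u^{(l)}_a)^2\bigr)-\int_{\R^N}(u^{(l)}_a)^4$ and then eliminating the gradient term using the weak formulations tested against $\xi_a$, whereas you use the reciprocity relation from testing each equation against the other solution, which avoids the gradient terms altogether since they cancel identically; this is a slightly more economical derivation of the same fact. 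One caveat: when either computation is carried out carefully, the leftover equals $2(\mu^{(1)}_a-\mu^{(2)}_a)\delta_a^2\int_{\R^N}u^{(1)}_a\xi_a$, i.e.\ the opposite sign to the one printed in \eqref{8-19-10}; the root is the last line of \eqref{06-10-2}, where $\mu^{(1)}_a\int u^{(1)}_a\xi_a+\mu^{(2)}_a\int u^{(2)}_a\xi_a=(\mu^{(1)}_a-\mu^{(2)}_a)\int u^{(1)}_a\xi_a$ because $\int_{\R^N}u^{(2)}_a\xi_a=-\int_{\R^N}u^{(1)}_a\xi_a$. Your claim to land exactly on the displayed sign therefore hides a compensating slip in the final assembly, but the point is immaterial: this term is shown in \eqref{8-28-42} to be $O(\delta_a^{N+4})$ and is discarded in all subsequent uses of the lemma.
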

\begin{proof}
Since $u_a^{(1)}$ and $u_a^{(2)}$ are two $k$-peak solutions of \eqref{1-12-11}--\eqref{a1-12-11}, then  similar to  \eqref{8-19-3},  we have following local Pohozaev identities:
 \begin{equation}\label{06-09-30}
 \begin{split}
 \int_{B_d(x^{(1)}_{a,i})} &\delta_a^2\big(2V(x)+\langle \nabla V(x), x-x^{(1)}_{a,i}\rangle \big)  (u_a^{(l)})^2
\\=& \int_{B_d(x^{(1)}_{a,i})} \big[2\mu_a^{(l)} \delta_a^2(u_a^{(l)})^2+(2-\frac{N}{2})(u_a^{(l)})^4\big]
+\delta_a^2\int_{\partial B_d(x^{(1)}_{a,i})}W^{(l)}(x)d\sigma,
\end{split}\end{equation}
where
\begin{equation*}
\begin{split}
W^{(l)}(x)=&-N\frac{\partial u^{(l)}_a}{\partial\nu_i}\langle x-x^{(1)}_{a,i},\nabla u^{(l)}_{a}\rangle
+\langle x-x^{(1)}_{a,i},\nu_i\rangle  |\nabla u^{(l)}_{a}|^2\\&
+\langle x-x^{(1)}_{a,i},\nu_i\rangle  \big[\big(V(x)-
\mu_a^{(l)}\big)\big(u^{(l)}_{a}\big)^2
-\frac{1}{2\delta_a^2}\big(u^{(l)}_{a}\big)^4\big].
\end{split}
\end{equation*}
Then \eqref{06-09-30} implies
 \begin{equation}\label{06-10-3}
 \begin{split}
 \int_{B_d(x^{(1)}_{a,i})} &\delta_a^2\big(2V(x)+\langle \nabla V(x), x-x^{(1)}_{a,i}\rangle \big)  (u_a^{(1)}+u_a^{(2)})\xi_a
\\=&\frac{2\delta_a^2 (\mu_{a}^{(1)}-\mu_{a}^{(2)})}
{\|u_{a}^{(1)}-u_{a}^{(2)}\|_{L^{\infty}(\R^N)}}\int_{B_d(x^{(1)}_{a,i})}\big(u_a^{(1)}\big)^2
-2\mu_{a}^{(2)} \delta_a^2
 \int_{B_d(x^{(1)}_{a,i})} (u_a^{(1)}+u_a^{(2)})\xi_a\\&
 + \big(2-\frac{N}{2}\big)\int_{B_d(x^{(1)}_{a,i})} (u_a^{(1)}+u_a^{(2)})\big((u_a^{(1)})^2
+(u_a^{(2)})^2\big)\xi_a
+ J_{a,i},
\end{split}\end{equation}
where
$J_{a,i}:=
\frac{ \delta_a^2}
{\|u_{a}^{(1)}-u_{a}^{(2)}\|_{L^{\infty}(\R^N)}}\displaystyle\int_{\partial B_d(x^{(1)}_{a,i})}\big(W^{(1)}(x)-W^{(2)}(x)\big)d\sigma.$

Next, we calculate the term $J_{a,i}$.
\begin{equation*}
\begin{split}
J_{a,i}=& -2\delta_a^2 \int_{\partial B_d(x^{(1)}_{a,i})}
\big[\frac{\partial u^{(1)}_a}{\partial\nu_i}\langle x-x^{(1)}_{a,i},\nabla \xi_{a}\rangle+
\frac{\partial \xi_a}{\partial\nu_i}\langle x-x^{(1)}_{a,i},\nabla u^{(1)}_a\rangle\big]\\&
+2\delta_a^2 \int_{\partial B_d(x^{(1)}_{a,i})}\langle x-x^{(1)}_{a,i},\nu_i\rangle \big[
 \nabla (u^{(1)}_{a}+u^{(2)}_{a}) \cdot  \nabla \xi_a+\big(V(x)-
\mu_a^{(1)} \big)
 (u^{(1)}_{a}+u^{(2)}_{a}) \xi_a  \big]
\\&
+\frac{\delta_a^2 (\mu_{a}^{(1)}-\mu_{a}^{(2)})}
{\|u_{a}^{(1)}-u_{a}^{(2)}\|_{L^{\infty}(\R^N)}}\int_{\partial B_d(x^{(1)}_{a,i})}\langle x-x^{(1)}_{a,i},\nabla u^{(1)}_a\rangle\big(u_a^{(2)}\big)^2\\&
-\frac{1}{2}\int_{\partial B_d(x^{(1)}_{a,i})}\langle x-x^{(1)}_{a,i},\nabla u^{(1)}_a\rangle(u_a^{(1)}+u_a^{(2)})\big((u_a^{(1)})^2
+(u_a^{(2)})^2\big)\xi_a\\=&
O\Big(e^{-\frac{\gamma}{\delta_a}}\Big).
\end{split}\end{equation*}
Summing  \eqref{06-10-3} from $i=1$ to $i=k$ and using \eqref{2--5}, \eqref{a2---5}, we find
 \begin{equation}\label{06-10--3}
 \begin{split}
 \sum^k_{i=1}&\int_{B_d(x^{(1)}_{a,i})}\delta_a^2\big(2V(x)+\langle \nabla V(x), x-x^{(1)}_{a,i}\rangle \big)  (u_a^{(1)}+u_a^{(2)})\xi_a
\\=&\big(2-\frac{N}{2}\big)\int_{\R^N} (u_a^{(1)}+u_a^{(2)})\big((u_a^{(1)})^2
+(u_a^{(2)})^2\big)\xi_a\\&+\frac{2\delta_a^2 (\mu_{a}^{(1)}-\mu_{a}^{(2)})}
{\|u_{a}^{(1)}-u_{a}^{(2)}\|_{L^{\infty}(\R^N)}}\int_{\R^N}\big(u_a^{(1)}\big)^2
 +
O\Big(e^{-\frac{\gamma}{\delta_a}}\Big).
\end{split}\end{equation}
Then from \eqref{06-10-2}  and  \eqref{06-10--3}, we deduce \eqref{8-19-10}.
\end{proof}

Let $\gamma_{a,i,j}$ be as in \eqref{aaaaaluo--1}, using $| \bar \xi_{a,i}|\le 1$, we find
\begin{equation}
\gamma_{a,i,j}= \frac{
   \bigl\langle  \bar \xi_{a,i}, \varphi_j\bigr\rangle}{\|\varphi_j\|^2} =O\bigl(\|\bar \xi_{a,i}\|\bigr)= O(1),~ j=0,1,\cdots,N.
\end{equation}

\begin{Lem}
For $N=2,3$, it holds
\begin{equation}\label{8-28-44}
\gamma_{a,i,0}=o(1), ~\mbox{for}~i=1,\cdots,k.
\end{equation}
\end{Lem}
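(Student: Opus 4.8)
The plan is to reduce \eqref{8-28-44} to the vanishing of a single constant and then to kill that constant with a global identity. First I would pass to a subsequence so that $\bar\xi_{a,i}\to\xi_i$ in $C^1_{\mathrm{loc}}(\R^N)$ (as in the preceding lemma) and $\gamma_{a,i,j}\to\gamma_{i,j}$ for all $i,j$. Rescaling \eqref{a2---5} gives $|\bar\xi_{a,i}(x)|\le Ce^{-\theta|x|}$ for $|x|\ge R$, so $\xi_i\in H^1(\R^N)$, the limits $\gamma_{a,i,j}=\langle\bar\xi_{a,i},\psi_j\rangle/\|\psi_j\|^2\to\langle\xi_i,\psi_j\rangle/\|\psi_j\|^2$ are legitimate (dominated convergence against the exponentially decaying $\psi_j$), and passing to the limit in \eqref{aaaaaluo--1} together with Proposition~\ref{p-1-12-11} ($\|\tilde\xi_{a,i}\|=O(\delta_a^4)$) yields $\xi_i=\sum_{j=0}^N\gamma_{i,j}\psi_j$. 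Writing $L_0:=-\Delta+1-3Q^2$ and using $L_0\psi_0=-Q$, $L_0\psi_j=0$ ($j\ge1$), and the nondegeneracy of $Q$ (so $\ker L_0=\mathrm{span}\{\partial_1Q,\dots,\partial_NQ\}$), substitution of $\xi_i$ into the limiting linear system satisfied by the $\xi_i$ gives $\gamma_{i,0}=\tfrac{2}{ka_*}\sum_{l=1}^k\int_{\R^N}Q^3\xi_l$, which is \emph{independent of $i$}; call it $\gamma_0$. Hence \eqref{8-28-44} is equivalent to $\gamma_0=0$.

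For $N=3$ I would use the $L^2$ constraint. Both $u_a^{(l)}$ satisfy \eqref{a1-12-11} with the same right-hand side, so $\int_{\R^3}\big((u_a^{(1)})^2-(u_a^{(2)})^2\big)=0$, i.e.\ $\int_{\R^3}(u_a^{(1)}+u_a^{(2)})\xi_a=0$. Expanding $u_a^{(l)}$ by \eqref{8-20-1}, rescaling near each peak, and absorbing the remainders via \eqref{8-27-26}, \eqref{8-27-38} and \eqref{a2---5}, this becomes $\sum_{i=1}^k\int_{\R^3}Q\,\bar\xi_{a,i}=o(1)$, so $\sum_{i=1}^k\int_{\R^3}Q\xi_i=0$ in the limit. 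Since $\int_{\R^N}Q\psi_0=\tfrac{2-N}{4}a_*$ and $\int_{\R^N}Q\,\partial_jQ=0$, for $N=3$ this reads $-\tfrac{k}{4}\gamma_0a_*=0$, hence $\gamma_0=0$.

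For $N=2$ the identity $\int_{\R^2}(u_a^{(1)}+u_a^{(2)})\xi_a=0$ carries no information about $\gamma_0$, because $\int_{\R^2}Q\psi_0=0$; instead I would use the local Pohozaev identity \eqref{8-19-10}, whose $(1-\tfrac N2)$-term vanishes when $N=2$. Taylor-expanding $2V(x)+\langle\nabla V(x),x-x_{a,i}^{(1)}\rangle$ to second order about $x_{a,i}^{(1)}$, rescaling by $\delta_a$, and using $V(x_{a,i}^{(1)})-V_i=O(\delta_a^4)$, $\nabla V(x_{a,i}^{(1)})=O(\delta_a^2)$ (Lemma~\ref{lem-1-1-8} with \eqref{abc8-29-1}), $\int_{\R^2}Q\xi_i=0$, and the parity identity $\int_{\R^2}\langle\nabla^2V(b_i)x,x\rangle\,Q\psi_0=-\tfrac14\Delta V(b_i)\int_{\R^2}|x|^2Q^2$ (which uses \eqref{06-13-1}), the left-hand side of \eqref{8-19-10} equals $-\gamma_0\big(\sum_{i=1}^k\Delta V(b_i)\big)\big(\int_{\R^2}|x|^2Q^2\big)\,\delta_a^{N+4}+o(\delta_a^{N+4})$, the translation components $\gamma_{i,j}$ ($j\ge1$) dropping out at this order by oddness. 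On the right-hand side, \eqref{8-27-38} gives $|x_{a,i}^{(1)}-x_{a,i}^{(2)}|=O(\delta_a^4)$ and \eqref{8-27-26} gives $\|v_a^{(l)}\|=O(\delta_a^{N/2+4})$, so $\|u_a^{(1)}-u_a^{(2)}\|_{L^\infty}=O(\delta_a^3)$ and, via \eqref{06-10-1}, $\delta_a^2|\mu_a^{(1)}-\mu_a^{(2)}|=O(\|u_a^{(1)}-u_a^{(2)}\|_{L^\infty})$; consequently the $\mu$-difference term and the cubic term $\int(u_a^{(1)}+u_a^{(2)})(u_a^{(1)}-u_a^{(2)})^2\xi_a$ are both $o(\delta_a^{N+4})$. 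Comparing orders forces $\gamma_0\sum_{i=1}^k\Delta V(b_i)=0$, and since $\sum_{i=1}^k\Delta V(b_i)\ne0$ by hypothesis, $\gamma_0=0$, which gives \eqref{8-28-44}.

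The step I expect to be the main obstacle is this last error bookkeeping for $N=2$: after inserting $\xi_i=\gamma_0\psi_0+\sum_{j\ge1}\gamma_{i,j}\psi_j$ one must verify that, apart from the term proportional to $\gamma_0\sum_i\Delta V(b_i)$, nothing of size $\delta_a^{N+4}$ survives in \eqref{8-19-10} — the $\mu$-difference term, the term $\int(u_a^{(1)}+u_a^{(2)})(u_a^{(1)}-u_a^{(2)})^2\xi_a$, the contributions of $v_a^{(l)}$, and the discrepancy between $x_{a,i}^{(1)}$ and $x_{a,i}^{(2)}$ all have to be pushed below order $\delta_a^{N+4}$, which is exactly the purpose of the sharpened estimates \eqref{abc8-29-1}, \eqref{8-27-38}, \eqref{8-27-26} and \eqref{06-10-1}. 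By contrast, the convergence $\gamma_{a,i,j}\to\gamma_{i,j}$ and the identification of the limiting coefficient of $\psi_0$ are routine once the decay estimate \eqref{a2---5} is available.
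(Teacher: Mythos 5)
Your proof is correct and, for the decisive case $N=2$, follows essentially the same route as the paper: both reduce the claim to a single constant $\gamma_0$ by showing $\gamma_{i,0}$ is independent of $i$ through the limiting linear system (this is exactly the paper's Lemma~\ref{lem-bbb}), and both then extract the coefficient of $\gamma_0$ at order $\delta_a^{N+4}$ in the local Pohozaev identity \eqref{8-19-10}, which is a nonzero multiple of $\sum_{i}\Delta V(b_i)\int_{\R^2}|x|^2Q^2$. Where you genuinely diverge is $N=3$: you dispatch it with the normalization identity $\int_{\R^3}(u_a^{(1)}+u_a^{(2)})\xi_a=0$ localized at each peak, together with $\int_{\R^N}Q\psi_0=\tfrac{2-N}{2}a_*\neq 0$ for $N=3$, whereas the paper runs both dimensions through \eqref{8-19-10}, where for $N=3$ the dominant balance comes from the cubic term with coefficient proportional to $(2-N)a_*$. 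These are the same nonvanishing in disguise --- the paper's own \eqref{06-10-2} already rests on that normalization identity --- but your version avoids all the boundary-term and $\mu$-difference bookkeeping in dimension three, which is a real (if modest) simplification. Your error accounting for $N=2$ ($|x_{a,i}^{(1)}-x_{a,i}^{(2)}|=O(\delta_a^4)$ from \eqref{8-27-38}, $\|v_a^{(l)}\|_{\delta_a}=O(\delta_a^{N/2+4})$ from \eqref{8-27-26}, and the resulting bounds pushing the $\mu$-difference term and $\int(u_a^{(1)}+u_a^{(2)})(u_a^{(1)}-u_a^{(2)})^2\xi_a$ below order $\delta_a^{N+4}$) matches the paper's \eqref{8-28-42}--\eqref{8-28-43}. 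A few of your explicit constants are off by harmless factors of two (e.g.\ $\int_{\R^N}Q\psi_0=\tfrac{2-N}{2}a_*$, not $\tfrac{2-N}{4}a_*$), but this is immaterial since only the nonvanishing of the relevant coefficients is used.
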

\begin{proof}
First, we have
\begin{equation}\label{9-9-1}
u_a^{(2)}(\delta_a x+x^{(1)}_{a,i})=
 Q(x)
+O\Big(\frac{|x^{(1)}_{a,i}-x^{(2)}_{a,i}|}{\delta_a}|\nabla Q(x)|\Big)
+v_a^{(2)}(\delta_a x+x^{(1)}_{a,i}).
\end{equation}
Then from \eqref{8-27-38},  \eqref{8-27-26}, \eqref{8-20-1} and \eqref{9-9-1}, we know
\begin{equation}\label{aaa8-28-41}
\begin{split}
\int_{B_d(x^{(1)}_{a,i})}&\big(2V(x)+\langle \nabla V(x), x-x^{(1)}_{a,i}\rangle \big) (u_a^{(1)}+u_a^{(2)})\xi_a
\\
=&2\int_{B_d(x^{(1)}_{a,i})}\big(2(V(x)-V_i)+\langle \nabla V(x), x-x^{(1)}_{a,i}\rangle \big)
 Q_{\delta_a,x_{a,i}^{(1)}} \xi_{a}\\&
 +4V_i\int_{B_d(x^{(1)}_{a,i})}  (u_a^{(1)}+u_a^{(2)}) \xi_{a}
 +O\big(\delta_a^{N+4}\big).
\end{split}
\end{equation}
Also, from \eqref{8-27-38}, \eqref{aaaaaluo--1} and  \eqref{8-27-2}, we find
\begin{equation}\label{ad8-28-41}
\begin{split}
\int_{B_d(x^{(1)}_{a,i})}&\big[V(x)-V_i\big] Q_{\delta_a,x_{a,i}^{(1)}}\xi_{a}
\\=&
\delta_a^N\int_{\R^N}\big[V(\delta_a x+x_{a,i}^{(1)})-V_i\big]
 Q(x)\Big(
 \sum^N_{j=0}\gamma_{a,i,j}\psi_j\Big)+O\big(\delta_a^{N+4}\big)\\
 =&
-\frac{B}{2}\Delta V(b_i)\gamma_{a,i,0}\delta^{N+2}_a +O\big(\delta_a^{N+3}\big),
\end{split}
\end{equation}
where $B$ is the constant in \eqref{luopeng10}.
Similar to the estimate of \eqref{ad8-28-41}, we can find
\begin{equation}\label{9-9-22}
\begin{split}
\int_{B_d(x^{(1)}_{a,i})}&\langle \nabla V(x), x-x_{a,i}^{(1)}\rangle Q_{\delta_a,x_{a,i}^{(1)}}\xi_{a}
=-\frac{B}{2}\Delta V(b_i)\gamma_{a,i,0}\delta^{N+2}_a +O(\delta_a^{N+3}\big).
 \end{split}
\end{equation}
So from \eqref{ad8-28-41} and \eqref{9-9-22}, we get
\begin{equation}\label{8-28-41}
\begin{split}
\mbox{LHS of}~ \eqref{8-19-10}=-\frac{3B}{2}\Delta V(b_i)\gamma_{a,i,0}\delta^{N+4}_a
+O(\delta_a^{N+5})+
4V_i\delta^{2}_a\int_{B_d(x^{(1)}_{a,i})}  (u_a^{(1)}+u_a^{(2)}) \xi_{a}.
\end{split}
\end{equation}
Next we know 
\begin{equation}\label{8-28-41}
\begin{split}
 \int_{B_d(x^{(1)}_{a,i})}& (u_a^{(1)}+u_a^{(2)})\big((u_a^{(1)})^2
+(u_a^{(2)})^2\big)\xi_a\\
=&\Big(\frac{4-N}{4}+o(1)\Big)\gamma_{a,i,0}\delta^{N}_a\int_{\R^N}Q^4=\big(4a_*+o(1)\big)\gamma_{a,i,0}\delta^{N}_a.
\end{split}
\end{equation}
Also from \eqref{9-9-1}, we get
 \begin{equation*}
\begin{split}
\int_{B_d(x^{(1)}_{a,i})} & (u_a^{(1)}+u_a^{(2)}) \xi_{a}
\\=& 2\gamma_{a,i,0}\delta^{N}_a \int_{\R^N}Q(Q+x\cdot \nabla Q)+O\big(|x^{(1)}_a-x^{(2)}_a|\delta_a^{N-1} +\delta_a^{\frac{N}{2}} \|v_a^{(2)}\|_{\delta_a}\big)\\=&
\big(2-N\big)a_*\gamma_{a,i,0}\delta^{N}_a +
 O\big(\delta_a^{N+3}\big),
 \end{split}
\end{equation*}
which, together with  \eqref{8-28-41},  gives
\begin{equation}\label{a8-28-41}
\begin{split}
\mbox{LHS of}~ \eqref{8-19-10}=
2\big(2-N\big)\big(a_*+o(1)\big)\gamma_{a,i,0}\delta^{N}_a -\frac{3B}{2}\Delta V(b_i)\gamma_{a,i,0}\delta^{N+4}_a
+O(\delta_a^{N+5}).
\end{split}
\end{equation}
Also by \eqref{abc8-29-1}, \eqref{8-27-38} and \eqref{8-27-26}, we find
\begin{equation}\label{8-28-42}
\begin{split}
(\mu_a^{(2)}-\mu_a^{(1)})\int_{\R^N}u_a^{(2)}\xi_a
=O\big(\delta_a^{N+4}\big).
\end{split}
\end{equation}
On the other hand, by \eqref{8-27-26}, \eqref{8-20-1}, \eqref{a2---5}, \eqref{aaaaaluo--1}, \eqref{8-27-2}, \eqref{9-9-1} and \eqref{aaa8-28-41}, we find
\begin{equation}\label{8-28-43}
\begin{split}
\int_{\R^N} &(u_a^{(1)}+u_a^{(2)})(u_a^{(1)}-u_a^{(2)})^2\xi_a
\\=&\delta_a^N O\big(\delta_a^{-2}|x^{(1)}_a-x^{(2)}_a|^2 \big)+O\big( \|v^{(1)}_{a}-v^{(2)}_{a}\|_{\delta_a}^2\big)
=O\big(\delta_a^{N+6}\big).
\end{split}
\end{equation}
Then \eqref{8-19-10}, \eqref{a8-28-41}--\eqref{8-28-43} and Lemma \ref{lem-bbb} imply
\begin{equation*} 
\Big(4\big(2-N\big)\big(a_*+o(1)\big) - {3B} \delta^{4}_a\big(\sum^k_{l=1} \Delta V(b_l)\big)\Big)\gamma_{a,i,0}
=O(\delta_a^{5}),~\mbox{for}~i=1,\cdots,k,
\end{equation*}
which gives \eqref{8-28-44}.
\end{proof}

\begin{Prop}\label{p-2-12-11}
It holds
\begin{equation}\label{add8-28-44}
\gamma_{a,i,j}=o(1), \quad i=1,\cdots,k,~~j=0,1,\cdots,N.
\end{equation}
\end{Prop}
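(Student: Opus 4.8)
The plan is to derive \eqref{add8-28-44} from a family of local Pohozaev identities for the normalized difference $\xi_a$, which reduces the claim to a block--triangular linear system for the $\gamma_{a,i,j}$. Since \eqref{8-28-44} already provides $\gamma_{a,i,0}=o(1)$ (and, retracing that argument, even $\gamma_{a,i,0}=O(\delta_a)$), it remains to bound the coefficients of the translation kernel functions $\psi_1,\dots,\psi_N=\partial_1Q,\dots,\partial_NQ$. As the degeneracy of $V$ near $\Gamma_i$ is anisotropic, I would first re-expand the translation part of $\bar\xi_{a,i}$ in the moving orthonormal frame $(\nu_{a,i},\tau_{a,i,1},\dots,\tau_{a,i,N-1})$ of $\Gamma_{t_a,i}$ at $x_{a,i}^{(1)}$, writing $\sum_{j=1}^{N}\gamma_{a,i,j}\psi_j=\gamma_{a,i,\nu}\langle\nu_{a,i},\nabla Q\rangle+\sum_{m=1}^{N-1}\gamma_{a,i,\tau_m}\langle\tau_{a,i,m},\nabla Q\rangle$; since $(\gamma_{a,i,\nu},\gamma_{a,i,\tau_1},\dots,\gamma_{a,i,\tau_{N-1}})$ is an invertible linear image of $(\gamma_{a,i,1},\dots,\gamma_{a,i,N})$, proving these new coefficients are $o(1)$ is equivalent to \eqref{add8-28-44}.

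\emph{Normal mode.} Multiplying \eqref{1-12-11} for $u_a^{(1)}$ and for $u_a^{(2)}$ by $\langle\nu_{a,i},\nabla u_a^{(l)}\rangle$, integrating over $B_d(x_{a,i}^{(1)})$, subtracting, and using \eqref{2--5} and \eqref{a2---5} (as in the passage from \eqref{06-09-30} to \eqref{06-10-3}), one gets
\begin{equation*}
\delta_a^2\int_{B_d(x_{a,i}^{(1)})}D_{\nu_{a,i}}V(x)\,\bigl(u_a^{(1)}+u_a^{(2)}\bigr)\,\xi_a=O\bigl(e^{-\gamma/\delta_a}\bigr).
\end{equation*}
Substituting $u_a^{(1)}+u_a^{(2)}=2\,\tilde Q_{\delta_a,x_{a,i}^{(1)}}+O(\delta_a^{3})$ (by \eqref{8-27-26} and $|x_{a,i}^{(1)}-x_{a,i}^{(2)}|=O(\delta_a^4)$, a consequence of \eqref{8-22-3}), rescaling around $x_{a,i}^{(1)}$, and inserting $\bar\xi_{a,i}=\sum_{j=0}^{N}\gamma_{a,i,j}\psi_j+\tilde\xi_{a,i}$ with $\|\tilde\xi_{a,i}\|=O(\delta_a^4)$ from \eqref{8-27-2}, the crucial point is that \textup{($V$)} forces $\nabla V\equiv0$ on $\Gamma_i$, so $D^2V(b_i)$ annihilates every tangential direction and $D^2V(b_i)=\frac{\partial^2V(b_i)}{\partial\nu_i^2}\,\nu_i\otimes\nu_i$; combined with Lemma~\ref{lem-1-1-8} this gives $D_{\nu_{a,i}}V(x)=\frac{\partial^2V(b_i)}{\partial\nu_i^2}\langle x-x_{a,i}^{(1)},\nu_{a,i}\rangle+O(\delta_a^{2}+|x-x_{a,i}^{(1)}|^{2})$. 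By the parity relations $\int_{\R^N}Q\psi_j=0$ $(j\ge1)$, $\int_{\R^N}\langle y,\nu_i\rangle Q\langle\nu_i,\nabla Q\rangle=-\frac12a_*$ and $\int_{\R^N}\langle y,\nu_i\rangle Q\psi_j=0$ for the other $j$, everything but the $\gamma_{a,i,\nu}$--term falls below the leading order, so
\begin{equation*}
a_*\,\frac{\partial^2V(b_i)}{\partial\nu_i^2}\,\gamma_{a,i,\nu}\,\delta_a^{N+3}=o\bigl(\delta_a^{N+3}\bigr),
\end{equation*}
hence $\gamma_{a,i,\nu}=o(1)$ because $\frac{\partial^2V(b_i)}{\partial\nu_i^2}\ne0$.

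\emph{Tangential modes.} With $G(x)=\langle\nabla V(x),\tau_{a,i,m}\rangle$, the analogous subtraction (multiplier $\langle\tau_{a,i,m},\nabla u_a^{(l)}\rangle$) yields $\delta_a^2\int_{B_d(x_{a,i}^{(1)})}G(x)(u_a^{(1)}+u_a^{(2)})\xi_a=O(e^{-\gamma/\delta_a})$. Now $G(x_{a,i}^{(1)})=0$; since $\nabla V\equiv0$ on $\Gamma_i$ and $|x_{a,i}^{(1)}-\bar x_{a,i}^{(1)}|=O(\delta_a^2)$ one has $\nabla G(x_{a,i}^{(1)})=O(\delta_a^2)$; and the quadratic Taylor term of $G$ pairs with $Q\psi_j$ to zero for every $j$, because $\int_{\R^N}y_ly_p\,Q\,\partial_sQ=-\frac12\bigl(\delta_{sl}\!\int_{\R^N}y_pQ^2+\delta_{sp}\!\int_{\R^N}y_lQ^2\bigr)=0$. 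Hence $G$ must be expanded to third order, where the cubic term pairs with $2Q\langle\tau_{a,i,p},\nabla Q\rangle$ through $\int_{\R^N}y_ly_qy_r\,Q\,\partial_sQ=-\frac12B\bigl(\delta_{sl}\delta_{qr}+\delta_{sq}\delta_{lr}+\delta_{sr}\delta_{lq}\bigr)$ ($B$ as in \eqref{luopeng10}), producing the ambient tangential Hessian $D^2_{\tau_{a,i,p}\tau_{a,i,m}}(\Delta V)(x_{a,i}^{(1)})$. Assembling this with the first-order remainder $\nabla G(x_{a,i}^{(1)})=O(\delta_a^2)$, whose value is pinned down by \eqref{2-13-8}, \eqref{8-22-3} and the variation of the frame along $\Gamma_i$ and which supplies the curvature correction, and discarding the (already small) contributions of $\gamma_{a,i,0}$, of $\gamma_{a,i,\nu}$, of $\tilde\xi_{a,i}=O(\delta_a^4)$, of $v_a^{(l)}$ and of the $O(\delta_a^4)$ peak gap, I would obtain, for each $i$, the system
\begin{equation*}
\Bigl(\Bigl(\frac{\partial^2\Delta V(b_i)}{\partial\tau_{i,l}\partial\tau_{i,m}}\Bigr)_{1\le l,m\le N-1}+\frac{\partial\Delta V(b_i)}{\partial\nu_i}\,\mathrm{diag}\bigl(\kappa_{i,1},\dots,\kappa_{i,N-1}\bigr)\Bigr)\bigl(\gamma_{a,i,\tau_1},\dots,\gamma_{a,i,\tau_{N-1}}\bigr)^{T}=o(1).
\end{equation*}
By the non-singularity hypothesis in Theorem~\ref{th1.4} this forces $\gamma_{a,i,\tau_m}=o(1)$; together with $\gamma_{a,i,0}=o(1)$ from \eqref{8-28-44} and $\gamma_{a,i,\nu}=o(1)$ above, this is exactly \eqref{add8-28-44}. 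The same scheme applies for $N=2$ and $N=3$.

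\emph{Main obstacle.} The delicate part is the tangential step. Because the constant, linear and quadratic pieces of $G=\langle\nabla V,\tau_{a,i,m}\rangle$ all drop out (the first two since $\nabla V\equiv0$ on $\Gamma_i$ up to the $O(\delta_a^2)$ normal shift of $x_{a,i}^{(1)}$, the third by parity of $Q$), the governing information sits at third order, so one must (i) correctly split $D^2_{\tau_p\tau_m}(\Delta V)$ into the intrinsic Hessian of $\Delta V|_{\Gamma_i}$ and the second-fundamental-form term $\frac{\partial\Delta V(b_i)}{\partial\nu_i}\mathrm{diag}(\kappa_{i,j})$, and (ii) verify that every cross term --- in particular those carrying $\gamma_{a,i,0}$, $\gamma_{a,i,\nu}$, the $O(\delta_a^4)$ remainders $\tilde\xi_{a,i},v_a^{(l)}$, and the $O(\delta_a^4)$ displacement $|x_{a,i}^{(1)}-x_{a,i}^{(2)}|$ from \eqref{8-27-38} --- is genuinely $o(\delta_a^{N+5})$, which is tight relative to the main term of size $\delta_a^{N+5}$.
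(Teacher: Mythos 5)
Your proposal follows essentially the same route as the paper's proof: the normal and tangential Pohozaev identities $\int_{B_d(x^{(1)}_{a,i})}D_{\nu_{a,i}}V\,(u_a^{(1)}+u_a^{(2)})\xi_a$ and $\int_{B_d(x^{(1)}_{a,i})}D_{\tau_{a,i,j}}V\,(u_a^{(1)}+u_a^{(2)})\xi_a$, the parity cancellations forcing the tangential information down to third order in the Taylor expansion, the curvature identity \eqref{ab7-19-29} together with \eqref{8-22-3}/\eqref{lll} to produce the term $\frac{\partial\Delta V(b_i)}{\partial\nu_i}\,\mathrm{diag}(\kappa_{i,j})$, and the non-singularity hypothesis of Theorem~\ref{th1.4} to close the resulting linear system (your choice of the moving frame is just the paper's ``suitable rotation''). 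The only cosmetic discrepancies are an extra $\delta_a^2$ prefactor in your normal-mode identity and citing \eqref{8-22-3} where \eqref{8-27-38} is the relevant $\delta_a$-scale statement; neither affects the argument.
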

\begin{proof}
\textbf{Step 1:} To prove $\gamma_{a,i,N}=O(\delta_a)$ for $i=1,\cdots,k$.

\smallskip

Using \eqref{30-31-7}, we deduce
\begin{equation}\label{3-18}
\displaystyle \int_{B_{d}(x_{a,i}^{(1)})}\frac{\partial V(x)}{\partial  \nu_{a,i}}B_a(x)\xi_{a}=
O\big(e^{-\frac{\gamma}{\delta_a}}\big),
\end{equation}
where $\nu_{a,i}$ is the outward unit vector of $\partial B_{d}(x_{a,i}^{(1)})$ at $x$, $
B_a(x)=\displaystyle\sum^2_{l=1}u_{a}^{(l)}(x)$.

On the other hand, by \eqref{8-27-38}, we have
\begin{equation}\label{0-16-8}
\begin{split}
B_a(x)&=\Big(2 +O(\delta_a^2)\Big)
\sum^k_{i=1}Q_{\delta_a,x_{a,i}^{(1)}}(x)
+O\Big(\sum^{2}_{l=1}|v_{a}^{(l)}(x)|\Big).
\end{split}
\end{equation}
Also, from \eqref{abc8-29-1}, we find
\begin{eqnarray*}
\frac{\partial V(x_{a,i}^{(1)})}{\partial  \nu_{a,i}}=\frac{\partial V(x_{a,i}^{(1)})}{\partial  \nu_{a,i}}-
\frac{\partial V(\bar x_{a,i}^{(1)})}{\partial  \nu_{a,i}}= O\Big(\big|x_{a,i}^{(1)}-\bar x_{a,i}^{(1)}\big|\Big)=O(\delta_a^2),
\end{eqnarray*}
and
\begin{eqnarray*}
\frac{\partial^2 V(x_{a,i}^{(1)})}{\partial  \nu_{a,i}\partial  \tau_{a,i,j}}=\frac{\partial^2 V(x_{a,i}^{(1)})}{\partial  \nu_{a,i}\partial  \tau_{a,i,j}}-
\frac{\partial^2 V(\bar x_{a,i}^{(1)})}{\partial  \nu_{a,i}\partial  \tau_{a,i,j}}= O\Big(\big|x_{a,i}^{(1)}-\bar x_{a,i}^{(1)}\big|\Big)=O(\delta_a^2),~\mbox{for}~j=1,\cdots,N-1.
\end{eqnarray*}
From \eqref{2--5}, \eqref{30-31-7} and \eqref{0-16-8},  we get
\begin{equation}\label{3--18}
\begin{split}
 \displaystyle \int_{B_{d}(x_{a,i}^{(1)})}&\frac{\partial V(x)}{\partial  \nu_{a,i}}B_a(x)\xi_{a} \\=&
\int_{\R^N}  \frac{\partial V(x_{a,i}^{(1)})}{\partial  \nu_{a,i}} B_a(x)\xi_{a}+\int_{\R^N} \Bigl\langle \nabla \frac{\partial V(x_{a,i}^{(1)})}{\partial  \nu_{a,i}}, x-x_{a,i}^{(1)}\Bigr\rangle B_a(x)\xi_{a}
 +O\big(\delta_a^{N+2}\big)\\=&
-\frac{\partial^2 V(x_{a,i}^{(1)})}{\partial  \nu^2_{a,i}} a_*\gamma_{a,i,N} \delta_a^{N+1}+O\big(\delta_a^{N+2}\big).
\end{split}
\end{equation}
Then \eqref{3-18} and \eqref{3--18} imply  $\gamma_{a,i,N}=O(\delta_a)$.

\medskip

\noindent\textbf{Step 2:} To prove  $\gamma_{a,i,j}=o(1)$ for $i=1,\cdots,k$ and $j=1,\cdots,N-1$.

\smallskip

Similar to \eqref{3-18}, we have
\begin{equation}\label{3.-14}
 \int_{B_{d}(x_{a,i}^{(1)})} \frac{\partial V(y)}{\partial \tau_{a,i,j}}B_a(y)\xi_{a}=O\big(e^{-\frac{\gamma}{\delta_a}}\big),
 ~\mbox{for}~i=1,\cdots,k,~~j=1,\cdots,N-1.
 \end{equation}
Using suitable rotation, we assume that $\tau_{a,i,1} =(1, 0,\cdots, 0), \cdots$, $\tau_{a,i, N-1} =(0,\cdots,0, 1,  0)$ and $\nu_{a,i} =(0,\cdots,0,1)$.
Under the condition \textup{($V$)}, we know
\begin{equation}\label{1-16-8}
\begin{split}
 \frac{\partial V(\delta_a y+ x^{(1)}_{a,i})}{\partial \tau_{a,i,j}}=&
\delta_a\sum^N_{l=1}\frac{\partial^2 V(x^{(1)}_{a,i})}{\partial y_l \partial \tau_{a,i,j} } y_l+\frac{\delta_a^2}{2}
\sum^N_{m=1}\sum^N_{l=1}\frac{\partial^3 V(x^{(1)}_{a,i})}{\partial y_m  \partial y_l \partial \tau_{a,i,j}} y_m y_l\\&+
\frac{\delta^3_a}{6}\sum^N_{s=1}\sum^N_{m=1}
\sum^N_{l=1}\frac{\partial^4 V(x^{(1)}_{a,i})}{\partial y_s \partial y_l \partial y_m \partial \tau_{a,i,j}} y_s y_ly_m
+o\big(\delta_a^3|y|^3\big),~\mbox{in}~B_{d{\delta_a^{-1}}}(0).
\end{split}\end{equation}
By \eqref{1.6}, \eqref{8-22-3}, \eqref{8-27-26}, \eqref{aaaaaluo--1}, \eqref{0-16-8} and  the symmetry of $\varphi_j(x)$, we find, for $j=1,\cdots,N-1$,
\begin{equation}\label{2-16-8}
\begin{split}
\sum^N_{m=1}\sum^N_{l=1}&\frac{\partial^3 V(x^{(1)}_{a,i})}{\partial y_m  \partial y_l \partial \tau_{a,i,j}}\int_{B_{d\delta_a^{-1}}(0)} B_a(\delta_a y+ x^{(1)}_{a,i}) \bar \xi_{a,i} y_m y_l\\=&
2 \sum^N_{m=1}\sum^N_{l=1}\frac{\partial^3 V(x^{(1)}_{a,i})}{\partial y_m  \partial y_l \partial \tau_{a,i,j}} \int_{B_{d\delta_a^{-1}}(0)} Q\big(\sqrt{1+V_i\delta_a^2}y\big)\bar \xi_{a,i} y_m y_l+O(\delta_a^2)
\\
=& B\gamma_{a,i,0} \frac{\partial \Delta V(x^{(1)}_{a,i})}{\partial \tau_{a,i,j}} +O(\delta_a^2)=
O\big(|x^{(1)}_{a,i}-b_i|\big) +O(\delta_a^2)=O(\delta_a^2).
\end{split}
\end{equation}
Also from \eqref{add8-28-44} and \eqref{0-16-8}, we get
\begin{equation}\label{3-16-8}
\begin{split}
& \sum^N_{s=1}\sum^N_{m=1}\sum^N_{l=1}\frac{\partial^4 V(x^{(1)}_{a,i})}{\partial y_s \partial y_l \partial y_m \partial \tau_{a,i,j}}
 \int_{B_{d\delta_a^{-1}}(0)} B_a(\delta_a y+ x^{(1)}_{a,i}) \bar \xi_{a,i}y_s y_ly_m
\\=&
2 \sum^N_{s=1}\sum^N_{m=1}\sum^N_{l=1}\frac{\partial^4 V(x^{(1)}_{a,i})}{\partial y_s \partial y_l \partial y_m \partial \tau_{a,i,j}}
 \int_{B_{d\delta_a^{-1}}(0)}  Q(y)(\sum^{N-1}_{q=1}\gamma_{a,i,q})\varphi_q(y)y_s y_ly_m
+o\big(1\big)\\=&
2 \sum^{N-1}_{q=1}\gamma_{a,i,q}
 \int_{B_{d\delta_a^{-1}}(0)} Q(y) \varphi_q(y)y_q \Big[\frac{\partial^4 V(x^{(1)}_{a,i})}{\partial y^3_q \partial \tau_{a,i,j}}y_q^2
 +3 \sum^N_{l=1,l\neq q}\frac{\partial^4 V(x^{(1)}_{a,i})}{\partial y_q \partial y^2_l \partial \tau_{a, i,j}} y_l^2\Big]+o\big(1\big)\\=&
-3B\Big(\sum_{q=1}^{N-1}\frac{\partial^2 \Delta V(x^{(1)}_{a,i})}{\partial \tau_{a, i,q} \partial \tau_{a,i, j}}  \gamma_{a,i,q}\Big)+o\big(1\big)\\=&
-3B\Big(\sum_{q=1}^{N-1}\frac{\partial^2 \Delta V(b_i)}{\partial \tau_{a, i,q} \partial \tau_{a,i, j}}  \gamma_{a,i,q}\Big)+o\big(1\big).
\end{split}
\end{equation}
By \eqref{ab7-19-29},  we estimate
\[
\frac{\partial^2 V(x^{(1)}_{a,i})}{\partial y_l \partial \tau_{a,i, j} }=  - \frac{\partial V(x^{(1)}_{a,i})}{ \partial \nu_{a,i} }
\kappa_{i,l}(x^{(1)}_{a,i})\delta_{lj},\quad l,j=1,\cdots,N-1.
\]
 Since  $\frac{\partial V(\bar x^{(1)}_{a,i})}{ \partial \nu_{a,i} }=0$, from \eqref{8-22-3}, we find
\begin{equation}\label{lll}
\begin{split}
\frac{\partial^2 V(x^{(1)}_{a,i})}{\partial y_l \partial \tau_{a,i,j} }=&  -\Bigl(  \frac{\partial V(x^{(1)}_{a,i})}{ \partial \nu_{a,i} }-
\frac{\partial V(\bar x^{(1)}_{a,i})}{ \partial \nu_{a,i} }
\Bigr)
\kappa_{i,l} (x^{(1)}_{a,i})\delta_{lj} \\
=&- \frac{\partial^2 V(\bar x^{(1)}_{a,i})}{ \partial \nu^2_{a,i} } ( x^{(1)}_{a,i}-\bar x^{(1)}_{a,i})\cdot \nu_{a,i}\kappa_{i,l} (x^{(1)}_{a,i})\delta_{lj}+o(\delta_a^2)\\
=&- \frac{\partial^2 V(b_i)}{ \partial \nu^2_i } ( x^{(1)}_{a,i}-\bar x^{(1)}_{a,i})\cdot \nu_{a,i}\kappa_{i,l} (b_i)\delta_{lj}+o(\delta_a^2)\\
=&\frac{B}{2a_*}\frac{\partial  \Delta V(b_i)}{\partial \nu_i}  \delta_a^{2}\kappa_{i,l}(b_i)\delta_{lj}+o(\delta_a^2).
\end{split}
\end{equation}
Therefore from \eqref{8-27-26}, \eqref{aaaaaluo--1}, \eqref{0-16-8} and \eqref{lll}, we get
\begin{equation}\label{5-16-8}
\begin{split}
\sum^N_{l=1} &\frac{\partial^2 V(x^{(1)}_{a,i})}{\partial y_l \partial \tau_{a,i,j} }\int_{B_{d\delta_a^{-1}}(0)} B_a(\delta_a y+ x^{(1)}_{a,i}) \bar \xi_{a,i} y_l\\
=& \frac{B}{2a_*}\frac{\partial  \Delta V(b_i)}{\partial \nu_i}  \delta_a^{2}\kappa_{i,j} (b_i)\int_{B_{d\delta_a^{-1}}(0)} B_a(\delta_a y+ x^{(1)}_{a,i}) \bar \xi_{a,i} y_j
+o(\delta_a^2)\\
=& \frac{B}{a_*}\frac{\partial  \Delta V(b_i)}{\partial \nu_i}  \delta_a^{2}\kappa_{i,j}(b_i)\gamma_{a,i,j} \int_{\mathbb R^N }Q(y)
 \frac{Q(y)}{\partial y_j} y_j
+o(\delta_a^2)\\
=& -\frac{B}{2}\frac{\partial  \Delta V(b_i)}{\partial \nu_i}  \delta_a^{2}\kappa_{i,j}(b_i)\gamma_{a,i,j}
+o(\delta_a^2).
\end{split}\end{equation}
Combining \eqref{1-16-8}--\eqref{5-16-8}, we obtain
\begin{equation}\label{6-16-8}
\begin{split}
\int_{B_{d}(x_{a,i}^{(1)})} & \frac{\partial V(y)}{\partial \tau_{a,i,j}}B_a(y)\xi_{a}\\=&-\frac{B}{2}\frac{\partial  \Delta V(b_i)}{\partial \nu_i}  \delta_a^{N+3}\kappa_{i,j}(b_i)\gamma_{a,i,j}
-\frac{B}{2}\Big(\sum_{l=1}^{N-1}\frac{\partial^2 \Delta V(b_i)}{\partial \tau_{i,l} \partial \tau_{ i,j}} \gamma_{a,i,j} \Big) \delta_a^{N+3}+o(\delta_a^{N+3}).
 \end{split}\end{equation}
 From \eqref{3.-14} and \eqref{6-16-8}, we find
\[
\frac{\partial  \Delta V(b_i)}{\partial \nu_i} \kappa_{i,j}(b_i)\gamma_{a,i,j}
+\Big(\sum_{l=1}^{N-1}\frac{\partial^2 \Delta V(b_i)}{\partial \tau_{i,l} \partial \tau_{i,j}} \gamma_{a,i,l}\Big) =o(1),
\]
which implies $\gamma_{a,i,j}=o(1)$ for $i=1,\cdots,k$ and $j=1,\cdots,N-1$.

\end{proof}

\begin{proof}[\textbf{Proof of Theorem \ref{th1.4}:}]
First, for large fixed $R$,  \eqref{8-28-12} and \eqref{8-28-13} give
$$
C_{a}(x) \ge \frac12,   \;\; |g_a(x)|\le C \sum^k_{i=1}e^{\frac{|x-x^{(1)}_{a,i}|}{\delta_a}},\quad x\in \mathbb R^N \backslash  \bigcup ^k_{i=1}B_{R\delta_a}(x^{(1)}_{a,i}).$$
  Using the comparison principle, we get
\begin{equation*}
 \xi_a(x)= o(1),~\mbox{in}~ \R^N\backslash  \bigcup ^k_{i=1}B_{R\delta_a}(x^{(1)}_{a,i}).
\end{equation*}

On the other hand, it follows from \eqref{8-27-2}, \eqref{8-28-44} and \eqref{add8-28-44} that
\[
\xi_{a}(x)=o(1),~\mbox{in}~ \bigcup ^k_{i=1}B_{R\delta_a}(x^{(1)}_{a,i}).
\]
This is in contradiction with $\|\xi_{a}\|_{L^{\infty}(\R^N)}=1$.
So $u^{(1)}_{a}(x)\equiv u^{(2)}_{a}(x)$ for $a\rightarrow ka_*$ in $N=2$ or $a\searrow 0$ in $N=3$.
\end{proof}

\section*{Appendix}

\appendix

\section{Linearization}

\renewcommand{\theequation}{A.\arabic{equation}}
\begin{Lem}\label{lem-bbb}
Let $\xi_0:=(\xi_1,\cdots,\xi_k)$ be the solution of following system:
\begin{equation}\label{06-10-23}
-\Delta \xi_{i}(x)+\big(1-3Q^2(x)\big)\xi_{i}(x)=-\frac{2}{ka_*}Q(x)
\Big(\sum^k_{l=1}\int_{\R^N}Q^3(x)\xi_l(x)\Big),~\mbox{for}~i=1,\cdots,k.
\end{equation}
Then for $N=2,3$, it holds
\begin{equation}\label{06-10-25}
\xi_i(x)=  \sum^N_{j=0}\gamma_{i,j}\psi_j,
\end{equation}
where  $\gamma_{i,j}$ are some constants,
\begin{equation}\label{aaaaa}
\psi_0=Q+x\cdot\nabla Q,~\psi_j=\frac{\partial Q}{\partial x_j},~\mbox{for}~j=1,\cdots,N.
\end{equation}
 Moreover, $\gamma_{i,0}=\gamma_{l,0}$ for all $i,l=1,\cdots,k$.
\end{Lem}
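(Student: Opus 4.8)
The plan is to reduce the system \eqref{06-10-23} to the classical non-degeneracy of the ground state $Q$. Write $L_0u:=-\Delta u+u-3Q^2u$, and recall that for $N=2,3$ the kernel of $L_0$ among $H^1(\R^N)$ functions is exactly $\mathrm{span}\{\psi_1,\dots,\psi_N\}=\mathrm{span}\{\partial_1Q,\dots,\partial_NQ\}$. The one identity I would record explicitly at the outset is
\[
L_0\psi_0=-2Q .
\]
The cleanest way to get this is to differentiate the rescaled family $v_\mu(x):=\mu^{1/2}Q(\mu^{1/2}x)$, which solves $-\Delta v_\mu+\mu v_\mu=v_\mu^3$ for every $\mu>0$: differentiating in $\mu$ and evaluating at $\mu=1$ gives $L_0w=-Q$ with $w=\partial_\mu v_\mu\big|_{\mu=1}=\tfrac12\psi_0$, hence $L_0\psi_0=-2Q$. (Alternatively one checks it directly from $-\Delta Q+Q=Q^3$ using the commutator identity $[\Delta,\,x\cdot\nabla]=2\Delta$.)

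Next I would set $c:=\sum_{l=1}^k\int_{\R^N}Q^3\xi_l$, a fixed real number. Then each equation in \eqref{06-10-23} reads $L_0\xi_i=-\tfrac{2c}{ka_*}Q$, so combining with $L_0\psi_0=-2Q$ the function $\eta_i:=\xi_i-\tfrac{c}{ka_*}\psi_0$ satisfies $L_0\eta_i=0$. To invoke the non-degeneracy of $Q$ I first need $\eta_i\in H^1(\R^N)$: in the way the lemma is applied the $\xi_i$ are bounded (they arise as $C^1_{loc}$ limits of the $\bar\xi_{a,i}$ with $|\bar\xi_{a,i}|\le1$), and since the zeroth-order coefficient $1-3Q^2$ tends to $1$ at infinity while the right-hand side of \eqref{06-10-23} decays exponentially, a standard comparison-function argument yields $|\xi_i(x)|+|\nabla\xi_i(x)|\le Ce^{-\theta|x|}$; together with $\psi_0\in H^1(\R^N)$ this gives $\eta_i\in H^1(\R^N)$. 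Hence $\eta_i=\sum_{j=1}^N\gamma_{i,j}\psi_j$ for some constants, and therefore
\[
\xi_i=\frac{c}{ka_*}\psi_0+\sum_{j=1}^N\gamma_{i,j}\psi_j=\sum_{j=0}^N\gamma_{i,j}\psi_j,\qquad \gamma_{i,0}:=\frac{c}{ka_*}.
\]
Since $c$ does not depend on $i$, this is exactly \eqref{06-10-25} together with $\gamma_{i,0}=\gamma_{l,0}$ for all $i,l=1,\dots,k$.

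As a consistency check (not needed for the conclusion, but worth a remark): $\int_{\R^N}Q^3\psi_j=\tfrac14\int_{\R^N}\partial_j(Q^4)=0$ for $j=1,\dots,N$, while $\int_{\R^N}Q^3\psi_0=\int_{\R^N}Q^4+\int_{\R^N}Q^3\,x\cdot\nabla Q=\bigl(1-\tfrac N4\bigr)\int_{\R^N}Q^4=\tfrac{4-N}{4}\int_{\R^N}Q^4=\int_{\R^N}Q^2=a_*$ by \eqref{06-13-1}; hence $\int_{\R^N}Q^3\xi_l=c/k$, and summing over $l$ returns $c=c$, so $c$ is genuinely a free parameter that the lemma need not pin down.

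The only step beyond bookkeeping is invoking the non-degeneracy of $Q$ in the correct functional setting — i.e.\ making sure $\eta_i$ really lies in the space where $\ker L_0$ is exhausted by the translations $\partial_jQ$. The exponential-decay estimate for bounded solutions of \eqref{06-10-23} takes care of this, and for $N=2,3$ the non-degeneracy itself is classical; the identity $L_0\psi_0=-2Q$ is the one point I would state with care.
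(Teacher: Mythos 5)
Your proof is correct, and it is in fact a more complete version of the argument than the one the paper gives. The paper's proof proceeds in the opposite logical direction: it exhibits the particular solutions $\Psi_j=(\psi_j,\dots,\psi_j)$ and $\Phi=(\psi_0,\dots,\psi_0)$ of the system (the latter verified via \eqref{06-13-1}, which is exactly your identity $\int_{\R^N}Q^3\psi_0=a_*$, equivalently $L_0\psi_0=-2Q$), notes they are linearly independent, and then asserts \eqref{06-10-25} — without explaining why these, together with the componentwise translations, exhaust the solution space, and without specifying the function space in which the non-degeneracy of $Q$ is being invoked. Your decomposition $\eta_i:=\xi_i-\frac{c}{ka_*}\psi_0$ with $c=\sum_l\int Q^3\xi_l$ fixed reduces each component to the homogeneous equation $L_0\eta_i=0$, and the decay estimate for bounded solutions puts $\eta_i$ into $H^1(\R^N)$ where $\ker L_0=\mathrm{span}\{\partial_1Q,\dots,\partial_NQ\}$; this yields the full characterization, including $\gamma_{i,0}=c/(ka_*)$ independent of $i$, in one stroke. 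The paper instead obtains $\gamma_{i,0}=\gamma_{l,0}$ by substituting the ansatz \eqref{06-10-25} back into \eqref{06-10-23}, which is fine but presupposes the ansatz. In short: same two key ingredients (non-degeneracy of $Q$ and the Pohozaev identity \eqref{06-13-1}), but your organization actually proves the "every solution has this form" direction that the paper's sketch leaves implicit. The one point worth keeping explicit, as you do, is the justification that $\xi_i\in H^1$ (via boundedness and comparison), since the lemma as stated does not impose this.
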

\begin{proof}
We set $\bar L(u):=-\Delta u(x)+\big(1-3Q^2(x)\big)u(x)$. Let $\Psi_j=(\psi_j,\cdots,\psi_j)$, it is obvious that $\bar  L (\Psi_j)=0$ and $\Psi_j$ is the solution of \eqref{06-10-23}, for $j=1,\cdots,N$.
Also, let $$\Phi= \big(Q+x\cdot\nabla Q,\cdots,Q+x\cdot\nabla Q\big),$$
then using \eqref{06-13-1}, we find that $\Phi$ is also the solution of \eqref{06-10-23}. And we know that $\Phi,\Psi_1,\cdots,\Psi_N$ are linearly independent. Then we get \eqref{06-10-25}.

Moreover putting \eqref{06-10-25} into \eqref{06-10-23}, we find
\begin{equation*}
-2Q(x)\gamma_{i,0}=-\frac{2}{k}Q(x)
\sum^k_{l=1}\gamma_{l,0},
\end{equation*}
which gives $\gamma_{i,0}=\gamma_{l,0}$ for all $i,l=1,\cdots,k$.
\end{proof}

\section{Calculations involving curvatures}
\renewcommand{\theequation}{B.\arabic{equation}}

\setcounter{equation}{0}

Now let $\Gamma\in C^2$ be a closed hypersurface in $\mathbb R^N$. For $y\in \Gamma$, let $\nu(y)$ and $T(y)$ denote respectively the outward unit  normal to $\Gamma$ at $y$ and the tangent hyperplane to $\Gamma$ at $y$. The curvatures of $\Gamma$ at a fixed point $y_0\in\Gamma$ are determined as follows. By a rotation of coordinates, we can assume that
$y_0=0$ and $\nu(0)$ is the $x_N$-direction, and $x_j$-direction is the $j$-th principal direction.

In some neighborhood $\mathcal{N}=\mathcal{N}(0)$ of $0$,   we have
\[
\Gamma=\bigl\{  x:   x_N=\varphi(x')\bigr\},
\]
where $x'=(x_1,\cdots,x_{N-1})$,
 \[
 \varphi(x') =\frac12 \sum_{j=1}^{N-1} \kappa_j x_j^2 +  O(|x'|^3),
 \]
 where
 $\kappa_j$, is  the $j$-th principal curvature of  $\Gamma$ at $0$.
 The Hessian matrix $[D^2 \varphi(0)]$  is given by
\begin{equation*}
[D^2 \varphi(0)]=diag [\kappa_1,\cdots,\kappa_{N-1}].
\end{equation*}

Suppose that  $W$ is a smooth function, such  that  $W(x)=a$  for all $x\in \Gamma$.

\begin{Lem}\label{lem-8-11-2}
We have
 \begin{equation}\label{7-19-29}
 \frac{\partial W(x)}{\partial x_l}\Bigr|_{x=0}=0, ~\mbox  ~l=1,\cdots,N-1,
 \end{equation}
  \begin{equation}\label{ab7-19-29}
  \frac{\partial^2 W(x)}{\partial x_m\partial x_l }\Bigr|_{x=0}=-\frac{\partial W\big(x\big)}{\partial x_N}\Bigr|_{x=0}\kappa_i \delta_{ml}, ~\mbox{for}~m, l=1,\cdots, N-1,
 \end{equation}
 where  $\kappa_1,\cdots,\kappa_{N-1}$, are  the principal curvatures of  $\Gamma$ at $0$.
\end{Lem}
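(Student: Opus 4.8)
The plan is to reduce both identities to the single observation that the restriction of $W$ to the graph representation of $\Gamma$ is constant, so that all of its tangential partial derivatives vanish; one then simply unwinds the chain rule at the origin, using $\nabla'\varphi(0)=0$ and the Hessian formula $[D^2\varphi(0)]=\mathrm{diag}[\kappa_1,\dots,\kappa_{N-1}]$.

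First I would set $g(x')=W\bigl(x',\varphi(x')\bigr)$ for $x'$ in the neighbourhood $\mathcal N$ where $\Gamma=\{x_N=\varphi(x')\}$. Since $(x',\varphi(x'))\in\Gamma$ and $W\equiv a$ on $\Gamma$, the function $g$ is identically equal to $a$, hence all its partial derivatives vanish near $0$. Differentiating once gives
\[
\partial_{x_l}g(x')=\frac{\partial W}{\partial x_l}+\frac{\partial W}{\partial x_N}\,\partial_{x_l}\varphi(x'),\qquad l=1,\dots,N-1,
\]
and evaluating at $x'=0$, where $\partial_{x_l}\varphi(0)=0$ by the expansion of $\varphi$, yields $\dfrac{\partial W}{\partial x_l}\Big|_{x=0}=0$, which is \eqref{7-19-29}.

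Next I would differentiate a second time: for $m,l\in\{1,\dots,N-1\}$,
\[
\partial_{x_m}\partial_{x_l}g=\frac{\partial^2 W}{\partial x_m\partial x_l}+\frac{\partial^2 W}{\partial x_m\partial x_N}\,\partial_{x_l}\varphi+\frac{\partial^2 W}{\partial x_N\partial x_l}\,\partial_{x_m}\varphi+\frac{\partial^2 W}{\partial x_N^2}\,\partial_{x_l}\varphi\,\partial_{x_m}\varphi+\frac{\partial W}{\partial x_N}\,\partial_{x_m}\partial_{x_l}\varphi.
\]
At $x'=0$ the three terms carrying a first derivative of $\varphi$ drop out because $\nabla'\varphi(0)=0$, while $\partial_{x_m}\partial_{x_l}\varphi(0)=\kappa_l\delta_{ml}$ from $[D^2\varphi(0)]=\mathrm{diag}[\kappa_1,\dots,\kappa_{N-1}]$. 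Since the left-hand side vanishes, this gives $\dfrac{\partial^2 W}{\partial x_m\partial x_l}\Big|_{x=0}=-\dfrac{\partial W}{\partial x_N}\Big|_{x=0}\,\kappa_l\delta_{ml}$, i.e. \eqref{ab7-19-29}.

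There is essentially no obstacle here: the argument is an elementary chain rule evaluated at a critical point of $\varphi$. The only points deserving a line of justification are that, after the stated rotation, $\nu(0)$ being the $x_N$-direction lets the implicit function theorem produce the $C^2$ graph $\varphi$ with $\varphi(0)=0$ and $\nabla'\varphi(0)=0$, and that $W$ is assumed constant on all of $\Gamma$ (not merely at isolated points), which is precisely what makes $g$ locally constant rather than only $g(0)=a$.
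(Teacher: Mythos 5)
Your proof is correct and is essentially identical to the paper's argument: both differentiate the identity $W\bigl(x',\varphi(x')\bigr)\equiv\text{const}$ once and twice via the chain rule and evaluate at $x'=0$ using $\nabla'\varphi(0)=0$ and $[D^2\varphi(0)]=\mathrm{diag}[\kappa_1,\dots,\kappa_{N-1}]$. Nothing further is needed.
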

\begin{proof}
First, we  have $
W\big(x',\varphi(x')\big)=0$.
And then we find
\begin{equation}\label{8-11-1}
 \frac{\partial W\big(x',\varphi(x')\big)}{\partial x_m}
 + \frac{\partial W\big(x',\varphi(x')\big)}{\partial x_N}\frac{\partial \varphi(x')}{\partial x_m} =0, ~\mbox{for}~m=1,\cdots,N-1.
 \end{equation}
Letting $x'=0$ in \eqref{8-11-1}, we obtain \eqref{7-19-29}.

Differentiating  \eqref{8-11-1} with respect to $x_l$ for $l=1,\cdots,N-1$, we get
 \begin{equation}\label{8-11-2}
 \begin{split}
 \frac{\partial^2  W\big(x',\varphi(x')\big)}{\partial x_m\partial x_l}&+\frac{\partial^2 W\big(x',\varphi(x')\big)}{\partial x_m\partial x_N}\frac{\partial \varphi(x')}{\partial x_l}
 + \frac{\partial W\big(x',\varphi(x')\big)}{\partial x_N}\frac{\partial^2 \varphi(x')}{\partial x_m x_l}
  \\&+ \Big(\frac{\partial^2 W \big(x',\varphi(x')\big)}{\partial x_N \partial x_l}+\frac{\partial^2 W\big(x',\varphi(x')\big)}{\partial x_N\partial x_N}\frac{\partial \varphi(x')}{\partial x_l}\Big) \frac{\partial \varphi(x')}{\partial x_m}=0.
 \end{split}
 \end{equation}
 Let $x'=0$ in \eqref{8-11-2}, then we get  \eqref{ab7-19-29}.
\end{proof}

\section{An example}
\renewcommand{\theequation}{C.\arabic{equation}}

\setcounter{equation}{0}
In this section, we use the above results to the following potential $V(x)$.
Let
\[
F_1(x)= \sum^N_{j=1}\frac{x_j^2}{a_j^2}-1,~F_2(x)= \sum^N_{j=1}\big(\frac{x_j}{a_j}-3\big)^2-1,
\]
where $a_j>0$, $a_j\ne a_l$ for $j\ne l$.   Let  $\Gamma_i$ is defined by  $F_i(x)=0$ with $i=1,2$.   Take
\begin{equation*}
V(x)=
\begin{cases}
F_1^2+1,~\mbox{in}~W_1,\\
F_2^2+1,~\mbox{in}~W_2,\\
\mbox{else},~\mbox{in}~\R^N\backslash \bigcup^2_{i=1}W_i.
\end{cases}
\end{equation*}
  and
$$W_i=\Big\{x\in\R^N; |F_i|\leq \delta_0\Big\},~\mbox{with some small fixed}~\delta_0>0~\mbox{and}~i=1,2.$$
\begin{Lem}
All critical points $\Delta V$ on $\Gamma_1$ are  $(\pm a_1,0,\cdots, 0), \cdots, (0,\cdots, 0, \pm a_N)$.
\end{Lem}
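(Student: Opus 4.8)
The plan is to compute $\Delta V$ explicitly along $\Gamma_1$ and then to classify the critical points by a Lagrange-multiplier argument that uses the pairwise distinctness of the semi-axes $a_1,\dots,a_N$. First I would observe that throughout the neighbourhood $W_1$ of $\Gamma_1$ we have $V=F_1^2+1$, hence $\nabla V = 2F_1\nabla F_1$ and
\begin{equation*}
\Delta V = 2|\nabla F_1|^2 + 2F_1\,\Delta F_1 .
\end{equation*}
Since $F_1\equiv 0$ on $\Gamma_1$, the term containing the (constant) Laplacian $\Delta F_1$ disappears there, and using $\nabla F_1(x)=\bigl(2x_1/a_1^2,\dots,2x_N/a_N^2\bigr)$ one gets
\begin{equation*}
\Delta V(x)\big|_{x\in\Gamma_1}=2|\nabla F_1(x)|^2=8\sum_{j=1}^N\frac{x_j^2}{a_j^4}.
\end{equation*}
So the problem reduces to finding the critical points, on the ellipsoid $\{F_1=0\}$, of the function $g(x):=\sum_{j=1}^N x_j^2/a_j^4$.

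Next I would invoke the Lagrange multiplier rule. Here it is important to read ``critical point of $\Delta V$ on $\Gamma_1$'' as a critical point of the restriction $\Delta V|_{\Gamma_1}$; since $\nabla F_1$ vanishes only at the origin, which does not lie on $\Gamma_1$, the set $\Gamma_1$ is a smooth hypersurface and at any critical point $x$ of $g|_{\Gamma_1}$ there exists $\lambda\in\R$ with $\nabla g(x)=\lambda\,\nabla F_1(x)$, that is,
\begin{equation*}
\frac{x_j}{a_j^4}=\lambda\,\frac{x_j}{a_j^2},\qquad j=1,\dots,N .
\end{equation*}
For each $j$ this forces $x_j=0$ or $\lambda=1/a_j^2$.

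The hypothesis $a_j\ne a_l$ enters exactly here: the numbers $1/a_1^2,\dots,1/a_N^2$ are pairwise distinct, so $\lambda=1/a_j^2$ can hold for at most one index $j$, whence at most one coordinate of $x$ is nonzero. Since $x\in\Gamma_1$, precisely one coordinate is nonzero, say the $j$-th, and it satisfies $x_j^2/a_j^2=1$, i.e. $x_j=\pm a_j$; this yields exactly the $2N$ points $(\pm a_1,0,\dots,0),\dots,(0,\dots,0,\pm a_N)$, and conversely each of them satisfies the Lagrange equations, so the list is complete. I do not anticipate any real obstacle in this argument — the computations are short and routine; the only points requiring a little care are recording the meaning of ``critical point on $\Gamma_1$'' so that the Lagrange rule applies, and noting that the constant $\Delta F_1$ is irrelevant on $\Gamma_1$ because it is multiplied by $F_1=0$.
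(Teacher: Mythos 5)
Your proof is correct and follows essentially the same route as the paper: compute $\Delta V=2F_1\Delta F_1+2|\nabla F_1|^2$ in $W_1$, apply the Lagrange multiplier rule on the ellipsoid, and use the pairwise distinctness of the $a_j$ to force all but one coordinate to vanish. The only cosmetic difference is that you discard the term $2F_1\Delta F_1$ before applying the multiplier rule (legitimately, since it vanishes identically on $\Gamma_1$ and hence does not affect the restricted critical points), whereas the paper keeps it and absorbs the resulting normal component into the multiplier $\lambda$; the conclusion is identical.
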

\begin{proof}
First, we find
\begin{equation*}
\nabla V(x) = 2 F \nabla F,~~\Delta V=  2 F \Delta F +2 |\nabla F|^2=  \Big(\sum^N_{l=1}\frac{x_l^2}{a_l^2}-1\Big) \sum_{l=1}^N\frac 4{a_i^2}+ \sum_{l=1}^N \frac{8x_l^2}{a_l^4}.
\end{equation*}
To find a critical point $\Delta V$ on $\Gamma$, we need to study the following equation
\[
\nabla (\Delta V) =\lambda  \nabla F,
\]
for some unknown constant $\lambda$.  That is,
\[
\frac{8x_l}{a_l^2} \sum_{k=1}^N \frac{1}{a_k^2} + \frac{16 x_i}{a_l^4} = \frac{2\lambda x_i}{a_l^2}, \quad l=1,\cdots, N.
\]
Thus, either $x_l=0$, or  $\lambda= 4 \displaystyle\sum_{k=1}^N \frac{1}{a_k^2} + \frac{8}{a_l^2}$.  If  $\lambda=4  \displaystyle\sum_{k=1}^N \frac{1}{a_k^2} + \frac{8}{a_l^2}$,
then $x_j=0$ for all $j\ne l$.  This shows that all critical points $\Delta V$ on $\Gamma$ are  $(\pm a_1,0,\cdots, 0), \cdots, (0,\cdots, 0, \pm a_N)$.
\end{proof}
Without loss of generality, we consider the point  $b_1=(0,\cdots, 0,  a_N)$. In this case,  $\tau_j$ is the $x_j$ direction, $j=1, \cdots, N-1$, and $\nu$
is the $x_N$ direction.
\begin{Lem}
If $a_l\ne a_j$ for $l\ne j$. Thus, $b_1$ is  non-degenerate on $\Gamma_1$.
\end{Lem}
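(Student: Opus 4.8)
The plan is to verify directly the two requirements in the definition of ``non-degenerate on $\Gamma_1$'' at the point $b_1=(0,\dots,0,a_N)$, using the rotation already fixed in the example (so that $\nu$ is the $x_N$-direction and the principal tangential vectors $\tau_j$, $j=1,\dots,N-1$, are the $x_1,\dots,x_{N-1}$-directions). The first requirement, $\frac{\partial^2 V(b_1)}{\partial\nu^2}\neq0$, is immediate: since $V=F_1^2+1$ in a neighbourhood of $\Gamma_1$ and $F_1(b_1)=0$, one has $\partial^2_{x_N}V(b_1)=2\big(\partial_{x_N}F_1(b_1)\big)^2=8/a_N^2>0$, which is just condition $(V)$ re-checked at $b_1$ and holds for any $a_j>0$. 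Likewise, from $\partial_{x_l}\Delta V(b_1)=0$ for $l<N$ one sees that $b_1$ satisfies the necessary condition \eqref{1.6}, so $b_1$ is indeed a candidate concentration point for Theorems~\ref{nth1.2} and \ref{nnth1.2}.

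For the second requirement I would compute the tangential Hessian $\big(\partial^2_{\tau_l\tau_j}\Delta V(b_1)\big)_{1\le l,j\le N-1}$ explicitly. Starting from the formula for $\Delta V$ established in the preceding lemma,
\[
\Delta V(x)=\Big(\sum_{m=1}^N\frac{x_m^2}{a_m^2}-1\Big)\sum_{m=1}^N\frac{4}{a_m^2}+\sum_{m=1}^N\frac{8x_m^2}{a_m^4},
\]
the ambient second derivatives at $b_1$ form the diagonal matrix $\partial^2_{x_lx_j}\Delta V(b_1)=\delta_{lj}\big(\tfrac{8}{a_l^2}\sum_m\tfrac1{a_m^2}+\tfrac{16}{a_l^4}\big)$, and $\partial_{x_N}\Delta V(b_1)=\tfrac{8}{a_N}\sum_m\tfrac1{a_m^2}+\tfrac{16}{a_N^3}$. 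Writing $\Gamma_1$ near $b_1$ as the graph $x_N=\varphi(x')$ with $\varphi(x')=a_N-\tfrac{a_N}{2}\sum_{m<N}\tfrac{x_m^2}{a_m^2}+O(|x'|^4)$, so that $\partial_{x_l}\varphi(0)=0$ and $\partial^2_{x_lx_j}\varphi(0)=-\tfrac{a_N}{a_l^2}\delta_{lj}$ (the principal curvatures at $b_1$ are $\kappa_l=-a_N/a_l^2$), and differentiating $\Delta V(x',\varphi(x'))$ twice, I obtain that the tangential Hessian at $b_1$ is the diagonal matrix with entries
\[
\partial^2_{x_lx_j}\Delta V(b_1)+\partial_{x_N}\Delta V(b_1)\,\partial^2_{x_lx_j}\varphi(0)=\delta_{lj}\,\frac{16}{a_l^2}\Big(\frac{1}{a_l^2}-\frac{1}{a_N^2}\Big),\qquad 1\le l,j\le N-1.
\]
Its determinant equals $\prod_{l=1}^{N-1}\tfrac{16}{a_l^2}\big(\tfrac1{a_l^2}-\tfrac1{a_N^2}\big)$, which is nonzero exactly because $a_l\neq a_N$ for every $l=1,\dots,N-1$, i.e. because the $a_j$ are pairwise distinct. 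Together with the first step this proves that $b_1$ is non-degenerate on $\Gamma_1$; by the symmetry of the configuration the same argument applies verbatim at every point $(0,\dots,0,\pm a_j)$, and since distinctness of the $a_j$ is exactly what makes the list of critical points of $\Delta V|_{\Gamma_1}$ in the previous lemma finite, each such point is an isolated critical point, so the hypotheses of Theorems~\ref{nth1.2}, \ref{nnth1.2} and \ref{th1.4} are all met.

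The computation is elementary; the one point that genuinely needs care is the bookkeeping of the curvature correction relating the ambient second derivatives of $\Delta V$ to the tangential Hessian along the ellipsoid $\Gamma_1$ (this is where Lemma~\ref{lem-8-11-2} and the graph expansion of $\varphi$ enter). If one dropped that term, the remaining ambient diagonal matrix $\mathrm{diag}\big(\tfrac{8}{a_l^2}\sum_m\tfrac1{a_m^2}+\tfrac{16}{a_l^4}\big)$ would be positive definite for any $a_j>0$ and the role of the distinctness hypothesis would disappear; it is precisely the curvature contribution $\partial_{x_N}\Delta V(b_1)\,\mathrm{diag}(\kappa_l)$ that produces the factor $\tfrac1{a_l^2}-\tfrac1{a_N^2}$ and hence forces $a_l\neq a_j$.
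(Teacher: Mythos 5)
Your proposal is correct and follows essentially the same route as the paper: both verify $\frac{\partial^2 V(b_1)}{\partial x_N^2}=\frac{8}{a_N^2}>0$ and then compute the tangential Hessian of $\Delta V$ restricted to $\Gamma_1$ at $b_1$, arriving at the identical diagonal matrix $\mathrm{diag}\bigl(\tfrac{16}{a_l^2}\bigl(\tfrac{1}{a_l^2}-\tfrac{1}{a_N^2}\bigr)\bigr)_{1\le l\le N-1}$, which is non-singular precisely because $a_l\ne a_N$. The only (cosmetic) difference is that the paper obtains this by substituting the constraint $x_N^2=a_N^2\bigl(1-\sum_{l<N}x_l^2/a_l^2\bigr)$ directly into $\Delta V|_{\Gamma_1}$ before differentiating, whereas you compute the ambient Hessian and then add the curvature correction $\partial_{x_N}\Delta V(b_1)\,\partial^2_{x_lx_j}\varphi(0)$ coming from the graph parametrization; the two computations agree term by term.
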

\begin{proof}
We have
\begin{equation*}
\frac{\partial^2 V(b_1)}{\partial x_N^2}=\frac{8}{a_N^2}>0,~~
\frac{\partial  \Delta V(b_1)}{\partial x_N}=\frac{8}{a_N} \sum_{l=1}^N\frac {1}{a_l^2}+ \frac{16}{a_N^3}>0.
\end{equation*}
On $\Gamma_1$, it holds
\begin{equation*}
\Delta V(x)=\sum_{l=1}^N \frac{8 x_l^2}{a_l^4}= 8\Bigl( \sum_{l=1}^{N-1} \frac{ x_l^2}{a_l^4}+\frac1{a_N^2}\Bigl( 1- \sum_{l=1}^{N-1} \frac{ x_l^2}{a_l^2}\Bigr)
\Bigr).
\end{equation*}
This gives
\begin{equation*}
\Big(\frac{\partial^2 \Delta V(b_1)}{\partial x_lx_j}\Big)_{1\leq l,j\leq N-1}=  \;diag~\Bigl(\frac{16}{a_1^2}\Big( \frac1{a_1^2}-\frac1{a_N^2}\Big),\cdots,\frac{16}{a_{N-1}^2}\Big( \frac1{a_{N-1}^2}-\frac1{a_N^2}\Big)\Bigr),
\end{equation*}
which is non-singular since $a_l\ne a_j$ for $l\ne j$. Thus, $b_1$ is also non-degenerate on $\Gamma_1$.
\end{proof}
\begin{Lem}
The  matrix
\begin{equation}\label{06-07-5}
\Big(\frac{\partial^2 \Delta V(b_1)}{\partial x_ix_j}\Big)_{1\leq i,j\leq N-1}+\frac{\partial  \Delta V(b_1)}{\partial x_N}  diag (\kappa_1, \cdots, \kappa_{N-1})
\end{equation}
is non singular if one of the following conditions holds£º

\smallskip

\textup{(1)} $a_N<a_l$, $l=1,\cdots, N-1$.

\smallskip

\textup{(2)} $a_N>a_l$, $l=1,\cdots, N-1$ and all the $a_i$ are close to a constant.
\end{Lem}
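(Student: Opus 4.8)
The plan is to reduce \eqref{06-07-5} to an explicit diagonal matrix and then check that none of its diagonal entries vanishes under either hypothesis. First I would compute the principal curvatures of $\Gamma_1$ at $b_1=(0,\dots,0,a_N)$. Translating the origin to $b_1$ and writing $\Gamma_1$ locally as a graph $x_N=\varphi(x')$ over its tangent hyperplane, the defining relation $\sum_{l=1}^{N-1}\frac{x_l^2}{a_l^2}+\frac{(x_N+a_N)^2}{a_N^2}=1$ gives $\varphi(x')=-\frac{a_N}{2}\sum_{l=1}^{N-1}\frac{x_l^2}{a_l^2}+O(|x'|^4)$; since the outward normal of $\Gamma_1$ at $b_1$ is the $x_N$-direction, the normalization of Appendix~B (under which $[D^2\varphi(0)]=diag\,[\kappa_1,\dots,\kappa_{N-1}]$) yields $\kappa_j=-\frac{a_N}{a_j^2}$ for $j=1,\dots,N-1$. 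Combining this with the quantities already obtained in the preceding lemma, namely $\big(\frac{\partial^2\Delta V(b_1)}{\partial x_l\partial x_j}\big)_{1\le l,j\le N-1}=diag\big(\frac{16}{a_l^2}(\frac1{a_l^2}-\frac1{a_N^2})\big)$ and $\frac{\partial\Delta V(b_1)}{\partial x_N}=\frac{8}{a_N}\sum_{l=1}^N\frac1{a_l^2}+\frac{16}{a_N^3}$ (at $b_1$ the normal direction $\nu$ is precisely $\partial/\partial x_N$), both summands in \eqref{06-07-5} are diagonal, and a one-line substitution shows that its $j$-th diagonal entry equals $\frac{8}{a_j^2}\beta_j$, where $\beta_j:=\frac{2}{a_j^2}-\frac{4}{a_N^2}-\sum_{l=1}^N\frac1{a_l^2}$. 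Hence \eqref{06-07-5} is non-singular precisely when $\beta_j\neq 0$ for every $j=1,\dots,N-1$.

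It then remains to inspect the sign of $\beta_j$. Since $\sum_{l=1}^N\frac1{a_l^2}\ge\frac1{a_j^2}+\frac1{a_N^2}$ for each $j\le N-1$, one always has $\beta_j\le\frac1{a_j^2}-\frac5{a_N^2}$; under hypothesis (1), $a_N<a_j$ forces $\frac1{a_j^2}<\frac1{a_N^2}$, whence $\beta_j<-\frac4{a_N^2}<0$. Under hypothesis (2), letting the $a_i$ tend to a common value $c>0$ one gets $\beta_j\to\frac2{c^2}-\frac4{c^2}-\frac{N}{c^2}=-\frac{N+2}{c^2}<0$, so $\beta_j<0$ as soon as all the $a_i$ are close enough to one another; the extra requirement $a_N>a_l$ in (2) is there to make $b_1$ a non-degenerate local minimum of $\Delta V$ on $\Gamma_1$ (its intrinsic Hessian $diag\big(\frac{16}{a_l^2}(\frac1{a_l^2}-\frac1{a_N^2})\big)$ then being positive definite), so that the non-degeneracy requirements in Theorems~\ref{nth1.2} and \ref{th1.4} are met at $b_1$. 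In either case every diagonal entry of \eqref{06-07-5} is strictly negative, and the lemma follows.

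I expect the only delicate point to be the sign of the curvatures: one must use the Appendix~B normalization, under which a round sphere oriented by its outward normal has negative principal curvatures, rather than the opposite convention. A quick consistency check confirms this is the intended sign, since with the opposite convention \eqref{06-07-5} would reduce to $diag\big(\frac{8}{a_j^2}(\frac2{a_j^2}+\sum_{l}\frac1{a_l^2})\big)$, which is positive definite with no hypotheses whatsoever and would render the lemma vacuous. Everything else is a direct substitution together with two elementary inequalities.
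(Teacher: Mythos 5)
Your proof is correct and follows essentially the same route as the paper's: compute the principal curvatures $\kappa_j=-a_N/a_j^2$ from the graph representation of $\Gamma_1$ near $b_1$, observe that both summands in \eqref{06-07-5} are diagonal, and check the sign of each diagonal entry (negative plus negative in case (1); a small perturbation of a strictly negative quantity in case (2)). Your version is slightly more quantitative, since collapsing the $j$-th entry to $\frac{8}{a_j^2}\beta_j$ with $\beta_j\le\frac{1}{a_j^2}-\frac{5}{a_N^2}$ gives an explicit negative bound, and your sign-convention sanity check on the curvatures is a worthwhile observation, but the underlying argument is the same.
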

\begin{proof}
Near $b_1=(0,\cdots, 0,  a_N)$, $\Gamma$ is given by
\[
x_N= a_N \sqrt{ 1-\sum_{l=1}^{N-1} \frac{x_l^2}{a_l^2}}=  a_N -\frac12 \sum_{l=1}^{N-1} \frac{ a_N x_l^2}{a_l^2}+ O(|x'|^3).
\]
Thus, $\kappa_l = -\frac{ a_N }{a_l^2},\quad l=1,\cdots, N-1$.
So we have
\begin{equation*}
\frac{\partial  \Delta V(b_1)}{\partial x_N} \kappa_j(b_1)=-\Bigl(\frac{8}{a_j^2} \sum_{l=1}^N\frac 1{a_l^2}+ \frac{16}{a_j^2a_N^2}\Bigr).
\end{equation*}

If  $x_0$ is a maximum point of $\Delta V$ on $\Gamma$, that is $a_N<a_l$, $l=1,\cdots, N-1$, then
$
\Big(\frac{\partial^2 \Delta V(x_0)}{\partial x_lx_j}\Big)_{1\leq l,j\leq N-1}$
is negative. Thus, \eqref{06-07-5}
is also a negative matrix.
On the other hand,  if  $b_1$ is a minimum point of $\Delta V$ on $\Gamma_1$
and all the $a_j$ are close to a constant, that is $a_N>a_l$, $l=1,\cdots, N-1$, then
 \eqref{06-07-5}
is negative.
\end{proof}

Finally, for $b_2=(0,\cdots,0,4a_N)\in \Gamma_2$, we have the similar results.



\begin{thebibliography}{32}
{\footnotesize
\bibitem{Ambrosetti}
A. Ambrosetti, M. Badiale, S. Cingolani,\emph{ Semiclassical states of nonlinear
Schr\"odinger equations.}  Arch. Rat. Mech. Anal.  {140} (1997), 285--300.

\bibitem{Anderson} M.  Anderson, J.  Ensher, M.  Matthews,
  C.  Wieman, E.  Cornell,   \emph{Observation of Bose-Einstein
    condensation in a dilute atomic vapor.}  Science {269} (1995),
  198--201.

\bibitem{Bao} W. Bao, Y. Cai, \emph{Mathmatical theory and numerical methods for Bose-Einstein condensation.}  Kinetic and Related Models {6} (2013), 1--135.

\bibitem{Bloch} I. Bloch, J. Dalibard, W. Zwerger,  \emph{Many-body physics with ultracold gases.} Rev. Mod.
Phys. {80} (2008), 885.

\bibitem{Byeon} J. Byeon, L. Jeanjean, \emph{Standing waves for nonlinear Schr\"odinger equations with a general nonlinearity.} Arch. Rat. Mech. Anal. {185} (2007), 185--200.

\bibitem{Cao3}
D. Cao, H. Heinz, \emph{Uniqueness of positive multi-lump bound states of nonlinear Schr\"odinger equations.} Math. Z. {243} (2003), 599--642.




\bibitem{Cao1} D. Cao, S. Li, P. Luo, \emph{Uniqueness of positive bound states with multi-bump for nonlinear Schr\"odinger equations.} Calc. Var. Partial Differential Equations  54 (2015), 4037--4063.


\bibitem{CNY}
D. Cao, E. Noussair, S. Yan, \emph{Existence and uniqueness results on
single-peaked solutions of a semilinear problem.} Ann. Inst. H.
Poincar\'e Anal. Non Lin\'eaire 15 (1998), 73--111.

\bibitem{Cao}
D. Cao, E. Noussair, S. Yan, \emph{Solutions with multiple peaks for nonlinear elliptic equations.} Proc. Royal Soc. Edinburgh 129A (1999), 235--264.

\bibitem{Cao4}
D. Cao, S. Peng, \emph{Semi-classical bound states for Schr\"odinger equations with potentials vanishing or unbounded at infinity.} Comm. Partial Differential Equations 34 (2009), 1566--1591.

\bibitem{Cornell}E. Cornell, C. Wieman, \emph{Nobel Lecture: Bose-Einstein condensation in a dilute gas,
the first 70 years and some recent experiments.} Rev. Mod. Phys., 74 (2002), 875--893.


\bibitem{DY10}
E. Dancer, S. Yan,  \emph{Interior and boundary peak solutions for a mixed boundary value problem.} Indiana Univ. Math. J. 48 (1999),  1177--1212.

\bibitem{DY11}
E. Dancer, S. Yan,  \emph{On the existence of multipeak solutions for nonlinear field equations on $\mathbb R^N$.} Discrete Contin. Dynam. Systems 6 (2000),  39--50.

\bibitem{Davis} K. Davis, M. Mewes, M.  Andrews,  N.  van Druten,  D.  Durfee, D.  Kurn, W. Ketterle, \emph{Bose-Einstein condensation in a gas of sodium atoms.} Phys. Rev.
Lett. 75 (1995), 3969--3973.

\bibitem{Deng}
Y. Deng, C. Lin, S. Yan, \emph{On the prescribed scalar curvature problem in $\R^N$, local uniqueness and periodicity.}  J. Math. Pures Appl. 104 (2015), 1013--1044.

\bibitem{Floer}
A. Floer, A. Weinstein, \emph{Nonspreading wave packets for the cubic Schr\"odinger equation with a bounded potential.} J. Funct. Anal. 69 (1986), 397--408.

\bibitem{G}
L.  Glangetas,  \emph{Uniqueness of positive solutions of a nonlinear elliptic equation involving the critical exponent.} Nonlinear Anal. 20  (1993), 571--603.

\bibitem{Gross} E. Gross, \emph{Structure of a quantized vortex in boson systems.} Nuovo Cimento 20 (1961),
454--466.


\bibitem{Grossi}
M. Grossi, \emph{On the number of single-peak solutions of the nonlinear Schr\"odinger equations.}
Ann. Inst. H. Poincar\'e Anal. Non Lin\'eaire  19 (2002), 261--280.

\bibitem{Grossi1} M. Grossi, A. Pistoia, \emph{Locating the peak of ground states of nonlinear Schr?dinger equations.} Houston J. Math.   31 (2005), 621--635.
\bibitem{GZ} H. Guo, H. Zhou, \emph{A constrained variational problem arising in attractive
Bose-Einstein condensate with ellipse-shaped potential.} Appl. Math. Letter 87 (2019), 35--41.



\bibitem{GLW}
Y. Guo, C. Lin, J. Wei, \emph{Local uniqueness and refined spike profiles of ground states for two-dimensional attractive Bose-Einstein condensates.} SIAM J. Math. Anal. 49  (2017), 3671--3715.


\bibitem{GS} Y. Guo, R. Seiringer,  \emph{On the mass concentration for Bose-Einstein condensates with attractive interactions.} Lett. Math. Phys. 104 (2014), 141--156.

\bibitem{Guo} Y. Guo, Z. Wang, X. Zeng, H. Zhou, \emph{Properties of ground states of attractive Gross-Pitaevskii equations with multi-well potentials.} Nonlinearity 31 (2018), 957--979.


\bibitem{Guo1} Y. Guo, X. Zeng, H. Zhou, \emph{Energy estimates and symmetry breaking
in attractive Bose-Einstein condensates with ring-shaped potentials.}  Ann. Inst. H.
Poincar\'e Anal. Non Lin\'eaire 33 (2016), 809--828.


\bibitem{GPY} Y.X. Guo, S.  Peng, S. Yan, \emph{Local uniqueness and periodicity induced by concentration.} Proc. Lond. Math. Soc. 114  (2017), 1005--1043.

\bibitem{Lieb1} E.  Lieb,  R. Seiringer,  J.  Solovej,  J. Yngvason, \emph{The mathematics of the Bose gas
and its condensation}. Oberwolfach Seminars 34. Birkhauser Verlag, Basel (2005).

\bibitem{Lieb2} E. Lieb,  R. Seiringer,   J. Yngvason, \emph{A rigorous derivation of the Gross-Pitaevskii
energy functional for a two-dimensional Bose gas.} Commun. Math. Phys. 224 (2001), 17--31.

\bibitem{Ketterle} W. Ketterle, \emph{Nobel lecture: When atoms behave as waves: Bose-Einstein condensation and
the atom laser.} Rev. Mod. Phys., 74 (2002), 1131-1151.

\bibitem{Pitaevskii} L. Pitaevskii, \emph{Vortex lines in an imperfect Bose gas.} Sov. Phys. JETP 13 (1961),  451--454.




\bibitem{Rabinowitz}
P. Rabinowitz, \emph{On a class of nonlinear Schr\"odinger equations.} Z. Angew. Math.
Phys. 43 (1992), 270--291.

}
\end{thebibliography}
\end{document}